\theoremstyle{definition}
\newtheorem{theorem}{Theorem}[section]
\newtheorem{proposition}{Proposition}[section]
\newtheorem{lemma}{Lemma}[section]
\newtheorem{corollary}{Corollary}[section]
\newtheorem{example}{Example}[section]
\newtheorem{definition}{Definition}[section]
\newtheorem{remark}{Remark}[section]
\newtheorem*{theoremA}{Theorem}
\numberwithin{equation}{section}
\begin{document}
\title[Quantized Vershik--Kerov Theory and Quantized Central measures]{Quantized Vershik--Kerov Theory and Quantized Central Measures on Branching Graphs}
\author[R. Sato]{Ryosuke SATO}
\address{Graduate School of Mathematics, Kyushu University, Fukuoka 819-0395, Japan}
\email{ma217052@math.kyushu-u.ac.jp}
\maketitle

\begin{abstract}
We propose a natural quantized character theory for inductive systems of compact quantum groups based on KMS states on AF-algebras following Stratila--Voiculescu's work \cite{StraVoic:book} (or \cite{EnomotoIzumi}), and give its serious investigation when the system consists of quantum unitary groups $U_q(N)$ with $q\in(0,1)$. The key features of this work are: The ``quantized trace" of a unitary representation of a compact quantum group can be understood as a quantized character associated with the unitary representation and its normalized one is captured as a KMS state with respect to a certain one-parameter automorphism group related to the so-called scaling group. In this paper we provide a Vershik--Kerov type approximation theorem for extremal quantized characters (called the ergodic method) and also compare our quantized character theory for the inductive system of $U_q(N)$ with Gorin's theory on $q$-Gelfand--Tsetlin graphs \cite{Gorin:qcentralmeasure}.
\end{abstract}

\allowdisplaybreaks{ 

\section{Introduction}
\subsection{Preface}
Voiculescu \cite{Voiculescu} initiated the study of extremal characters of the infinite-dimensional unitary group $U(\infty)=\varinjlim U(N)$, and then Vershik and Kerov \cite{VershikKerov2} proved based upon their so-called ergodic method, among other things, that Voiculescu's list of extremal characters is indeed complete (see also an independent work due to Boyer \cite{Boyer}). The (extremal) characters precisely correspond to a certain class of (extremal) tracial states on a certain AF-algebra, called the Stratila--Voiculescu AF-algebra (following Enomoto--Izumi \cite{EnomotoIzumi} for this name), canonically constructed from the inductive system of $U(N)$ (see \cite{StraVoic:book}). Thus the characters of $U(\infty)$ can be investigated within the framework of operator algebras completely. This fact is analogous to the well-known fact that any normalized character on a finite group $\Gamma$ can be captured as a normalized trace (tracial state) on the group algebra $\mathbb{C}[\Gamma]$, and hence the characters of $\Gamma$ can be investigated by using the group algebra $\mathbb{C}[\Gamma]$ rather than $\Gamma$ itself. In this context, the Stratila--Voiculescu AF-algebra associated with the inductive system of $U(N)$ should be regarded as a natural topological group algebra of $U(\infty)$. Moreover, Vershik and Kerov discovered that the above-mentioned class of tracial states (which precisely corresponds to the family of characters of $U(\infty)$) on the Stratila--Voiculescu AF-algebra precisely corresponds to the family of \emph{central probability measures} on the paths on the Gelfand--Tsetlin graph. By these correspondences among characters, tracial states and central probability measures, the characters of $U(\infty)$ can be analyzed in the framework of probability theory like Vershik--Kerov's ergodic method (\cite{VershikKerov2}) and Borodin--Olshanski's harmonic analysis (\cite{Olshanski}, \cite{BorodinOlshanski3} etc.). These works due to Vershik and Kerov, and Borodin and Olshanski form an important part of the asymptotic representation theory of $U(\infty)$.

On the other hand, Gorin \cite{Gorin:qcentralmeasure} introduced the concept of $q$-central probability measures on the paths on the Gelfand--Tsetlin graph and obtained the complete classification for extremal points of the simplex consisting of $q$-central probability measures. The concept of $q$-central probability measures comes from quantum traces (of irreducible representations) of the quantized universal enveloping algebra $\mathcal{U}_\epsilon(\mathfrak{gl}(N))$ as was remarked by Gorin himself in \cite[Remark 4]{Gorin:qcentralmeasure}, and it made him conjecture that the $q$-central probability measures are precisely associated with the representation theory of a certain quantum group. In this paper, we will give an answer to Gorin's conjecture along the line of the asymptotic representation theory of $U(\infty)$, especially Vershik and Kerov's work mentioned above. Namely, we will introduce the notion of quantized characters of inductive systems of compact quantum groups such as the inductive system of $U_q(N)$ and investigate them within the framework of operator algebras. Then we will prove that the quantized characters correspond to Gorin's $q$-central probability measures in an explicit way. In particular, we will give a representation-theoretic interpretation of the weights of finite paths on the Gelfand--Tsetlin graph introduced by Gorin \cite{Gorin:qcentralmeasure} to define the $q$-centrality; these weights come from weights of irreducible representations of $U_q(N)$. However there are two apparent difficulties in the attempt to give an answer to Gorin's conjecture. The first one is a formulation of the ``infinite-dimensional quantum unitary group $U_q(\infty)=\varinjlim U_q(N)$'' itself in the spirit of Woronowicz. The second is what should be an appropriate notion of ``quantized characters''. These two problems are really non-trivial at all, and indeed nobody has worked on this natural attempt. For the first difficulty, the quantum unitary group $U_q(N)$ in the sense of Woronowicz is defined as a certain pair of unital $C^*$-algebra and coproduct. Note that this unital $C^*$-algebra should be regarded as the ``continuous function algebra'' over $U_q(N)$. Thus, the inductive system of $U_q(N)$ is understood as the projective system of the $C^*$-algebras of $U_q(N)$, and the infinite-dimensional quantum unitary group $U_q(\infty)$ has already been constructed in terms of $\sigma$-$C^*$-algebras (see \cite{MahantaMathai}). However, as explained in the first paragraph, we need a suitable group algebra of $U_q(\infty)$, which should be like the Stratila--Voiculescu AF-algebra of $U(\infty)$ for our purpose. Hence we will construct the ``Stratila--Voiculescu AF-algebra'' directly from the inductive system of $U_q(N)$ rather than $U_q(\infty)$, and regard it as a suitable group algebra of $U_q(\infty)$. We leave it as a future work to establish the relationship (like the Hopf-Algebra duality) between the $U_q(\infty)$ itself and the Stratila--Voiculescu AF-algebra that we will construct from the inductive system of $U_q(N)$.

\subsection{Concept of quantized characters}
We will first discuss what should be an appropriate notion of ``quantized characters'' for compact quantum groups rather than inductive systems of them. Since characters of groups can be understood as traces on their group algebras, we should define a quantized character of a compact quantum group to be a certain linear functional on the group algebra that is obtained via the quantum group duality. However, it is known that $U(N)$ and $U_q(N)$ for example have the same ``representation theory''; namely, their group algebras must have the same structure. Thus, we cannot distinguish $U(N)$ and $U_q(N)$ in terms of their group algebras, and hence we need to find an additional notion to define quantized characters as linear functionals on group algebras. This is the heart of the problem here. Our idea is to focus on the quantum dimensions of unitary representations and the so-called scaling groups of compact quantum groups. The idea to focus on quantum dimensions is essentially the same idea as Gorin's one for his definition of $q$-central probability measures. On the other hand, the idea to use scaling groups is completely new and essential in this paper, though it is quite natural in view of Woronowicz's theory of quantum groups.

Let $G$ be a compact quantum group and $U$ a (finite-dimensional) irreducible unitary representation of $G$ (see Section \ref{CQGs} for the definition). As opposed to ordinary groups, the double contragredient representation $U^{cc}$ is not unitary in general, that is, two representations $U$ and $U^{cc}$ are not unitarily equivalent. However, there exists a unique positive invertible intertwiner $F_U$ from $U$ to $U^{cc}$ such that $\mathrm{Tr}(F_U)=\mathrm{Tr}(F_U^{-1})$, which is the quantum dimension of $U$. Then we can define the \emph{quantized character} assigned to $U$ to be the linear functional $\mathrm{Tr}(F_U\,\cdot\,)$, which can be regarded as a linear functional on the group algebra of $G$ precisely formulated in this paper. In order to formulate general quantized characters, we use the scaling group of $G$. The scaling group of $G$ plays a special role in Woronowicz's theory of quantum groups. Moreover, the scaling group of $G$ is determined by the intertwiners $F_U$ over all the irreducible unitary representations, which turns to be the trivial one at least for ordinary groups. Observe that the ``dual'' of the scaling group of $G$ can be considered as a one-parameter automorphism group of the group algebra of $G$ and characterizes the normalized quantized character $\mathrm{Tr}(F_U\,\cdot\,)/\mathrm{Tr}(F_U)$ assigned to an irreducible unitary representation $U$ as an extremal KMS state for this dual of the scaling group of $G$. In this way, we can define quantized characters of $G$ in terms of the dual of the scaling group of $G$. For our purpose, we will generalize this idea to the setting of inductive systems of compact quantum groups based on their Stratila--Voiculescu AF-algebras.

\subsection{Main results and connections with other works}
In this paper, we will consider rather general inductive systems of compact quantum groups, but our main example is the inductive system of $U_q(N)$. For the inductive system of $U_q(N)$, a part of what we will obtain in this paper can be summarized as follows.

\begin{theoremA}\label{TheoremA}
There exists an explicit affine homeomorphism between the simplex of quantized characters of the inductive system of $U_q(N)$ and the simplex of Gorin's $q^2$-central probability measures on the paths on the Gelfand --Tsetlin graph.
\end{theoremA}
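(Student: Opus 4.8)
The plan is to realize both simplices as two incarnations of one and the same object --- coherent families of finite-level data on the Gelfand--Tsetlin graph --- and to construct the homeomorphism as the passage between these incarnations. First I would invoke the Stratila--Voiculescu AF-algebra $A$ attached to the inductive system $\{U_q(N)\}$, together with the one-parameter automorphism group $\{\sigma_t\}$ dual to the scaling group, so that by definition a quantized character is a $\sigma$-KMS state on $A$. Since $A$ is the inductive limit of finite-dimensional $C^*$-algebras $A_N$ whose centers are indexed by the vertices (signatures) of the $N$-th level of the graph, any state on $A$ restricts to a coherent family of states on the matrix blocks of the $A_N$, and conversely a coherent family reconstructs a state. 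The first substantive step is thus to show that the KMS condition pins down, on each block labelled by an irreducible representation $U$, the Gibbs-type weighting by the positive intertwiner $F_U$: the state must be proportional to $\mathrm{Tr}(F_U\,\cdot\,)$ on that block, with the proportionality constants forming a probability weight across the vertices of each level.

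Next I would translate the compatibility between consecutive levels into a recursion on these weights. Branching from level $N+1$ to level $N$ is governed by the restriction rule for $U_q(N)\subset U_q(N+1)$, which coincides combinatorially with the classical interlacing rule, so the edge set of the graph is exactly Gorin's. The coherence relation forced by the KMS condition then says precisely that the weight attached to a vertex at level $N$ is obtained from the weights at level $N+1$ by summing against ratios of quantum dimensions along the edges. This is exactly the defining relation of a $q^2$-central probability measure, once one identifies the edge weights: the eigenvalues of the Woronowicz character $F_U$ are integer powers of $q^2$ (not of $q$), so the multiplicative weight that Gorin assigns to a finite path is recovered as the product of the corresponding $F$-eigenvalue ratios, and Gorin's deformation parameter is matched with $q^2$.

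With this dictionary in place, the explicit map $\Phi$ sends a quantized character $\phi$ to the measure $M_\phi$ on the path space whose finite-dimensional distribution at level $N$ is the probability weight produced above; conversely a $q^2$-central measure yields a coherent KMS family and hence a quantized character. Affinity of $\Phi$ is immediate, as convex combinations of states correspond to convex combinations of finite-dimensional distributions. Injectivity follows because a KMS state on $A$ is determined by its restrictions to the $A_N$, and surjectivity from the reconstruction just described. Finally, the path space is a projective limit of finite sets, hence compact and metrizable, so both the weak-$*$ topology on states and the weak topology on measures are induced by the finite-level data; $\Phi$ therefore intertwines them and is a homeomorphism of the two Choquet simplices.

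The main obstacle I anticipate is the middle step: proving rigorously that the abstract KMS condition with respect to the dual scaling automorphism is equivalent to Gorin's concrete centrality relation, and in particular carrying out the bookkeeping that turns the Woronowicz parameter $q$ into Gorin's $q^2$. This demands an explicit description of the action of $\{\sigma_t\}$ on the matrix units of each $A_N$ in terms of the eigenvalues of $F_U$, together with a careful check that the resulting Gibbs weights reproduce Gorin's path weights on the nose rather than up to an unwanted reparametrization.
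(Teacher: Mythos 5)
Your proposal follows essentially the same route as the paper: quantized characters are defined as KMS states for the dual scaling flow on the Stratila--Voiculescu AF-algebra, the KMS condition forces the Gibbs weighting $\mathrm{Tr}(F_\pi\,\cdot\,)/\dim_q(\pi)$ on each block (the paper's Lemma \ref{Lemma:decom}), the level-to-level compatibility becomes the coherence relation \eqref{CoherentRelation} via the branching formula \eqref{Eq:Density} for the density matrices, and the explicit Gelfand--Tsetlin computation $[F_\nu]_{tt}=q^{-(N-1)|\nu|+2\sum_n|r(t_n)|}$ turns the weight ratios within a block into powers of $q^2$, which is exactly where Gorin's parameter shift appears (Theorem \ref{main}). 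Two small corrections to your write-up: the algebras $C^*(U_q(N))$ are \emph{not} finite-dimensional (they are $c_0$-sums over the countably infinite set $\mathrm{Sign}_N$), which is why the paper must impose the extra normalization that $\chi|_{C^*(G_N)}$ has norm $1$ to guarantee that the block weights form a genuine probability measure at each level; and the path space is a projective limit of \emph{countable} (not finite) sets, so neither it nor the simplex of quantized characters is compact --- the homeomorphism is instead established directly by matching finite-level marginals with the weak$^*$ topology, as in Propositions \ref{P6.1} and \ref{P6.2}.
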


This theorem gives a representation-theoretic interpretation of Gorin's work on the $q$-Gelfand--Tsetlin graph (modulo the change of parameter $q\mapsto q^2$). See Theorem \ref{main} for more details. Moreover, we will give a representation-theoretic interpretation of generating functions of probability measures on the sets of signatures introduced by Gorin \cite{Gorin:qcentralmeasure} to analyze $q$-central probability measures. Namely, Theorem \ref{Theorem:rep_qcharacters} asserts that these generating functions coincide with the restrictions of quantized characters to the maximal tori of $U_q(N)$. In this way, we will find a quantum group analogue of the so-called Voiculescu functions (\cite{Voiculescu}). We will also establish the dynamical interpretation of $q$-central probability measures along the line of Vershik--Kerov's idea and construct the GNS representation of the Stratila--Voiculescu AF algebra associated with a given quantized character of the inductive system of $U_q(N)$ by using the corresponding $q$-central probability measure and its dynamical interpretation. Concerning the harmonic analysis for the inductive system of $U_q(N)$, we would like to mention that the above theorem and Gorin's work \cite{Gorin:qcentralmeasure} together completely classify all the extremal quantized characters of the inductive system of $U_q(N)$, see Appendix A. On the other hand, the decomposition problem of a given quantized character into extremal ones remains as a future work, and it is also a future interesting problem to find any possible relationship between the present work and the quantized harmonic analysis due to Gorin and Olshanski \cite{GorinOlshanski}.

\subsection{Organization of the paper}
This paper consists of two parts. The first one (Section 2) is a general theory of quantized characters. Firstly, we review some basic definitions and some properties of compact quantum groups and fix some notations used throughout this paper. In Section 2.2, we deal with the duals of compact quantum groups in order to formulate their group ($C^*$ or von Neumann) algebras. In Section 2.3, we discuss compact quantum groups and their quantum subgroups (remark that pairs of compact quantum groups and quantum subgroups are basic building blocks of inductive systems of compact quantum groups). The purpose of this section is an explicit intertwining formula (see Proposition \ref{P4.2} and Equation \eqref{Eq:Density}) of the duals of scaling groups of a compact quantum group and its quantum subgroup. In Section 2.4, we deal with inductive systems of compact quantum groups and construct their Stratila--Voiculescu AF algebras (following Enomoto--Izumi \cite{EnomotoIzumi}, but in a bit different way from theirs). Moreover, we introduce a canonical flow on the Stratila--Voiculescu AF-algebra associated with a given inductive system of compact quantum groups, which is essentially obtained as the inductive limit of the duals of scaling groups. Then a quantized character of the given inductive system of compact quantum groups is defined in terms of this flow, or more precisely, as a certain KMS state with respect to this flow. Section 2.5 devotes to the ergodic method for quantized characters. Moreover, we investigate a natural dynamical system on the paths on the branching graph of a given inductive system of compact quantum groups, and obtain that the extremity for $q$-central probability measures coincides with the ergodicity for $q$-central probability measures when the given compact quantum groups are $U_q(N)$ (actually a more general assertion is given). In the second part of this paper (Section 3) we study the inductive system of $U_q(N)$. We calculate some representation-theoretic quantities (for instance, Equation \eqref{density} and \eqref{Def:weight}) by using the Gelfand--Tsetlin basis. Moreover, we obtain the main theorem (see Theorem \ref{main}) by using these quantities. Finally, we give a representation-theoretic interpretation of Gorin's generating functions of probability measures on the set of signatures. 

There are two appendices. In Appendix A, we briefly explain Gorin's boundary theorem (i.e., the complete parametrization of extremal quantized characters of the inductive system of $U_q(N)$, see \cite{Gorin:qcentralmeasure}) as a corollary of the ergodic method in Section 2. In Appendix B, we briefly touch Cuenca's recent work \cite{Cuenca} on $(q,t)$-central probability measures on the paths of the Gelfand--Tsetlin graph from our point of view.

\section{General theory}
\subsection{Compact quantum groups}\label{CQGs} 

We recall some basic definitions and some properties on compact quantum groups to fix some notations used throughout this paper. A typical example is the quantum unitary group $U_q(N)$, which will be discussed in Section 3. 

\medskip
Let $G=(A,\delta)$ be a pair of a unital $C^*$-algebra and a unital $*$-homomorphism $\delta\colon A\to A\otimes A$ (called the comultiplication), where $\otimes$ denotes the minimal (or spatial) tensor product. The pair $G=(A,\delta)$ is called a \emph{$C^*$-algebraic compact quantum group} (CQG) if it satisfies the following two conditions:
\begin{itemize}
\item (coassociativity) $(\mathrm{id}\otimes\delta)\delta=(\delta\otimes\mathrm{id})\delta$ as homomorphisms from $A$ to $A\otimes A\otimes A$,
\item (cancellation property) the spaces $\delta(A)(A\otimes 1_A)$ and $\delta(A)(1_A\otimes A)$ is dense in $A\otimes A$.
\end{itemize}

For two given $f_1,f_2\in A^*$ we define $f_1*f_2:=(f_1\otimes f_2)\circ\delta\in A^*.$ It is well known (see e.g.\, \cite[Theorem 1.2.1]{Neshveyev}) that any CQG $G=(A,\delta)$ has the so-called \emph{Haar state} $h:A\to\mathbb{C}$, which enjoys that $f*h=h*f=f(1_A)h$ for any linear functional $f\in A^*$, or equivalently $(\mathrm{id}\otimes h)\delta(a)=(h\otimes\mathrm{id})\delta(a)=h(a)1_A$ for any $a\in A$.

\medskip
Let $G=(A,\delta)$ be a CQG and $\mathcal{V}$ be a finite-dimensional vector space. An invertible element $U\in B(\mathcal{V})\otimes A$ is called a \emph{representation} of $G$ if it satisfies $(\mathrm{id}\otimes\delta)(U)=U_{12}U_{13}$ in $B(\mathcal{V})\otimes A\otimes A,$ where $U_{12}$, $U_{13}$, etc are leg numbering notations, see \cite[Section 1.3]{Neshveyev}. Let $e_{ij}$ be a matrix unit system of $B(\mathcal{V}).$ An invertible element $U=\sum_{i,j=1}^{\dim\mathcal{V}}e_{ij}\otimes u_{ij}\in B(\mathcal{V})\otimes A$ is a representation of $G$ if and only if $\delta(u_{ij})=\sum_{k=1}^{\dim\mathcal{V}}u_{ik}\otimes u_{kj}$ holds for any $i,j=1,\dots,\dim\mathcal{V}$. The element $u_{ij}$ is called the \emph{matrix coefficient} of $U$ with respect to the matrix unit system $e_{ij}$ (or the basis of $\mathcal{V}$). The dimension of $\mathcal{V}$ is denoted by $\dim(U)$ (or $\dim(\pi)$ when $U = U_\pi$) and called the \emph{dimension} of the representation $U$. If $\mathcal{V}$ is a Hilbert space, that is, $\mathcal{V}$ is equipped with an inner product and $U$ is a unitary, then $U$ is called a \emph{unitary representation} of $G$. For any two finite-dimensional representations $U,V$ on vector spaces $\mathcal{V}_U,\mathcal{V}_V$, respectively, a linear map $T\colon\mathcal{V}_U\to\mathcal{V}_V$ is an \emph{intertwiner} from $U$ to $V$ if $(T\otimes\mathrm{id})U=V(T\otimes\mathrm{id})$ holds. We denote by $\mathrm{Mor}(U,V)$ the intertwiners from $U$ to $V$. If there exists a bijective intertwiner from $U$ to $V$ (or from $V$ to $U$), then $U$ and $V$ are said to be \emph{equivalent}, moreover if these representations are unitary and there exists a unitary intertwiner then they are said to be \emph{unitarly equivalent}, we write $U\sim V$ in the case. We denote by $\widehat{G}$ all the equivalence classes of irreducible unitary representations, and call $\widehat{G}$ the \emph{unitary dual} of $G$. Let $U$ be a finite-dimensional representation on a Hilbert space $\mathcal{H}$. Let $J\colon\mathcal{H}\to\mathcal{H}^*$ be a conjugate linear map sending a basis to its dual basis and $j\colon B(\mathcal{H})\ni a\mapsto Ja^*J^{-1}\in B(\mathcal{H}^*)$. Then a new representation $U^c:=(j\otimes\mathrm{id})U^{-1}\in B(\mathcal{H}^*)\otimes A$ is called the \emph{contragredient representation} of $U$. It is well known that, for any finite-dimensional unitary representation $U$, there exists a unique positive invertible intertwiner $F_U\in\mathrm{Mor}(U,U^{cc})$ such that $\mathrm{Tr}(F_U\,\cdot\,)=\mathrm{Tr}(F_U^{-1}\,\cdot\,)$ on $\mathrm{End}(U):=\mathrm{Mor}(U,U)$. The trace $\mathrm{Tr}(F_U)$ is called the \emph{quantum dimension} of the representation $U$, denoted by $\dim_q(U)$. In this paper, we call the matrix $F_U$ the \emph{density matrix} of the representation $U$. In what follows, we write $\mathcal{H}_\pi := \mathcal{H}_{U_\pi}$, $\dim(\pi):=\dim(\mathcal{H}_\pi),$ $F_\pi:= F_{U_\pi}$ and $\dim_q(\pi) := \dim_q(U_\pi)$ when given representations $U_\pi$ have suffix $\pi$.  For a given unitary representation $U$, if $\mathrm{End}(U)$ is $1$-dimensional, then $U$ is said to be \emph{irreducible}. It is well known that every finite dimensional representation of a CQG becomes a direct sum of irreducible ones, see \cite[Theorem 1.3.7]{Neshveyev}. 

\medskip
Let $\mathcal{A}\subset A$ be the subspace generated by matrix coefficients of finite-dimensional representations. By the definition of tensor product representations and contragredient representations, the subspace $\mathcal{A}$ becomes a $*$-subalgebra of $A$, see \cite[section 1.6]{Neshveyev}. Conversely, we always assume that $A$ is the universal $C^*$-algebra generated by $\mathcal{A}$. For any $z\in\mathbb{C}$, we have a linear functional $f_z^G\colon\mathcal{A}\to\mathbb{C}$ determined by $(\mathrm{id}\otimes f_z^G)(U)=F_U^z$ for any finite-dimensional unitary representation $U$. See \cite[section 1.7]{Neshveyev}. These functionals $\{f^G_z\}_{z\in\mathbb{C}}$ are called the \emph{Woronowicz characters} of the CQG $G$. By definition, we have $F_U^z=[f_z^G(u_{ij})]_{ij=1}^{\dim(U)}$, where $u_{ij}$ are matrix coefficients of the representation $U$. For any $a\in A$ and $f,g\in \mathcal{A}^*$, we define $f*a:=(\mathrm{id}\otimes f)\delta(a)$, $a*f=(f\otimes\mathrm{id})\delta(a)$ and $f*a*g:=f*(a*g)=(f*a)*g$. The Woronowicz characters $\{f_z^G\}_z$ induce two actions $\sigma^G,\tau^G : \mathbb{C} \curvearrowright \mathcal{A}$ of the whole complex field $\mathbb{C}$ on $\mathcal{A}$ defined by $\sigma_z^G(a):=f_{\sqrt{-1}z}^G*a*f_{\sqrt{-1}z}^G$ and $\tau_z^G(a):=f_{-\sqrt{-1}z}^G*a*f_{\sqrt{-1}z}^G$  for any $a\in\mathcal{A}$, and they are called the \emph{modular group} (or \emph{modular action}) and the \emph{scaling group} (or \emph{scaling action}), respectively. Note that 
\begin{equation}\label{Eq:Modular_Scaling_Action}
(\mathrm{id}\otimes\sigma_z^G)(U)=(F_U^{\sqrt{-1}z}\otimes 1_A)U(F_U^{\sqrt{-1}z}\otimes 1_A), \quad
(\mathrm{id}\otimes\tau_z^G)(U)=(F_U^{\sqrt{-1}z}\otimes 1_A)U(F_U^{-\sqrt{-1}z}\otimes 1_A)
\end{equation} 
for any finite-dimensional unitary representation $U$. We remark that the Haar state $h$ on $\mathcal{A}$ is $\sigma^G_z$-invariant for all $z\in\mathbb{C}$ and satisfies $h(ab)=h(b\sigma_{-\sqrt{-1}}(a))$ for all $a,b\in\mathcal{A}$, see \cite[Theorem 1.7.3]{Neshveyev}. This fact is the reason why $\sigma_z$ is called the modular group.

\medskip
In closing of this section we recall the notion of quantum subgroups. Let $G=(A,\delta_G)$ and $H=(B,\delta_H)$ be CQGs. The CQG $H$ is a \emph{quantum subgroup} of the CQG $G$ if there exists a surjective $*$-homomorphism $\theta\colon A\to B$ which satisfies $\delta_H\theta=(\theta\otimes\theta)\delta_G\colon A\to B\otimes B$. Since $\theta$ is a $*$-homomorphism, we have the following: 
For any unitary representation $U$ of $G$, it is easy to see that $(\mathrm{id}\otimes\theta)(U)$ is a unitary representation of the quantum subgroup $H$, and this is called the restriction of $U$ to $H$. When the $U$ has a suffix $\pi$, that is, $U = U_\pi$, we denote the restriction of $U_\pi$ to $H$ by $\pi|_H$ and write $U_{\pi|_H} = (\mathrm{id}\otimes\theta)(U_\pi)$.

\subsection{Duals of compact quantum groups}
Let $G=(A,\delta)$ be a CQG and $\mathcal{A}$ the $*$-subalgebra generated by the matrix coefficients of all finite-dimensional representations of $G$. It is known that $\mathcal{A}$ becomes a Hopf $*$-algebra with comultiplication $\delta|_\mathcal{A}$, counit $\epsilon$ and antipode $S$ determined by $\epsilon(u_{ij})=\delta_{i,j}$, $S(u_{ij})=u_{ji}^*$ for matrix coefficients $u_{ij}$ of every finite-dimensional representation $U$. See \cite[Theorem 1.6.4]{Neshveyev}. 

\medskip
The dual $\mathcal{U}(G) := \mathcal{A}^*$ as a linear space becomes a $*$-algebra with multiplication \[\mathcal{U}(G)\times\mathcal{U}(G) \ni (f_1,f_2) \mapsto f_1 * f_2 := (f_1\otimes f_2)\circ\delta \in \mathcal{U}(G)\] and involution $\mathcal{U}(G) \ni f \mapsto f^* \in \mathcal{U}(G)$ defined to be $f^*(a) := \overline{f(S(a^*))}$, $a \in \mathcal{A}$. In what follows, we choose and fix a complete family, say $\{U_\pi\}_{\pi \in \widehat{G}}$, of representatives of members of $\widehat{G}$. It is known, see e.g.\ \cite[Section 1.6]{Neshveyev}, that for each $\pi\in\widehat{G}$ the mapping $f\in\mathcal{U}(G)\mapsto U_\pi(f):=(\mathrm{id}\otimes f)(U_\pi)\in B(\mathcal{H_\pi})$ defines a surjective $*$-homomorphism and moreover that \[U_{\widehat{G}}\colon \mathcal{U}(G)\ni f\mapsto U_{\widehat{G}}(f):=(U_\pi(f))_{\pi\in\widehat{G}}\in\prod_{\pi\in\widehat{G}}B(\mathcal{H}_\pi)\] becomes a bijective $*$-homomorphism. The surjectivity and the bijectivity of the mappings $f\mapsto U_\pi(f)$ and $f\mapsto U_{\widehat{G}(f)}$, respectively, are bit non-trivial and follow from the following consideration: For each $\pi\in\widehat{G},$ we set \[a_{ij}(\pi):=\dim_q(\pi)\sum_{p=1}^{\dim(\pi)}f_{-1}(u_{jp}(\pi))u_{ip}(\pi)^*\] with $U_\pi=\sum_{i,j=1}^{\dim(\pi)}e_{ij}(\pi)\otimes u_{ij}(\pi)$. Then we have \[U_\pi(a_{ij}(\pi)h)=e_{ij}(\pi)\] by the orthogonality relations for matrix coefficients, where $[a_{ij}(\pi)h](\,\cdot\,):=h(\,\cdot\,\ a_{ij}(\pi))$. See \cite[Theorem 1.4.3]{Neshveyev}. It follows that $f\mapsto U_\pi(f)$ and $f\mapsto U_{\widehat{G}(f)}$ are surjective and moreover that 
\begin{equation}\label{Eq3.1}
U_{\widehat{G}}^{-1}\Bigg(\Big(\sum_{i,j=1}^{\dim(\pi)}\alpha_{ij}(\pi)e_{ij}(\pi)\Big)_{\pi\in\widehat{G}}\Bigg)=\sum_{\pi\in\widehat{G}}\sum_{i,j=1}^{\dim(\pi)}\alpha_{ij}(\pi)a_{ij}(\pi)h,
\end{equation}
whose right-hand side involves an infinite sum over $\pi\in\widehat{G},$ but it is indeed a well-defined linear functional on $\mathcal{A},$ because 
\[
[a_{ij}(\pi)h](u_{kl}(\rho))=h(u_{kl}(\rho)a_{ij}(\pi))=\delta_{\pi,\rho}\delta_{i,k}\delta_{j,l},
\]
that is, the $a_{ij}(\pi)h$ form a dual basis of the $u_{ij}(\pi)$. 

\medskip
Here is a simple (probably well-known) lemma, which immediately follows from \eqref{Eq:Modular_Scaling_Action}.

\begin{lemma}\label{lemma:dualscalingaction_functional}
For every $\pi\in\widehat{G}$ we have \[U_\pi(f\circ\tau^G_t)=F^{\sqrt{-1}t}_\pi U_\pi(f)F^{-\sqrt{-1}t}_\pi,\quad f\in\mathcal{U}(G), \ t\in\mathbb{R}.\] Therefore, the dual scaling group $\widehat{\tau}^G:\mathbb{R}\curvearrowright\mathcal{U}(G)$ defined by $\widehat{\tau}_t^G(f):=f\circ\tau_t^G$ for every $f\in\mathcal{U}(G)$ and $t\in\mathbb{R}$ enjoys the formula \[U_{\widehat{G}}(\widehat{\tau}_t^G(f))=\Big(F^{\sqrt{-1}t}_\pi U_\pi(f)F^{-\sqrt{-1}t}_\pi\Big)_{\pi\in\widehat{G},}\quad f\in\mathcal{U}(G),\ t\in\mathbb{R},\] and  $\hat{\tau}^G_t(f) = f^G_{\sqrt{-1}t}*f*f^G_{-\sqrt{-1}t}$ holds for every $f \in \mathcal{U}(G)$ and $t \in \mathbb{R}$. 
\end{lemma}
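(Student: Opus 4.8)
The plan is to obtain the first identity by applying the slice map $(\mathrm{id}\otimes f)$ to Equation \eqref{Eq:Modular_Scaling_Action} and then to read off the remaining two assertions from the definitions. First I would fix $\pi\in\widehat{G}$ and specialize \eqref{Eq:Modular_Scaling_Action} to $U=U_\pi$ and $z=t\in\mathbb{R}$, so that
\[
(\mathrm{id}\otimes\tau_t^G)(U_\pi)=(F_\pi^{\sqrt{-1}t}\otimes 1_A)\,U_\pi\,(F_\pi^{-\sqrt{-1}t}\otimes 1_A).
\]
Applying $(\mathrm{id}\otimes f)$ to both sides and using that $f$ acts only on the second ($A$-)leg while the operators $F_\pi^{\pm\sqrt{-1}t}\otimes 1_A$ act only on the first ($B(\mathcal{H}_\pi)$-)leg, the right-hand side becomes $F_\pi^{\sqrt{-1}t}\,(\mathrm{id}\otimes f)(U_\pi)\,F_\pi^{-\sqrt{-1}t}=F_\pi^{\sqrt{-1}t}\,U_\pi(f)\,F_\pi^{-\sqrt{-1}t}$, whereas the left-hand side is $(\mathrm{id}\otimes(f\circ\tau_t^G))(U_\pi)=U_\pi(f\circ\tau_t^G)$, since $(\mathrm{id}\otimes f)\circ(\mathrm{id}\otimes\tau_t^G)=\mathrm{id}\otimes(f\circ\tau_t^G)$. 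This yields the displayed formula for $U_\pi(f\circ\tau_t^G)$.

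The formula for $U_{\widehat{G}}(\widehat{\tau}_t^G(f))$ is then immediate: by the very definition $U_{\widehat{G}}(g)=(U_\pi(g))_{\pi\in\widehat{G}}$ together with $\widehat{\tau}_t^G(f)=f\circ\tau_t^G$, so componentwise substitution of the first identity gives the claimed tuple. For the last identity I would unfold the module-action definition $\tau_t^G(a)=f_{-\sqrt{-1}t}^G*a*f_{\sqrt{-1}t}^G$ using Sweedler notation. Writing the iterated coproduct as $\sum a_{(1)}\otimes a_{(2)}\otimes a_{(3)}$ (coassociativity makes the bracketing irrelevant), the rules $f*a=(\mathrm{id}\otimes f)\delta(a)$ and $a*f=(f\otimes\mathrm{id})\delta(a)$ give $\tau_t^G(a)=\sum f_{\sqrt{-1}t}^G(a_{(1)})\,a_{(2)}\,f_{-\sqrt{-1}t}^G(a_{(3)})$. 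Pairing with $f$ then yields $(f\circ\tau_t^G)(a)=\sum f_{\sqrt{-1}t}^G(a_{(1)})\,f(a_{(2)})\,f_{-\sqrt{-1}t}^G(a_{(3)})=(f_{\sqrt{-1}t}^G\otimes f\otimes f_{-\sqrt{-1}t}^G)\big(\delta^{(2)}(a)\big)$, which is exactly $\big(f_{\sqrt{-1}t}^G*f*f_{-\sqrt{-1}t}^G\big)(a)$ by the definition of the convolution product on $\mathcal{U}(G)$.

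There is no conceptual obstacle here; the only thing that demands care is the bookkeeping of the conventions. Specifically, the module-action definition places $f_{-\sqrt{-1}t}^G$ to the \emph{left} of $a$, yet in the resulting Sweedler expansion it pairs with the last leg $a_{(3)}$ and hence sits on the \emph{right} of the convolution $f_{\sqrt{-1}t}^G*f*f_{-\sqrt{-1}t}^G$, and symmetrically for $f_{\sqrt{-1}t}^G$; this order reversal is exactly what makes the exponents consistent with the conjugation $F_\pi^{\sqrt{-1}t}(\,\cdot\,)F_\pi^{-\sqrt{-1}t}$ coming from \eqref{Eq:Modular_Scaling_Action}. I would verify these index placements once, after which all three assertions follow from the direct substitutions above.
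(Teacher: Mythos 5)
Your argument is correct and is exactly the route the paper intends: the paper states the lemma "immediately follows from \eqref{Eq:Modular_Scaling_Action}", and your slicing of that identity by $(\mathrm{id}\otimes f)$, together with the Sweedler-notation unfolding of $\tau_t^G(a)=f^G_{-\sqrt{-1}t}*a*f^G_{\sqrt{-1}t}$ (including the correct observation that the left/right placement reverses when passing to the convolution on $\mathcal{U}(G)$), supplies precisely the omitted verification. No gaps.
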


\bigskip
There are three canonical $*$-subalgebras of $\mathcal{U}(G)$ or $\prod_{\pi\in\widehat{G}} B(\mathcal{H}_\pi)$. The collection of all elements $(x_\pi)_{\pi \in \widehat{G}}$ in $\prod_{\pi\in\widehat{G}} B(\mathcal{H}_\pi)$ such that $\sup_{\pi \in \widehat{G}} \Vert x_\pi\Vert_\infty < +\infty$ becomes a unital $*$-subalgebra and is denoted by $\bigoplus_{\pi\in\widehat{G}} B(\mathcal{H}_\pi)$. Remark that this unital $*$-subalgebra becomes a von Neumann algebra in an obvious way and its image, denoted by $W^*(G)$, in $\mathcal{U}(G)$ by the mapping $f\mapsto U_{\widehat{G}}(f)$ is exactly the unital $*$-subalgebra consisting of all $f \in \mathcal{U}(G)$ with $\sup_{\pi \in \widehat{G}}\Vert U_\pi(f)\Vert_\infty < +\infty$. We call $W^*(G)$ the \emph{group von Neumann algebra} associated with $G$ in what follows. The algebraic direct sum $\bigodot_{\pi\in\widehat{G}}B(\mathcal{H}_\pi)$, which sits inside $\bigoplus_{\pi\in\widehat{G}} B(\mathcal{H}_\pi)$, is also a non-unital $*$-subalgebra, and its image, denoted by $\mathbb{C}[G]$, in $W^*(G)$ by the mapping $f\mapsto U_{\widehat{G}}(f)$ is exactly the non-unital $*$-subalgebra consisting of all linear combinations of the $a_{ij}(\pi)h$. This should be called the \emph{group algebra} associated with $G$. (Remark that the notation $\mathbb{C}[G]$ stands for $\mathcal{A}$ in \cite{Neshveyev} differently from here.)   Finally, we have the $C^*$-norm closures of $\bigodot_{\pi\in\widehat{G}}B(\mathcal{H}_\pi)$ and $\mathbb{C}[G]$ in $\bigoplus_{\pi\in\widehat{G}} B(\mathcal{H}_\pi)$ and $W^*(G)$, respectively, and the latter is denoted by $C^*(G)$, a non-unital $C^*$-algebra, which we call the \emph{group $C^*$-algebra} associated with $G$.  In what follows, we often identify $\mathbb{C}[G] \subset C^*(G) \subset W^*(G) \subset \mathcal{U}(G)$ with 
\[
\bigodot_{\pi\in\widehat{G}}B(\mathcal{H}_\pi) \subset \overline{\bigodot_{\pi\in\widehat{G}}B(\mathcal{H}_\pi)}^{\text{$C^*$-norm}} \subset \bigoplus_{\pi\in\widehat{G}} B(\mathcal{H}_\pi) \subset \prod_{\pi\in\widehat{G}} B(\mathcal{H}_\pi) 
\]
via the mapping $f\mapsto U_{\widehat{G}}(f)$. We remark that the above lemma guarantees that the action $\widehat{\tau}^G\colon\mathbb{R} \curvearrowright \mathcal{U}(G)$ naturally induces actions on $\mathbb{C}[G] \subset C^*(G) \subset W^*(G)$ of the real line $\mathbb{R}$ as its restrictions to them, and we still denote them by the same symbol. 

\medskip
We now introduce the notion of quantized characters of $G$.
\begin{definition}
A \emph{quantized character} of a CQG $G$ is a $\hat{\tau}^G$-KMS state on its group $C^*$-algebra $C^*(G)$ with inverse temperature $-1$ .
\end{definition}

Note that $C^*(G)$ is in general not unital, and hence a state on $C^*(G)$ is defined as a positive linear functional of norm $1$. In particular, the quantized characters do not form a compact set in general. The following lemma justifies this definition.

\begin{lemma}\label{Lemma:decom}
Any quantized character $\chi$ a CQG $G$ can be uniquely decomposed as \begin{equation}\label{Eq:decom}\chi=\sum_{\pi\in\widehat{G}}c_\pi\chi_\pi\quad (c_\pi\geq0),\end{equation} where $\chi_\pi$ is defined to be $\chi_\pi(\,\cdot\,):=\mathrm{Tr}(F_\pi p_\pi(\,\cdot\,))/\dim_q(\pi)$  and $p_\pi\colon C^*(G)\to B(\mathcal{H}_\pi)$ is the canonical projection. Conversely, a state on $C^*(G)$ defined by the right-hand side of Equation \eqref{Eq:decom} with $\sum_{\pi\in\widehat{G}}c_\pi=1$ becomes a quantized character of the CQG $G$.
\end{lemma}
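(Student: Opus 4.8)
The plan is to understand the structure of $C^*(G)$ explicitly via the identification $C^*(G)\cong\overline{\bigodot_{\pi\in\widehat{G}}B(\mathcal{H}_\pi)}^{C^*\text{-norm}}$, which is the $c_0$-direct sum $c_0\text{-}\bigoplus_{\pi\in\widehat{G}}B(\mathcal{H}_\pi)$. Under this identification the dual scaling action is, by Lemma \ref{lemma:dualscalingaction_functional}, the diagonal action $\widehat{\tau}^G_t\big((x_\pi)_\pi\big)=\big(F_\pi^{\sqrt{-1}t}x_\pi F_\pi^{-\sqrt{-1}t}\big)_\pi$, whose generator on each block is $\mathrm{ad}(\log F_\pi)$. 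The first step is therefore to write down the KMS condition for a state $\chi$ on $C^*(G)$ with inverse temperature $-1$ and decode it blockwise. Since $C^*(G)$ decomposes as a $C^*$-direct sum of the matrix algebras $B(\mathcal{H}_\pi)$, any state restricts to each block $B(\mathcal{H}_\pi)$ as a positive functional; writing $c_\pi:=\chi(p_\pi(\,\cdot\,))$ evaluated on the relevant approximate unit of that block (i.e.\ $c_\pi$ is the total mass $\chi$ places on the $\pi$-block) and $\chi|_{B(\mathcal{H}_\pi)}=c_\pi\,\omega_\pi$ for a state $\omega_\pi$ on $B(\mathcal{H}_\pi)$, the normalization $\sum_\pi c_\pi=1$ is exactly the statement that $\chi$ has norm one on a $c_0$-direct sum.

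The crux is to show that the KMS condition forces $\omega_\pi(\,\cdot\,)=\mathrm{Tr}(F_\pi\,\cdot\,)/\dim_q(\pi)$ on each block. I would invoke the standard fact that, on a full matrix algebra $B(\mathcal{H}_\pi)$ carrying the modular-type automorphism group $\sigma_t=\mathrm{Ad}(F_\pi^{\sqrt{-1}t})$, a state is $(\sigma,\beta)$-KMS if and only if its density matrix is proportional to the Gibbs factor $e^{-\beta\,\log\Delta}$, where $\Delta$ is the positive operator implementing the flow. Concretely, any state on $B(\mathcal{H}_\pi)$ has the form $\mathrm{Tr}(\rho_\pi\,\cdot\,)$ for a positive $\rho_\pi$ with $\mathrm{Tr}(\rho_\pi)=c_\pi$; plugging this into the KMS boundary condition $\chi(ab)=\chi(b\,\widehat{\tau}^G_{-\sqrt{-1}(-1)}(a))=\chi\big(b\,\widehat{\tau}^G_{\sqrt{-1}}(a)\big)$ for $a,b$ in the block and using $\widehat{\tau}^G_{\sqrt{-1}}=\mathrm{Ad}(F_\pi^{-1})$ yields $\mathrm{Tr}(\rho_\pi ab)=\mathrm{Tr}(\rho_\pi\,b\,F_\pi^{-1}aF_\pi)$ for all $a,b$, i.e.\ $F_\pi\rho_\pi F_\pi=\rho_\pi F_\pi F_\pi$ after cycling the trace, hence $\rho_\pi$ commutes appropriately and must equal a scalar multiple of $F_\pi$. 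Matching the trace normalization $\mathrm{Tr}(\rho_\pi)=c_\pi$ against $\mathrm{Tr}(F_\pi)=\dim_q(\pi)$ gives $\rho_\pi=c_\pi F_\pi/\dim_q(\pi)$, which is precisely $\chi_\pi$ scaled by $c_\pi$. Uniqueness of the decomposition then follows since the $c_\pi$ are determined by the block masses.

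For the converse, I would simply verify directly that the state defined by the right-hand side of \eqref{Eq:decom} with $\sum_\pi c_\pi=1$ satisfies the $\widehat{\tau}^G$-KMS condition at inverse temperature $-1$. Because the flow and the candidate density matrices are simultaneously block-diagonal and each $\chi_\pi$ has density $F_\pi/\dim_q(\pi)$ commuting correctly with $F_\pi^{\sqrt{-1}t}$, the analytic continuation $t\mapsto\chi(a\,\widehat{\tau}^G_t(b))$ extends to the strip and the boundary values match; this is a routine Gibbs-state computation on each finite-dimensional block, and the infinite direct sum causes no trouble because elements of $\mathbb{C}[G]$ are finitely supported and one passes to $C^*(G)$ by density and continuity of the state.

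The main obstacle I anticipate is bookkeeping around the non-unitality of $C^*(G)$: the KMS condition and the notion of ``state of norm one'' must be handled on a non-unital $C^*$-algebra, so I would work with an approximate identity $(e_\lambda)$ built from the block units $1_{B(\mathcal{H}_\pi)}$ and define $c_\pi=\lim_\lambda\chi(e_\lambda p_\pi)$, checking that $\sum_\pi c_\pi=\|\chi\|=1$. The only genuinely representation-theoretic input needed beyond linear algebra is the defining property $\widehat{\tau}^G_t=\mathrm{Ad}(F_\pi^{\sqrt{-1}t})$ on each block from Lemma \ref{lemma:dualscalingaction_functional}, together with $\mathrm{Tr}(F_\pi)=\dim_q(\pi)$; everything else reduces to the elementary classification of KMS states on a matrix algebra, so once the blockwise reduction is set up cleanly the proof is essentially immediate.
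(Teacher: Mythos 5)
Your proposal is correct and follows essentially the same route as the paper: decompose the state blockwise over the $c_0$-direct sum $C^*(G)=\overline{\bigodot_\pi B(\mathcal{H}_\pi)}$, identify each block component via the uniqueness of KMS states on a full matrix algebra (which the paper simply cites from Bratteli--Robinson rather than recomputing as you do), and recover $\chi=\sum_\pi c_\pi\chi_\pi$ by norm-density of $\mathbb{C}[G]$. Only watch the sign convention in $\widehat{\tau}^G_{\pm\sqrt{-1}}$ and the garbled intermediate identity ``$F_\pi\rho_\pi F_\pi=\rho_\pi F_\pi F_\pi$'' --- the correct relation is $F_\pi b\rho_\pi=\rho_\pi bF_\pi$ for all $b$, which still forces $\rho_\pi$ to be a scalar multiple of $F_\pi$, so your conclusion stands.
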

\begin{proof}
Let $\iota_\pi\colon B(\mathcal{H}_\pi)\to C^*(G)$ be the canonical injective $*$-homomorphism and $\psi_\pi:=\chi\circ\iota_\pi$ on $B(\mathcal{H}_\pi)$. Since $B(\mathcal{H}_\pi)$ is finite-dimensional (and hence $x_\pi\in B(\mathcal{H}_\pi)\mapsto|\psi_\pi(x_\pi)|$ takes the maximum over the unit ball) and since $\|x\|=\sup_{\pi\in\widehat{G}}\|x_\pi\|$ for every $x=(x_\pi)_{\pi\in\widehat{G}}\in C^*(G)$, it is easy to show that \[\sum_{\pi\in J}\|\psi_\pi\|\leq\|\chi\|\] for every finite subset $J\subset\widehat{G}$. Thus we have $\sum_{\pi\in\widehat{G}}\|\psi_\pi\|\leq\|\chi\|$. Then we can define the bounded linear functional $\psi$ on $C^*(G)$ by \[\psi\colon x=(x_\pi)_{\pi\in\widehat{G}}\in C^*(G)\mapsto\sum_{\pi\in\widehat{G}}\psi_\pi(x_\pi),\] since \[\sum_{\pi\in\widehat{G}}|\psi_\pi(x_\pi)|\leq\sum_{\pi\in\widehat{G}}\|\psi_\pi\|\|x_\pi\|\leq\|x\|\sum_{\pi\in\widehat{G}}\|\psi_\pi\|\leq\|x\|\|\chi\|<\infty.\] It is easy to see that $\psi$ agrees with $\chi$ on $\mathbb{C}[G]$, and hence $\psi=\chi$ since $\mathbb{C}[G]$ is norm-dense in $C^*(G)$. Finally, we obtain that $\psi_\pi/\|\psi\|=\chi_\pi|_{B(\mathcal{H}_\pi)}$ because they are KMS states on $B(H_\pi)$ for the action $\widehat{\tau}^G|_{B(\mathcal{H}_\pi)}$ thanks to Lemma \ref{lemma:dualscalingaction_functional} and the uniqueness of KMS states on $B(\mathcal{H}_\pi)$ (see \cite[Exapmple 5.3.31]{BratteliRobinson2}).
\end{proof}

\subsection{Quantum subgroups and Their Duals}
Let $G=(A,\delta_G)$ be a CQG and $H=(B,\delta_H)$ a quantum subgroup of $G$ with surjective $*$-homomorphism $\theta\colon A\to B$. We fix complete families $\{U_\pi\}_{\pi\in\widehat{G}}$ and $\{U_\rho\}_{\rho\in\widehat{H}}$ of representatives.

\medskip
We will investigate the relation between the dual actions $\widehat{\tau}^G,\widehat{\tau}^H$ of the scaling groups $\tau^G,\tau^H$. To do so, we need to discuss the \emph{branching rule} of irreducible representations of $G,\, H$. Recall that for any $\pi \in\widehat{G}$ the restriction $U_{\pi|_H} = (\mathrm{id}\otimes\theta)(U_\pi)$ admits an irreducible decomposition, that is, there exist a finite subset $\mathcal{F}_\pi \subset \widehat{H}$, natural numbers $m_\pi(\rho)$, $\rho \in \mathcal{F}_\pi$, and $S_{\pi,l,\rho} \in \mathrm{Mor}(U_\rho,U_{\pi|_H})$, $1 \leq l \leq m_\pi(\rho)$, $\rho \in \mathcal{F}_\pi$, such that $S_{\rho,l,\pi}^*S_{\rho',l',\pi} = \delta_{\rho,\rho'}\delta_{l,l'}I_{\mathcal{H}_\rho}$ for every $1 \leq l \leq m_\pi(\rho)$ and $\rho \in \mathcal{F}_\pi$, $\sum_{\rho\in\mathcal{F}_\pi} \sum_{l=1}^{m_\pi(\rho)} S_{\rho,l,\pi}S_{\rho,l,\pi}^* = 1_{\mathcal{H}_\pi}$ and 
\begin{equation}\label{Eq4.1}
U_{\pi|_H} = \sum_{\rho\in\mathcal{F}_\pi}\sum_{l=1}^{m_\pi(\rho)} (S_{\rho,l,\pi}\otimes 1_B)U_\rho(S_{\rho,l,\pi}\otimes1_B)^*.
\end{equation} 
It is well known that the family $\mathcal{F}_\pi$ as well as the multiplicities $m_\pi(\rho)$ are uniquely determined and describe the brunching rule. We write $\rho \prec \pi$ for $(\rho,\pi) \in \widehat{H}\times\widehat{G}$, if $\rho \in \mathcal{F}_\pi$. The $S_{\rho,l,\pi}$ are not unique, and thus we choose and fix them throughout this section. 

\medskip
We define the map $\Theta:\mathcal{U}(H)\to\mathcal{U}(G)$ by $\Theta(f)=f\circ\theta$ for any $f\in\mathcal{U}(H)$. It is easy to see that the map $\Theta$ becomes a $*$-injective unital homomorphism. Note that the injectivity follows from the fact that $\theta$ sends the linear span of all matrix coefficients associated with $G$ onto that associated with $H$, see \cite[Lemma 2.8(1)]{Tomatsu}. We need the next simple proposition latter.

\begin{proposition}\label{P4.1} 
We have $\Theta(W^*(H)) \subset W^*(G)$. 
\end{proposition}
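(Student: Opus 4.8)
The plan is to work directly from the characterization of the group von Neumann algebras recorded just above: an element $f \in \mathcal{U}(G)$ lies in $W^*(G)$ exactly when $\sup_{\pi \in \widehat{G}} \Vert U_\pi(f)\Vert_\infty < +\infty$, and likewise $g \in W^*(H)$ means $M := \sup_{\rho \in \widehat{H}} \Vert U_\rho(g)\Vert_\infty < +\infty$. So, fixing $g \in W^*(H)$, the whole statement reduces to the single norm estimate $\sup_{\pi \in \widehat{G}} \Vert U_\pi(\Theta(g))\Vert_\infty \le M$. Everything then comes down to expressing $U_\pi(\Theta(g))$ through the operators $U_\rho(g)$ with $\rho \prec \pi$, which is precisely what the branching data $S_{\rho,l,\pi}$ are built to do.

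First I would push $\Theta$ through $\theta$. Since $\Theta(g) = g\circ\theta$ and $(\mathrm{id}\otimes\theta)(U_\pi) = U_{\pi|_H}$, composing the two slice maps gives
\[
U_\pi(\Theta(g)) = (\mathrm{id}\otimes(g\circ\theta))(U_\pi) = (\mathrm{id}\otimes g)\big((\mathrm{id}\otimes\theta)(U_\pi)\big) = (\mathrm{id}\otimes g)(U_{\pi|_H}).
\]
Inserting the branching decomposition \eqref{Eq4.1} and applying the slice $(\mathrm{id}\otimes g)$ to the (finite) sum term by term, and noting that each factor $S_{\rho,l,\pi}\otimes 1_B$ acts as a scalar on the $B$-leg, yields the block formula
\[
U_\pi(\Theta(g)) = \sum_{\rho\in\mathcal{F}_\pi}\sum_{l=1}^{m_\pi(\rho)} S_{\rho,l,\pi}\, U_\rho(g)\, S_{\rho,l,\pi}^*.
\]

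The last step is the norm bound. By the defining relations $S_{\rho,l,\pi}^*S_{\rho',l',\pi} = \delta_{\rho,\rho'}\delta_{l,l'}I_{\mathcal{H}_\rho}$ and $\sum_{\rho,l} S_{\rho,l,\pi}S_{\rho,l,\pi}^* = 1_{\mathcal{H}_\pi}$, the isometries $S_{\rho,l,\pi}$ have mutually orthogonal ranges summing to the identity, so $(x_{\rho,l})_{\rho,l}\mapsto \sum_{\rho,l} S_{\rho,l,\pi}\,x_{\rho,l}\,S_{\rho,l,\pi}^*$ is a unital $*$-isomorphism of $\bigoplus_{\rho\in\mathcal{F}_\pi}\bigoplus_{l=1}^{m_\pi(\rho)} B(\mathcal{H}_\rho)$ onto the corresponding block-diagonal subalgebra of $B(\mathcal{H}_\pi)$. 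Hence $U_\pi(\Theta(g))$ is block-diagonal with blocks $U_\rho(g)$ (each $U_\rho(g)$ occurring $m_\pi(\rho)$ times), and therefore
\[
\Vert U_\pi(\Theta(g))\Vert_\infty = \max_{\rho\in\mathcal{F}_\pi}\Vert U_\rho(g)\Vert_\infty \le \sup_{\rho\in\widehat{H}}\Vert U_\rho(g)\Vert_\infty = M.
\]
Taking the supremum over $\pi \in \widehat{G}$ gives $\Theta(g) \in W^*(G)$.

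I do not expect a real obstacle: the only points demanding genuine care are the composition-of-slices identity $U_\pi(\Theta(g)) = (\mathrm{id}\otimes g)(U_{\pi|_H})$ and the legitimacy of applying $(\mathrm{id}\otimes g)$ termwise to the branching sum, after which the estimate is an immediate consequence of the fact that conjugation by a system of isometries with orthogonal ranges summing to the identity is isometric on the resulting block-diagonal algebra. It is natural to isolate this block formula here, since it is exactly the identity that will presumably be refined in Proposition \ref{P4.2} into the intertwining statement for the dual scaling groups.
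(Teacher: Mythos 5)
Your argument is correct and is essentially the paper's own proof: the paper likewise establishes the block formula $U_\pi(\Theta(f))=\sum_{\rho\prec\pi}\sum_{l} S_{\rho,l,\pi}U_\rho(f)S_{\rho,l,\pi}^*$ (Equation \eqref{Eq:sec4}) and deduces $\Vert U_\pi(\Theta(f))\Vert\le\sup_{\rho\prec\pi}\Vert U_\rho(f)\Vert$. You merely spell out in more detail the composition-of-slices identity and why conjugation by the isometries is isometric, which the paper leaves implicit.
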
 
\begin{proof} 
For any $\pi \in \widehat{G}$ and $f \in \mathcal{U}(H)$ we have 
\begin{equation}\label{Eq:sec4}
U_\pi(\Theta(f))=\sum_{\rho\in\widehat{H}; \rho \prec \pi} \sum_{l=1}^{m_\pi(\rho)} S_{\rho,l,\pi} U_\rho(f) S_{\rho,l,\pi}^*,
\end{equation}
thus, it follows that $\Vert U_\pi(\Theta(f))\Vert \leq \sup_{\rho \in \widehat{H};\rho\prec\pi} \Vert U_\rho(f)\Vert$. This immediately implies the desired assertion. 
\end{proof}

We remark that both $\Theta(\mathbb{C}[H]) \subset \Theta(C^*(H))$ do not sit inside $C^*(G)$ in general. This forces us to construct a canonical $C^*$-algebra associated with a given inductive system of CQGs in an indirect way; see the next section.

\medskip
Formula \eqref{Eq:sec4} gives an explicit description of the embedding $\Theta$. Indeed, we have \begin{equation}\label{eq:explicit_mapping}U_{\widehat{G}}(\Theta(f)) = \Big(\sum_{\rho \in \widehat{H}; \rho \prec \pi} \sum_{l=1}^{m_\pi(\rho)} S_{\rho,l,\pi}U_\rho(f)S_{\rho,l,\pi}^*\Big)_{\pi \in \widehat{G}},\quad f\in\mathcal{U}(H).\end{equation}

\medskip
We then investigate how $F_\pi=(\mathrm{id}\otimes f^G_1)(U_\pi),\pi\in\widehat{G}$ are related to $F_\rho=(\mathrm{id}\otimes f^H_1)(U_\rho),\rho\in\widehat{H}$.
\begin{lemma}\label{L4.1}
Let $\pi\in\widehat{G}$ be arbitrarily given. Then \[F_{\pi|_H}=(\mathrm{id}\otimes f^H_1)(U_{\pi|_H})=U_\pi(\Theta(f^H_1))=\sum_{\rho\in\widehat{H};\rho\prec\pi}\sum_{l=1}^{m_\pi(\rho)}S_{\rho,l,\pi}F_\rho S_{\rho,l,\pi}^*\in U_{\pi}(\Theta(\mathcal{U}(H))),\] and there exists a unique positive invertible element $W_\pi\in\mathrm{End}(U_{\pi|_H})=U_{\pi}(\Theta(\mathcal{U}(H)))'$ on $\mathcal{H}_\pi$ such that \begin{equation}\label{eq:density}F_\pi=W_\pi F_{\pi|_H}=F_{\pi|_H}W_\pi.\end{equation}
\end{lemma}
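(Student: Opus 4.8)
The plan is to verify the displayed chain of equalities by unwinding definitions, and then to produce $W_\pi$ as the ``ratio'' $F_{\pi|_H}^{-1}F_\pi$, whose good properties come from viewing $F_\pi$ as an intertwiner \emph{for $H$}. First I would treat the chain for $F_{\pi|_H}$. By the definition of the density matrix applied to the (not necessarily irreducible) unitary $H$-representation $U_{\pi|_H}$, one has $F_{\pi|_H} = (\mathrm{id}\otimes f^H_1)(U_{\pi|_H})$. Since $U_{\pi|_H} = (\mathrm{id}\otimes\theta)(U_\pi)$ and $\Theta(f) = f\circ\theta$, the slice identity $(\mathrm{id}\otimes(f\circ\theta))(U_\pi) = (\mathrm{id}\otimes f)(U_{\pi|_H})$ gives $(\mathrm{id}\otimes f^H_1)(U_{\pi|_H}) = U_\pi(\Theta(f^H_1))$. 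Finally, feeding $f = f^H_1$ into \eqref{Eq:sec4} and using $U_\rho(f^H_1) = (\mathrm{id}\otimes f^H_1)(U_\rho) = F_\rho$ for the irreducible $U_\rho$ produces the sum $\sum_{\rho\prec\pi}\sum_l S_{\rho,l,\pi}F_\rho S_{\rho,l,\pi}^*$; the membership in $U_\pi(\Theta(\mathcal{U}(H)))$ is then immediate from $f^H_1\in\mathcal{U}(H)$. This part is purely formal.

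For the second assertion, I would first record the commutant description $\mathrm{End}(U_{\pi|_H}) = U_\pi(\Theta(\mathcal{U}(H)))'$. Since $U_{\pi|_H}(f) = U_\pi(\Theta(f))$ and $f\mapsto U_{\pi|_H}(f)$ is a $*$-homomorphism whose image is the full diagonal algebra $\bigoplus_{\rho\prec\pi} B(\mathcal{H}_\rho)\otimes 1_{m_\pi(\rho)}$ under the decomposition $\mathcal{H}_\pi \cong \bigoplus_{\rho\prec\pi}\mathcal{H}_\rho\otimes\mathbb{C}^{m_\pi(\rho)}$ supplied by the $S_{\rho,l,\pi}$ (here I use that each $U_\rho(\,\cdot\,)$ is surjective and that the finitely many $\rho\prec\pi$ can be addressed independently, as $U_{\widehat{H}}$ is bijective), Schur's lemma identifies its commutant with $\mathrm{End}(U_{\pi|_H}) = \bigoplus_{\rho\prec\pi} 1_{\mathcal{H}_\rho}\otimes M_{m_\pi(\rho)}(\mathbb{C})$. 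The crucial observation is then that $F_\pi$ is itself an $H$-intertwiner: the relation $(F_\pi\otimes 1)U_\pi = U_\pi^{cc}(F_\pi\otimes 1)$ for $F_\pi\in\mathrm{Mor}(U_\pi, U_\pi^{cc})$, after applying $\mathrm{id}\otimes\theta$ and using that contragredients commute with restriction, i.e. $(\mathrm{id}\otimes\theta)(U_\pi^{cc}) = (U_{\pi|_H})^{cc}$, shows $F_\pi\in\mathrm{Mor}(U_{\pi|_H}, (U_{\pi|_H})^{cc})$. As $F_{\pi|_H}$ is an invertible element of this same intertwiner space, the composite $W_\pi := F_{\pi|_H}^{-1}F_\pi$ lies in $\mathrm{End}(U_{\pi|_H})$.

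It then remains to check that this $W_\pi$ does the job. Under the block decomposition above, $F_{\pi|_H} = \bigoplus_{\rho\prec\pi} F_\rho\otimes 1_{m_\pi(\rho)}$ is central in $\mathrm{End}(U_{\pi|_H}) = \bigoplus_{\rho\prec\pi} 1_{\mathcal{H}_\rho}\otimes M_{m_\pi(\rho)}(\mathbb{C})$ (equivalently, this is the naturality of the density matrix). Hence $W_\pi$ commutes with $F_{\pi|_H}$, so $F_\pi = F_{\pi|_H}W_\pi = W_\pi F_{\pi|_H}$, which in particular yields $F_\pi F_{\pi|_H} = F_{\pi|_H}F_\pi$; being a product of two commuting positive invertible operators $F_\pi$ and $F_{\pi|_H}^{-1}$, $W_\pi$ is positive and invertible. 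Uniqueness is immediate, since any $W$ with $F_\pi = W F_{\pi|_H}$ must equal $F_\pi F_{\pi|_H}^{-1} = W_\pi$ by invertibility of $F_{\pi|_H}$.

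The only genuinely nontrivial point—and the step I would be most careful about—is the commutation $F_\pi F_{\pi|_H} = F_{\pi|_H}F_\pi$. I have reduced it to two structural facts: that $F_\pi$ restricts to an $H$-intertwiner of $U_{\pi|_H}$ (so that $W_\pi\in\mathrm{End}(U_{\pi|_H})$), and that $F_{\pi|_H}$ is central in $\mathrm{End}(U_{\pi|_H})$. Both rest on the compatibility of the contragredient construction with the restriction functor and on the block structure coming from the branching rule \eqref{Eq4.1}, so once those are in place the remaining manipulations are routine.
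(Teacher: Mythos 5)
Your proposal is correct and follows essentially the same route as the paper: establish $F_\pi\in\mathrm{Mor}(U_{\pi|_H},U_{\pi|_H}^{cc})$ via $(\mathrm{id}\otimes\theta)(U_\pi^{cc})=U_{\pi|_H}^{cc}$, set $W_\pi=F_{\pi|_H}^{-1}F_\pi\in\mathrm{End}(U_{\pi|_H})=U_\pi(\Theta(\mathcal{U}(H)))'$, and deduce commutation with $F_{\pi|_H}$ and positivity from there. The only cosmetic slip is calling $F_{\pi|_H}$ ``central in $\mathrm{End}(U_{\pi|_H})$'' --- it lies in $U_\pi(\Theta(\mathcal{U}(H)))$, the commutant of $\mathrm{End}(U_{\pi|_H})$, rather than in $\mathrm{End}(U_{\pi|_H})$ itself, but the fact you actually use (that it commutes with $W_\pi$) is exactly right.
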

\begin{proof}
The first formula is trivial.

Since $U_{\pi|_H}^{cc}=(\mathrm{id}\otimes\theta)(U_\pi^{cc})$, we observe that $F_\pi\in\mathrm{Mor}(U_\pi,U_\pi^{cc})\subset\mathrm{Mor}(U_{\pi|_H},U_{\pi|_H}^{cc})$. Hence $W_\pi:=F_{\pi|_H}^{-1}F_\pi$ falls in $\mathrm{End}(U_{\pi|_H})$. We observe that $\mathrm{End}(U_{\pi|_H})=U_{\pi}(\Theta(\mathcal{H}))'$ on $\mathcal{H}_\pi$, and hence $F_\pi=F_{\pi|_H}W_\pi=W_\pi F_{\pi|_H}=F_{\pi|_H}^{1/2}W_\pi F_{\pi|_H}^{1/2}$ and $W_\pi=F_{\pi|_H}^{-1/2}F_\pi F_{\pi|_H}^{-1/2}$ is positive invertible. 
\end{proof}

\begin{remark}\label{R4.1}
Remark that the $S_{\rho,l,\pi}S_{\rho,m,\pi}^*,\ 1\leq l,m\leq m_\pi(\rho),\ \rho\prec\pi,$ form a matrix unit system of $\mathrm{End}(U_{\pi|_H})=U_\pi(\Theta(\mathcal{U}(H)))'$ on $\mathcal{H}_\pi$. The above lemma shows, in particular, that one can re-choose, by perturbing them by a suitable unitary in $U_\pi(\Theta(\mathcal{U}(H)))'$ for each $\pi$, a complete family $\{U_\pi\}_{\pi\in\widehat{G}}$ of representatives and the intertwiners $S_{\rho,l,\pi}$ \emph{with keeping $\{U_\rho\}_{\rho\in\widehat{H}}$ and $\{U_{\pi|_H}\}_{\pi\in\widehat{G}}$} in such a way that all $W_\pi$ are ``diagonalized", that is, \[W_\pi=\sum_{\rho\in\widehat{H};\rho\prec\pi}\sum_{l=1}^{m_\pi(\rho)}w(\rho,l,\pi)S_{\rho,l,\pi}S_{\rho,l,\pi}^*\] with $w(\rho,l,\pi)$ not decreasing (or not increasing) in $l$. In this case, the branching formula of the density matrix $F_\pi$ follows from Equation \eqref{eq:density}: \begin{equation}\label{Eq:Density}F_\pi=\sum_{\rho\in\widehat{H};\rho\prec\pi}\sum_{l=1}^{m_\pi(\rho)}w(\rho,l,\pi)S_{\rho,l,\pi}F_\rho S_{\rho,l,\pi}^*.\end{equation} This is an important observation to construct a canonical ``weighted branching graph" for a given inductive system of CQGs latter.
\end{remark}

The next proposition seems fundamental in the study of quantum subgroups and might be well-known to experts. Indeed, it follows from e.g. \cite[Proposition 3.15]{MeyerRoyWoronowicz}. However, we include the proof for the convenience of the reader.
\begin{proposition}\label{P4.2} 
We have $\theta\circ\tau_t^G = \tau_t^H\circ\theta$, and hence $\Theta\circ\widehat{\tau}^H_t = \widehat{\tau}^G_t\circ\Theta$ for every $t \in \mathbb{R}$. 
\end{proposition}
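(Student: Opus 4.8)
The plan is to verify the first identity $\theta\circ\tau_t^G=\tau_t^H\circ\theta$ on the dense Hopf $*$-algebra $\mathcal{A}$ by testing it against every irreducible unitary representation of $G$, and then to deduce the dual statement purely formally. First I would fix $\pi\in\widehat{G}$ and apply $(\mathrm{id}\otimes\,\cdot\,)$ to $U_\pi$ on both sides. Using the scaling-group formula \eqref{Eq:Modular_Scaling_Action} for $G$ and the fact that $\theta$ acts only on the second leg and is unital (so it fixes $B(\mathcal{H}_\pi)$ and sends $1_A$ to $1_B$), one obtains
\[
(\mathrm{id}\otimes(\theta\circ\tau_t^G))(U_\pi)=(F_\pi^{\sqrt{-1}t}\otimes 1_B)\,U_{\pi|_H}\,(F_\pi^{-\sqrt{-1}t}\otimes 1_B).
\]
On the other hand, since $(\mathrm{id}\otimes\theta)(U_\pi)=U_{\pi|_H}$ is a (possibly reducible) unitary representation of $H$, the same formula \eqref{Eq:Modular_Scaling_Action} applied to $H$ and to $U_{\pi|_H}$ gives
\[
(\mathrm{id}\otimes(\tau_t^H\circ\theta))(U_\pi)=(F_{\pi|_H}^{\sqrt{-1}t}\otimes 1_B)\,U_{\pi|_H}\,(F_{\pi|_H}^{-\sqrt{-1}t}\otimes 1_B).
\]
Thus the task reduces to comparing the two conjugations of $U_{\pi|_H}$ by the unitaries $F_\pi^{\sqrt{-1}t}$ and $F_{\pi|_H}^{\sqrt{-1}t}$.

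The key input is Lemma \ref{L4.1}, which gives $F_\pi=W_\pi F_{\pi|_H}=F_{\pi|_H}W_\pi$ with $W_\pi$ a positive invertible element of $\mathrm{End}(U_{\pi|_H})$. I would first note that the two relations $W_\pi F_{\pi|_H}=F_{\pi|_H}W_\pi$ say precisely that $W_\pi$ and $F_{\pi|_H}$ commute, so that $F_\pi=W_\pi F_{\pi|_H}$ is a product of two commuting positive invertible operators; hence $\log F_\pi=\log W_\pi+\log F_{\pi|_H}$ and therefore $F_\pi^{\sqrt{-1}t}=W_\pi^{\sqrt{-1}t}\,F_{\pi|_H}^{\sqrt{-1}t}$ for every $t\in\mathbb{R}$. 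Since $\mathrm{End}(U_{\pi|_H})$ is a finite-dimensional $C^*$-algebra and $W_\pi$ is a positive invertible element of it, the unitary $W_\pi^{\sqrt{-1}t}$ again lies in $\mathrm{End}(U_{\pi|_H})$, so $(W_\pi^{\sqrt{-1}t}\otimes 1_B)$ commutes with $U_{\pi|_H}$ and, because $W_\pi$ commutes with $F_{\pi|_H}$, also with $F_{\pi|_H}^{\pm\sqrt{-1}t}\otimes 1_B$. Substituting $F_\pi^{\sqrt{-1}t}=W_\pi^{\sqrt{-1}t}F_{\pi|_H}^{\sqrt{-1}t}$ into the first display and commuting the $W_\pi^{\sqrt{-1}t}$-factors past $U_{\pi|_H}$ so that they cancel, the first display collapses exactly to the second. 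Hence $\theta\circ\tau_t^G$ and $\tau_t^H\circ\theta$ agree after pairing with each $U_\pi$; as the matrix coefficients of the $U_\pi$, $\pi\in\widehat{G}$, linearly span $\mathcal{A}$, this forces $\theta\circ\tau_t^G=\tau_t^H\circ\theta$ on $\mathcal{A}$, and then on all of $A$ by density.

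The dual identity is then a formal consequence: for $f\in\mathcal{U}(H)$ one has $(\Theta\circ\widehat{\tau}_t^H)(f)=(f\circ\tau_t^H)\circ\theta=f\circ(\tau_t^H\circ\theta)$ and $(\widehat{\tau}_t^G\circ\Theta)(f)=(f\circ\theta)\circ\tau_t^G=f\circ(\theta\circ\tau_t^G)$, so the already-established equality $\tau_t^H\circ\theta=\theta\circ\tau_t^G$ yields $\Theta\circ\widehat{\tau}_t^H=\widehat{\tau}_t^G\circ\Theta$. The only genuinely non-formal point, and the step I expect to require the most care, is the factorization $F_\pi^{\sqrt{-1}t}=W_\pi^{\sqrt{-1}t}F_{\pi|_H}^{\sqrt{-1}t}$ together with the membership $W_\pi^{\sqrt{-1}t}\in\mathrm{End}(U_{\pi|_H})$: both rest on the commutation $W_\pi F_{\pi|_H}=F_{\pi|_H}W_\pi$ provided by Lemma \ref{L4.1} and on the stability of the finite-dimensional $C^*$-algebra $\mathrm{End}(U_{\pi|_H})$ under functional calculus, so I would make sure to justify these commutation and functional-calculus facts explicitly rather than treat them as automatic.
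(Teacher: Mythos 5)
Your proposal is correct and follows essentially the same route as the paper: test the identity against each $U_\pi$, use Lemma \ref{L4.1} to write $F_\pi^{\sqrt{-1}t}=W_\pi^{\sqrt{-1}t}F_{\pi|_H}^{\sqrt{-1}t}$ with $W_\pi^{\sqrt{-1}t}\in\mathrm{End}(U_{\pi|_H})$ commuting with $F_{\pi|_H}^{\pm\sqrt{-1}t}$, and cancel the $W_\pi$-factors; the only cosmetic difference is that the paper expands $U_{\pi|_H}$ through the intertwiners $S_{\rho,l,\pi}$ and applies the scaling formula to each irreducible $U_\rho$, whereas you apply \eqref{Eq:Modular_Scaling_Action} to the reducible representation $U_{\pi|_H}$ directly.
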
 
\begin{proof}
It suffices to confirm the first identity against any matrix coefficients. For every $\pi \in \widehat{G}$ and $t \in \mathbb{R}$ we have, by formula \eqref{Eq:Modular_Scaling_Action} and Lemma \ref{L4.1},
\begin{align*}
(\mathrm{id}\otimes\theta\circ\tau^G_t)(U_\pi)
&= 
(F_\pi^{\sqrt{-1}t}\otimes1_B)U_{\pi|_H}(F_\pi^{-\sqrt{-1}t}\otimes1_B) \\
&=
\sum_{\rho\in\widehat{H}; \rho \prec \pi} \sum_{l=1}^{m_\pi(\rho)} 
(W_\pi^{\sqrt{-1}t}F_{\pi|_H}^{\sqrt{-1}t}S_{\rho,l,\pi}\otimes1_B)U_\rho(S_{\rho,l,\pi}^*F_{\pi|_H}^{-\sqrt{-1}t}W_\pi^{-\sqrt{-1}t}\otimes1_B) \\
&=
\sum_{\rho\in\widehat{H}; \rho \prec \pi} \sum_{l=1}^{m_\pi(\rho)} 
(W_\pi^{\sqrt{-1}t}S_{\rho,l,\pi}F_\rho^{\sqrt{-1}t}\otimes1_B)U_\rho(F_\rho^{-\sqrt{-1}t}S_{\rho,l,\pi}^*W_\pi^{-\sqrt{-1}t}\otimes1_B) \\
&=
(W_\pi^{\sqrt{-1}t}\otimes1_B)(\mathrm{id}\otimes\tau_t^H)(U_{\pi|_H})(W_\pi^{-\sqrt{-1}t}\otimes1_B) \\
&=
(\mathrm{id}\otimes\tau_t^H)(U_{\pi|_H}) =
(\mathrm{id}\otimes\tau_t^H\circ\theta)(U_\pi),
\end{align*}
where Lemma \ref{L4.1} guarantees that $F_\pi^{\sqrt{-1}t}=W_\pi^{\sqrt{-1}t} F_{\pi|_H}^{\sqrt{-1}t}$ as well as $(W_\pi^{\sqrt{-1}t}\otimes1_B)(\mathrm{id}\otimes\widehat{\tau}^H_t)(U_{\pi|_H})=(\mathrm{id}\otimes\widehat{\tau}^H_t)(U_{\pi|_H})(W_\pi^{\sqrt{-1}t}\otimes1_B)$. Hence we are done. 
\end{proof}

In closing of this section, we give an observation about the choice of the intertwiners $S_{\rho,l,\pi}$. Indeed, the next proposition shows that they are essentially unique (for our purpose), once the representatives $\{U_\rho\}_{\rho\in\widehat{H}}$ and $\{U_\pi\}_{\pi\in\widehat{G}}$ are fixed. The proof is not so hard and based on the same idea as in the proof of Lemma \ref{L4.1}. Hence we leave it to the reader.

\begin{proposition}\label{P:2.3}
The family of intertwiners $S_{\rho,l,\pi}$ are uniquely determined up to left multiplication of unitary elements in $\mathrm{End}(U_{\pi|_H})=U_\pi(\Theta(\mathcal{U}(H)))'$ on $\mathcal{H}_\pi$ by $U_{\pi|_H}$ and the $U_\rho$ with $\rho\in\widehat{H},\rho\prec\pi$.
\end{proposition}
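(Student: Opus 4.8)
The plan is to establish the two directions packaged in the statement: that left multiplication by a unitary of $\mathrm{End}(U_{\pi|_H})$ sends one admissible family of intertwiners to another, and conversely that any two admissible families differ by exactly such a unitary. Throughout I keep the representatives $\{U_\rho\}_{\rho\in\widehat{H}}$ and $\{U_\pi\}_{\pi\in\widehat{G}}$ fixed, so that each $U_{\pi|_H}$ is fixed as well. The first thing I would record is that, by the uniqueness of the branching rule recalled before \eqref{Eq4.1}, the index set $\mathcal{F}_\pi$ and the multiplicities $m_\pi(\rho)$ are the same for \emph{any} admissible family; hence two families $\{S_{\rho,l,\pi}\}$ and $\{S'_{\rho,l,\pi}\}$ are indexed identically, and each consists of isometric intertwiners obeying $S_{\rho,l,\pi}^*S_{\rho',l',\pi}=\delta_{\rho,\rho'}\delta_{l,l'}I_{\mathcal{H}_\rho}$ together with $\sum_{\rho\prec\pi}\sum_{l}S_{\rho,l,\pi}S_{\rho,l,\pi}^*=1_{\mathcal{H}_\pi}$ and realizing \eqref{Eq4.1}.

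For the easy direction I would take an arbitrary unitary $u\in\mathrm{End}(U_{\pi|_H})=U_\pi(\Theta(\mathcal{U}(H)))'$ and verify that $\{uS_{\rho,l,\pi}\}$ is again admissible. Since $u$ commutes with $U_{\pi|_H}$ and each $S_{\rho,l,\pi}\in\mathrm{Mor}(U_\rho,U_{\pi|_H})$, the composition $uS_{\rho,l,\pi}$ again lies in $\mathrm{Mor}(U_\rho,U_{\pi|_H})$; orthonormality survives because $(uS_{\rho,l,\pi})^*(uS_{\rho',l',\pi})=S_{\rho,l,\pi}^*u^*uS_{\rho',l',\pi}=S_{\rho,l,\pi}^*S_{\rho',l',\pi}$, and completeness because $\sum_{\rho,l}uS_{\rho,l,\pi}S_{\rho,l,\pi}^*u^*=u\,1_{\mathcal{H}_\pi}u^*=1_{\mathcal{H}_\pi}$.

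The substance is the converse, and here I would mimic the device used in the proof of Lemma \ref{L4.1}, namely producing an element of the commutant directly from the two families. Given admissible $\{S_{\rho,l,\pi}\}$ and $\{S'_{\rho,l,\pi}\}$, set
\[
T:=\sum_{\rho\prec\pi}\sum_{l=1}^{m_\pi(\rho)} S'_{\rho,l,\pi}S_{\rho,l,\pi}^*.
\]
Taking adjoints of the intertwining relation shows $S_{\rho,l,\pi}^*\in\mathrm{Mor}(U_{\pi|_H},U_\rho)$, so each summand is a composition $\mathrm{Mor}(U_\rho,U_{\pi|_H})\circ\mathrm{Mor}(U_{\pi|_H},U_\rho)$ and hence lies in $\mathrm{End}(U_{\pi|_H})=U_\pi(\Theta(\mathcal{U}(H)))'$; thus $T$ does too. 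Orthonormality of $\{S_{\rho,l,\pi}\}$ gives $TS_{\rho,l,\pi}=S'_{\rho,l,\pi}$, which is precisely the asserted passage by left multiplication. Finally $T$ is unitary: orthonormality of $\{S'_{\rho,l,\pi}\}$ and completeness of $\{S_{\rho,l,\pi}\}$ yield $T^*T=\sum_{\rho,l}S_{\rho,l,\pi}S_{\rho,l,\pi}^*=1_{\mathcal{H}_\pi}$, and the symmetric computation gives $TT^*=1_{\mathcal{H}_\pi}$.

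I do not expect a genuine obstacle; the whole argument is short bookkeeping with the defining relations of the $S_{\rho,l,\pi}$. The two points that do require care are (i) the appeal to branching-rule uniqueness, without which the two families need not be comparably indexed and the formula for $T$ would not make sense, and (ii) the verification that $T$ is a \emph{two-sided} unitary, which genuinely uses both orthonormality and completeness of \emph{both} families rather than of one. That $T$ automatically respects the isotypic decomposition $\mathrm{End}(U_{\pi|_H})\cong\bigoplus_{\rho\prec\pi}M_{m_\pi(\rho)}(\mathbb{C})$ of Remark \ref{R4.1} — so that the admissible freedom is exactly a block-unitary acting within each multiplicity space and never mixes inequivalent $U_\rho$ — follows from Schur's lemma and needs no separate work.
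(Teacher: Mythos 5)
Your proof is correct and follows exactly the route the paper indicates (it omits the proof, saying only that it is ``based on the same idea as in the proof of Lemma \ref{L4.1}''): you build the comparison operator $T=\sum_{\rho,l}S'_{\rho,l,\pi}S_{\rho,l,\pi}^*$, observe it lies in $\mathrm{End}(U_{\pi|_H})=U_\pi(\Theta(\mathcal{U}(H)))'$, and check it is unitary with $TS_{\rho,l,\pi}=S'_{\rho,l,\pi}$. The two points you flag (uniqueness of $\mathcal{F}_\pi$ and the multiplicities, and the need for both orthonormality and completeness of both families to get a two-sided unitary) are exactly the right ones to make explicit.
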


\subsection{Stratila-Voiculescu AF-flows and Quantized Characters} \label{SVAF}

For a given inductive system of compact groups, Stratila and Voiculescu studied factor representations of its inductive limit group by using a certain AF-algebra, see e.g.\ \cite{StraVoic:book}. Following Enomoto--Izumi \cite{EnomotoIzumi} we call this AF-algebra the \emph{Stratila--Voiculescu AF-algebra}. In this section, we introduce the same kind of AF-algebra with a certain one-parameter automorphism group for a given inductive system of CQGs, which we call the \emph{Stratila--Voiculescu AF-flow}.

\medskip
Let $\mathbb{G} = (G_N,\theta_N)_{N=0}^\infty$ be an inductive system of CQGs, that is, each $G_N=(A_N,\delta_N)$ is a CQG and also a quantum subgroup of the next $G_{N+1}$ with the surjective $*$-homomorphism $\theta_N\colon A_{N+1}\to A_N$. We always assume that $\widehat{G}_N$ is countable for every $N\geq1$ and $G_0=(\mathbb{C},\mathrm{id}_\mathbb{C})$ throughout this paper. As we saw in the previous section, we have the inductive system $(W^*(G_N),\Theta_N)_{N=0}^\infty$ with injective unital $*$-homomorphisms $\Theta_N\colon W^*(G_N)\to W^*(G_{N+1})$. Hence we can take the $C^*$-inductive limit $\mathfrak{M}(\mathbb{G}) := \varinjlim_{N} (W^*(G_N),\Theta_N)$. Then we can faithfully embed all $C^*(G_N) \subset W^*(G_N)$ into $\mathfrak{M}(\mathbb{G})$ and denote by $\mathfrak{A}(\mathbb{G})$ the unital $C^*$-subalgebra generated by those $C^*(G_N)$ inside $\mathfrak{M}(\mathbb{G})$. Here we remark that $C^*(G_0) = W^*(G_0) = \mathbb{C}$. We call this unital $C^*$-algebra $\mathfrak{A}(\mathbb{G})$ the \emph{Stratila--Voiculescu AF-algebra} associated with the inductive system $\mathbb{G}$. Indeed, it immediately follows from \cite[Theorem 2.2]{Bratteli:AFpaper} that $\mathfrak{A}(\mathbb{G})$ is an AF-algebra. We remark that the inductive system $\Theta_N\colon W^*(G_N) \to W^*(G_{N+1})$ as well as the structure of $C^*(G_N) \subset W^*(G_N)$ are completely determined by the unitary duals $\widehat{G}_N$ and their branching rule. See Equation \eqref{eq:explicit_mapping} and Proposition \ref{P:2.3}. Therefore, the AF-algebra $\mathfrak{A}(\mathbb{G})$ itself never remembers the effect of ``$q$-deformation". However, the dual actions $\widehat{\tau}^{G_N}$ certainly remembers the $q$-deformation when one considers $q$-deformed classical groups like $U_q(N)$, and also they are known to be the modular actions associated with the dual Haar weights, see e.g.\ \cite{PodlesWoronowicz}. Fortunately, as we saw before, the dual actions $\widehat{\tau}^{G_N}$ are compatible with the inductive system $\Theta_N$ (see Proposition \ref{P4.2}), and hence we obtain the action $\widehat{\tau}^{\mathbb{G}}\colon\mathbb{R} \curvearrowright \mathfrak{A}(\mathbb{G})$ as the restriction of $\varinjlim_{N}\widehat{\tau}^{G_{N}}_t$, $t \in \mathbb{R}$ to $\mathfrak{A}(\mathbb{G})$. It is rather easy to see that $\widehat{\tau}^{\mathbb{G}}\colon\mathbb{R} \curvearrowright \mathfrak{A}(\mathbb{G})$ is pointwise norm continuous. In what follows, we call this action $\widehat{\tau}^{\mathbb{G}}\colon\mathbb{R} \curvearrowright \mathfrak{A}(\mathbb{G})$ the \emph{Stratila--Voiculescu AF-flow} associated with the inductive system $\mathbb{G}$. We regard a certain class of KMS states with respect to $\widehat{\tau}^{\mathbb{G}}$ as the characters of the inductive system $\mathbb{G}$ as follows.

\begin{definition} A \emph{quantized character} of the inductive system $\mathbb{G}$ is a $\widehat{\tau}^{\mathbb{G}}$-KMS state $\chi$ on $\mathfrak{A}(\mathbb{G})$ of inverse temperature $-1$ such that the restriction of $\chi|_{C^*(G_N)}$ to $C^*(G_N)$ is of norm $1$ for every $N = 1,2,\dots.$
\end{definition}

If the inductive system $\mathbb{G}$ comes from ordinary compact groups (or even compact Kac algebras), then the dual scaling actions $\widehat{\tau}^{G_N}$ are all trivial and hence so is the Stratila--Voiculescu AF-flow $\widehat{\tau}^{\mathbb{G}}$, implying that any quantized characters are tracial in the case. It is quite natural to assume that the restriction of $\chi|_{C^*(G_N)}$ to $C^*(G_N)$ is of norm $1$ for every $N=1,2,\dots$ in the above. In fact, this assumption clearly holds true in the case of characters of ordinary compact groups. Conversely, if a tracial state satisfies this assumption, then the norm of the restriction of the state to the closed ideal $J_N$ generated by $\bigcup_{k\geq N+1}C^*(G_k)$ in $\mathfrak{A}(\mathbb{G})$ is equal to $1$ for every $N=0,1,\dots$, and such a trace comes from some character on the inductive limit group, see the proof of \cite[Lemma 2.2(1)]{EnomotoIzumi}. 

\medskip
We denote the set of $\hat{\tau}^\mathbb{G}$-KMS states of inverse temperature $-1$ by $\mathrm{KMS}(\mathfrak{A}(\mathbb{G}),\hat{\tau}^\mathbb{G})$ and the set of quantized characters by $\mathrm{Ch}(\mathbb{G})$. The former is equipped with the topology of weak$^*$ convergence and the latter is equipped with the relative topology. Since the set $\mathrm{KMS}(\mathfrak{A}(\mathbb{G}),\hat{\tau}^\mathbb{G})$ is a Choquet simplex, see e.g.\ \cite[Thoerem 5.3.30(2)]{BratteliRobinson2}, the next proposition follows.

\begin{proposition}\label{Prop:IntegralRepresentation}
All of extremal points in $\mathrm{Ch}(\mathbb{G})$ are also extremal in $\mathrm{KMS}(\mathfrak{A}(\mathbb{G}),\hat{\tau}^\mathbb{G}),$ that is, \[\mathrm{ex}(\mathrm{Ch}(\mathbb{G}))=\mathrm{ex}(\mathrm{KMS}(\mathfrak{A}(\mathbb{G}),\hat{\tau}^\mathbb{G}))\cap\mathrm{Ch}(\mathbb{G}).\] Furthermore, for any quantized character $\chi\in\mathrm{Ch}(\mathbb{G})$ there exists a unique probability measure $M$ on the set of extremal points $\mathrm{ex}(\mathrm{Ch}(\mathbb{G}))$ such that \[\chi=\int_{\mathrm{ex}(\mathrm{Ch}(\mathbb{G}))}\epsilon\,dM(\epsilon),\] that is, \[\chi(a)=\int_{\mathrm{ex}(\mathrm{Ch}(\mathbb{G}))}\epsilon(a)\,dM(\epsilon)\] for any $a\in\mathfrak{A}(\mathbb{G})$.
\end{proposition}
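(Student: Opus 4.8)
The plan is to recognize $\mathrm{Ch}(\mathbb{G})$ as a (generally non-closed) \emph{face} of the Choquet simplex $\mathrm{KMS}(\mathfrak{A}(\mathbb{G}),\widehat{\tau}^\mathbb{G})$ and then to run the standard Choquet machinery, controlling the representing measure on this face by a monotone convergence argument. First I would record, for each $N$, the function $f_N(\chi):=\Vert\chi|_{C^*(G_N)}\Vert$ on $\mathrm{KMS}(\mathfrak{A}(\mathbb{G}),\widehat{\tau}^\mathbb{G})$. Fixing an increasing countable approximate unit $(e^{(N)}_n)_n$ for the separable $C^*$-algebra $C^*(G_N)$, one has $f_N(\chi)=\lim_n\chi(e^{(N)}_n)=\sup_n\chi(e^{(N)}_n)$, so $f_N$ is an increasing supremum of weak$^*$-continuous affine functions: it is weak$^*$-lower semicontinuous, convex, Borel, and $0\le f_N\le 1$. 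Using the common approximate unit, whenever $\chi=t\chi_1+(1-t)\chi_2$ with $t\in(0,1)$ and $\chi_1,\chi_2$ KMS states, passing to the limit in $\chi(e^{(N)}_n)=t\chi_1(e^{(N)}_n)+(1-t)\chi_2(e^{(N)}_n)$ gives $f_N(\chi)=tf_N(\chi_1)+(1-t)f_N(\chi_2)$; since $f_N\le1$ this equals $1$ iff $f_N(\chi_1)=f_N(\chi_2)=1$. Hence $\mathrm{Ch}(\mathbb{G})=\bigcap_N\{f_N=1\}$ is convex, Borel, and a face of $\mathrm{KMS}(\mathfrak{A}(\mathbb{G}),\widehat{\tau}^\mathbb{G})$.

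The first assertion is then purely order-theoretic. If $\chi\in\mathrm{ex}(\mathrm{KMS}(\mathfrak{A}(\mathbb{G}),\widehat{\tau}^\mathbb{G}))\cap\mathrm{Ch}(\mathbb{G})$, it is a fortiori extremal in the smaller convex set $\mathrm{Ch}(\mathbb{G})$. Conversely, if $\chi\in\mathrm{ex}(\mathrm{Ch}(\mathbb{G}))$ and $\chi=t\chi_1+(1-t)\chi_2$ in $\mathrm{KMS}(\mathfrak{A}(\mathbb{G}),\widehat{\tau}^\mathbb{G})$ with $t\in(0,1)$, the face property forces $\chi_1,\chi_2\in\mathrm{Ch}(\mathbb{G})$, whence $\chi_1=\chi_2=\chi$ by extremality in the face; so $\chi$ is extremal in the whole simplex. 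This proves $\mathrm{ex}(\mathrm{Ch}(\mathbb{G}))=\mathrm{ex}(\mathrm{KMS}(\mathfrak{A}(\mathbb{G}),\widehat{\tau}^\mathbb{G}))\cap\mathrm{Ch}(\mathbb{G})$.

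For the integral representation, note that countability of each $\widehat{G}_N$ makes $C^*(G_N)$ and hence $\mathfrak{A}(\mathbb{G})$ separable, so $\mathrm{KMS}(\mathfrak{A}(\mathbb{G}),\widehat{\tau}^\mathbb{G})$ is a \emph{metrizable} Choquet simplex and $\mathrm{ex}(\mathrm{KMS}(\mathfrak{A}(\mathbb{G}),\widehat{\tau}^\mathbb{G}))$ is $G_\delta$. By Choquet's theorem every $\chi\in\mathrm{Ch}(\mathbb{G})$ admits a unique probability measure $M$ on the simplex with barycenter $\chi$ and $M(\mathrm{ex}(\mathrm{KMS}(\mathfrak{A}(\mathbb{G}),\widehat{\tau}^\mathbb{G})))=1$. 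To see that $M$ lives on the face, I would compute, for each $N$, by monotone convergence together with the barycenter identity for the affine functions $\epsilon\mapsto\epsilon(e^{(N)}_n)$, that $\int f_N\,dM=\lim_n\int\epsilon(e^{(N)}_n)\,dM(\epsilon)=\lim_n\chi(e^{(N)}_n)=f_N(\chi)=1$. As $0\le f_N\le 1$, this forces $f_N=1$ holding $M$-almost everywhere; intersecting over the countably many $N$ gives $M(\mathrm{Ch}(\mathbb{G}))=1$, and therefore $M(\mathrm{ex}(\mathrm{Ch}(\mathbb{G})))=M(\mathrm{ex}(\mathrm{KMS}(\mathfrak{A}(\mathbb{G}),\widehat{\tau}^\mathbb{G}))\cap\mathrm{Ch}(\mathbb{G}))=1$. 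Uniqueness of $M$ as a measure on $\mathrm{ex}(\mathrm{Ch}(\mathbb{G}))$ then follows from its uniqueness as a boundary measure on the simplex, since $\mathrm{ex}(\mathrm{Ch}(\mathbb{G}))\subset\mathrm{ex}(\mathrm{KMS}(\mathfrak{A}(\mathbb{G}),\widehat{\tau}^\mathbb{G}))$.

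The main obstacle is precisely that $\mathrm{Ch}(\mathbb{G})$ need not be weak$^*$-closed (indeed not compact, as already remarked for a single CQG), so one cannot simply invoke ``a closed face of a simplex is a simplex.'' The point that makes the argument go through is that the normalization condition is cut out by the countably many lower-semicontinuous convex functions $f_N$ that are bounded above by their value $1$ at the barycenter — exactly the situation in which Jensen's inequality, here in the sharp form of plain monotone convergence, pins the representing measure to the face. I expect the only genuinely delicate point to be the verification that $f_N$ equals $\sup_n\chi(e^{(N)}_n)$ along a \emph{single} increasing approximate unit, which is what legitimizes both the additivity of $f_N$ used for the face property and the interchange of limit and integral above.
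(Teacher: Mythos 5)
Your proof is correct, and it is essentially a fully written-out version of the argument the paper only cites: the paper disposes of the first claim by pointing to the analogous face argument in Enomoto--Izumi and of the second by invoking the Choquet--Meyer theorem, which is exactly the route you take (realizing $\mathrm{Ch}(\mathbb{G})$ as a Borel face cut out by the countably many lower semicontinuous affine functions $f_N$, then pinning the unique maximal representing measure to that face via the barycenter identity and monotone convergence). The details you supply — the affineness of $f_N$ along a single increasing approximate unit, metrizability of the simplex from countability of the $\widehat{G}_N$, and the $M$-a.e.\ argument forcing $M(\mathrm{Ch}(\mathbb{G}))=1$ — are all sound and are precisely what the paper's references are meant to cover.
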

\begin{proof}
The first statement is proved similarly to \cite[Lemma 2.2 (2)]{EnomotoIzumi} and the second statement is proved by Choquet--Mayer theorem, see \cite{Phelps:ChoquetTheorem}.
\end{proof}

For any $N=1,2,\dots$ and any $\pi\in\widehat{G}_N$, by Lemma \ref{lemma:dualscalingaction_functional}, we have $\widehat{\tau}^{G_N}_t|_{B(\mathcal{H}_\pi)}=\mathrm{Ad}(F_\pi^{\sqrt{-1}t})$. Thus, the state $\chi_\pi$ on $W^*(G_N)$ defined to be $\chi_\pi(x):=\mathrm{Tr}(F_\pi p_\pi x)/\mathrm{Tr}(F_\pi)$ is a KMS state on $W^*(G_N)$ for the action $\widehat{\tau}^{G_N}\colon\mathbb{R}\curvearrowright W^*(G_N)$, where $p_\pi$ is the projection onto $\mathcal{H_\pi}$. See \cite[Example 5.3.31]{BratteliRobinson2}. The following lemma is necessary in the next section and immediately follows from Lemma \ref{Lemma:decom}. 

\begin{lemma}\label{Decom}
For any $\chi\in\mathrm{KMS}(\mathfrak{A}(\mathbb{G}),\hat{\tau}^\mathbb{G})$ the restriction of $\chi$ to $C^*(G_N)$ is decomposed as \[\chi|_{C^*(G_N)}=\sum_{\pi\in\widehat{G}_N}c_\pi\chi_\pi,\quad c_\pi=\chi(p_\pi).\]
\end{lemma}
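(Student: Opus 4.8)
The plan is to restrict $\chi$ to the sub-$C^*$-algebra $C^*(G_N) \subset \mathfrak{A}(\mathbb{G})$ and then to run (a mild variant of) the argument proving Lemma~\ref{Lemma:decom}.

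First I would record that $C^*(G_N)$ is globally invariant under the flow. By the very construction of the Stratila--Voiculescu AF-flow as (the restriction of) the inductive limit $\varinjlim_N \widehat{\tau}^{G_N}$, whose compatibility with the connecting maps is guaranteed by Proposition~\ref{P4.2}, we have $\widehat{\tau}^{\mathbb{G}}(C^*(G_N)) = C^*(G_N)$ with $\widehat{\tau}^{\mathbb{G}}|_{C^*(G_N)} = \widehat{\tau}^{G_N}$. Moreover the block elements of $\mathbb{C}[G_N] = \bigodot_{\pi\in\widehat{G}_N} B(\mathcal{H}_\pi)$ are entire analytic for $\widehat{\tau}^{G_N}$, since $\widehat{\tau}^{G_N}_t|_{B(\mathcal{H}_\pi)} = \mathrm{Ad}(F_\pi^{\sqrt{-1}t})$ acts by a single matrix exponential on each finite-dimensional block, and they are norm-dense in $C^*(G_N)$. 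Consequently the analytic functions witnessing the KMS condition for $\chi$ stay inside $C^*(G_N)$, and $\chi|_{C^*(G_N)}$ is a positive $\widehat{\tau}^{G_N}$-KMS functional of inverse temperature $-1$.

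Next I would apply Lemma~\ref{Lemma:decom}. The only point to watch is that $\chi|_{C^*(G_N)}$ need not have norm $1$, because $\chi$ is an arbitrary member of $\mathrm{KMS}(\mathfrak{A}(\mathbb{G}),\widehat{\tau}^{\mathbb{G}})$ rather than a quantized character of the system $\mathbb{G}$. However, the proof of Lemma~\ref{Lemma:decom} uses only positivity together with the block structure $\Vert x\Vert = \sup_\pi\Vert x_\pi\Vert$ of $C^*(G_N)$; the unit-norm hypothesis enters solely through the normalization $\sum_\pi c_\pi = 1$. Running that argument for the positive functional $\chi|_{C^*(G_N)}$ hence produces the block decomposition
\[
\chi|_{C^*(G_N)} = \sum_{\pi\in\widehat{G}_N} c_\pi\,\chi_\pi,\qquad c_\pi = \Vert \chi\circ\iota_\pi\Vert \geq 0,
\]
where each $\chi_\pi$ is the unique $\widehat{\tau}^{G_N}|_{B(\mathcal{H}_\pi)}$-KMS state on the matrix block $B(\mathcal{H}_\pi)$ (uniqueness of KMS states on a matrix algebra), and where now $\sum_\pi c_\pi = \Vert\chi|_{C^*(G_N)}\Vert$ in place of $=1$.

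It remains to identify the coefficients. The restriction $\chi\circ\iota_\pi$ is a positive functional on the unital algebra $B(\mathcal{H}_\pi)$, whose unit is sent by $\iota_\pi$ to the block projection $p_\pi \in \mathbb{C}[G_N]$; therefore $c_\pi = \Vert\chi\circ\iota_\pi\Vert = (\chi\circ\iota_\pi)(1_{B(\mathcal{H}_\pi)}) = \chi(p_\pi)$, which is the asserted formula. (Equivalently, evaluate the decomposition at $p_\pi$ and use $\chi_\rho(p_\pi) = \delta_{\rho,\pi}\,\mathrm{Tr}(F_\pi)/\dim_q(\pi) = \delta_{\rho,\pi}$.) I do not expect a genuine obstacle: the content is bookkeeping, and the two points deserving a line of care are the passage of the KMS condition to the invariant subalgebra $C^*(G_N)$ and the observation that Lemma~\ref{Lemma:decom} remains valid for a positive KMS functional that is not necessarily of norm $1$.
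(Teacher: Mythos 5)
Your proposal is correct and matches the paper's route: the paper simply declares that the lemma ``immediately follows from Lemma~\ref{Lemma:decom}'', and your argument is exactly that deduction spelled out --- restrict to the $\widehat{\tau}^{\mathbb{G}}$-invariant subalgebra $C^*(G_N)$, rerun the block decomposition of Lemma~\ref{Lemma:decom} for a positive KMS functional not necessarily of norm $1$, and identify $c_\pi=\Vert\chi\circ\iota_\pi\Vert=\chi(p_\pi)$ by positivity. The two points you flag (invariance of $C^*(G_N)$ under the flow via Proposition~\ref{P4.2}, and that the unit-norm hypothesis in Lemma~\ref{Lemma:decom} is used only for the normalization $\sum_\pi c_\pi=1$) are precisely the details the paper leaves implicit, and you handle them correctly.
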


\subsection{Vershik-Kerov's ergodic method}\label{chap:DynamicalSystems}
In this section, we will establish the ergodic method for quantized characters of inductive system of CQGs. Firstly, we construct the (weighted) branching graph for a given inductive system. Then, we investigate dynamical systems on the paths on a given weighted branching graph. The goal of this section is Corollary \ref{Cor6.2}.

\subsubsection{Branching graphs and weighted central probability measures, weighted coherent systems}
We recall the concept of branching graphs (or Bratteli diagrams) and introduce a certain class of probability measures on their path spaces.

\begin{definition}
Let $\mathcal{G}:=(V,E,s,r)$ be a directed graph, that is, $V$ is the vertex set, $E$ is the edge set and $s,r:E\to V$ are the source and the range maps. The graph $\mathcal{G}$ is called a \emph{branching graph} if $V=\bigsqcup _{N=0}^\infty V_N,E=\bigsqcup_{N=1}^\infty E_N$, disjoint unions and $\mathcal{G}$ satisfies the following four conditions:
\begin{itemize} 
\item[(1)] $V_0$ consists of only one element denoted by $*$,
\item[(2)] $V_N$ is a countable set for every $N=1,2,\dots,$
\item[(3)] $|r^{-1}(\{v\})|<\infty$ for every $v\in V,$
\item[(4)] $s(E_N)=V_{N-1}$ and $r(E_N)=V_N$ for every $N=1,2,\dots$.
\end{itemize}
\end{definition}

Typical examples of branching graphs are Bratteli diagrams as well as the following example:
\begin{example}[The Gelfand--Tsetlin graph]\label{example:GelfandTsetlinGraph}
Our main example of branching graph is the Gelfand-Tsetlin graph $\mathbb{GT}$. This graph is associated with the branching rule of (quantum) unitary groups $U(N)$ ($U_q(N)$), see \cite{Zelobenko}, \cite{NoumiYamadaMimachi}. Let $\mathrm{Sign}_N$ be the set of \emph{signatures} of size $N$, that is, \[\mathrm{Sign}_N:=\{\nu=(\nu_1,\dots,\nu_N)\in\mathbb{Z}^N:\nu_1\geq\cdots\geq\nu_N\}.\] We set $\mathrm{Sign}_0:=\{*\}.$ For each $N\geq1,$ two signatures $\mu\in \mathrm{Sign}_N$ and $\nu\in \mathrm{Sign}_{N+1}$ are joined by an edge if and only if \[\nu_1\geq\mu_1\geq\nu_2\geq\cdots\geq\nu_N\geq\mu_N\geq\nu_{N+1}.\] We write $\mu\prec\nu$ in this case. Moreover, we assume that $*\in\mathrm{Sign}_0$ is joined to each vertex in $\mathrm{Sign}_1$ by only one edge. Define \[E^{GT}_N:=\{[\mu,\nu] : \mathrm{Sign}_{N-1}\ni\mu\prec\nu\in\mathrm{Sign}_N\}\] for each $N\geq1$ and \[\mathrm{Sign}:=\bigsqcup_{N\geq0}\mathrm{Sign}_N,\quad E^{GT}:=\bigsqcup_{N\geq1}E^{GT}_N.\] The source and the range maps $s,r:E^{GT}\to \mathrm{Sign}$ are defined as the projections onto the first and the second components, respectively. In this way, we have obtained the branching graph $\mathbb{GT}:=(\mathrm{Sign},E^{GT},s,r)$, called the \emph{Gelfand-Tsetlin graph}. Note that the number of the paths from $*$ to $\nu\in \mathrm{Sign}_N$ is exactly the dimension of the irreducible representation of $U(N)$ with label $\nu$. 
\end{example}

\begin{definition}[The branching graph associated with an inductive system of CQGs]
For each inductive system $\mathbb{G}=(G_N,\theta_N)_{N=0}^\infty$ with $G_0=(\mathbb{C},\mathrm{id})$ we can construct the branching graph arising from the branching rule of the irreducible representations in the following way: The vertex set $V_N,N=1,2,\dots,$ is exactly the unitary dual $\widehat{G}_N$. We set $V_0=\{*\}$, consisting only of the trivial representation. For any pair $(\rho,\pi)\in\widehat{G}_{N-1}\times\widehat{G}_N$, we define $m_\pi(\rho)$ to be the multiplicity of $\rho$ in $\pi|_{G_{N-1}}$. Then the edge set $E_N,N=1,2,\dots,$ is defined to be $\{[\rho,l,\pi]:1\leq l\leq m_\pi(\rho),\widehat{G}_{N-1}\ni\rho\prec\pi\in\widehat{G}_N\}$. The source and the range maps $s,r\colon E:=\bigsqcup_{N\geq1}E_N\to V:=\bigsqcup_{N\geq0}V_N$ are the projections to the first and the third coordinates, respectively. Then, the quadlet $(V,E,s,r)$ is a branching graph. When all the multiplicities $m_\pi(\rho)$ are equal to $1$, we simply denote each edge by $[\rho,\pi]$. 
\end{definition}

We remark that the Gelfand--Tsetlin graph is the branching graph associated with the inductive system of $U_q(N)$.

\medskip
For any $K<N$ a sequence $(t_n)_{n=K}^N\in\prod_{n=K}^NE_n$ of edges is called a \emph{path} on a branching graph $\mathcal{G}$ if $r(t_n)=s(t_{n+1})$ for every $n=K,\dots,N-1.$ When $N<\infty$, the path is called a finite path; otherwise it is called an infinite path. For any $u\in V_K,v\in V_N$ with $K<N$ we denote by $\Omega(u,v)$ the set of all (finite) paths from $u$ to $v$, that is, the source of the first edge is $u$ and the range of the final edge is $v$. We define $\dim(u,v):=|\Omega(u,v)|$, the number of elements of $\Omega(u,v)$, called the \emph{relative dimension} of $u$ with respect to $v$. When $u=*\in V_0$, we call the \emph{dimension} of $v$ and denote it by $\dim(v)$. When the branching graph $\mathcal{G}$ associated with an inductive system of CQGs, the dimension of each vertex is nothing but the dimension of the corresponding irreducible representation. 

\medskip
We introduce the notion of weighted branching graphs. Let $\mathcal{G}=(V,E,s,r)$ be a branching graph. A \emph{weight function} is a function $w\colon E\to(0,\infty)$, and the branching graph $\mathcal{G}$ equipped with a weight function $w$ is called a \emph{weighted branching graph}. For any finite path $t=(t_n)_{n=K}^N,$ its weight $w(t)$ is defined to be $w(t_K)w(t_{K+1})\cdots w(t_N)$ and the \emph{weighted dimension} of each vertex $v$ is defined to be $\sum_{t\in\Omega(*,v)}w(t)$, denoted by $w$-$\dim(v)$. Moreover, for any $u\in V_K,v\in V_N$ with $K<N$ the relative weighted dimension from $u$ to $v$ is similarly defined to be $\sum_{\omega\in\Omega(u,v)}w(\omega)$ and denoted by $w$-$\dim(u,v).$ 

\begin{definition}[The weighted branching graph associated with an inductive system of CQGs]
On the branching graph $\mathcal{G}$ associated with an inductive system $\mathbb{G}$ of CQGs, we can construct a canonical weight function as follows. By Remark \ref{R4.1} we can inductively select the sequence of families of representatives, say $\{U_\pi\}_{\pi\in\widehat{G}_N},N=1,2,\dots$ with intertwiners $S_{\rho,l,\pi}$ with $(\rho,\pi)\in\widehat{G}_{N-1}\times\widehat{G}_N,\rho\prec\pi,1\leq l\leq m_\pi(\rho),$ in such a way that \[W_\pi=\sum_{\rho\in\widehat{G}_{N-1};\rho\prec\pi}\sum_{l=1}^{m_\pi(\rho)}w(\rho,l,\pi)S_{\rho,l,\pi}S_{\rho,l,\pi}^*,\quad \pi\in\widehat{G}_N\] with $w(\rho,l,\pi)$ not decreasing in $l$ for every $N=1,2,\dots$. These $w(\rho,l,\pi)$ define a weight function $w$, i.e., $w([\rho,l,\pi]):=w(\rho,l,\pi)$. In this way, we can define a canonical weight function $w$ on $\mathcal{G}$. Since the list $w(\rho,l,\pi)$ is nothing but the eigenvalues of $W_\pi$, this weight function is essentially independent of the choice of intertwiners $S_{\rho,l,\pi}$.  
\end{definition}

\begin{remark}\label{R6.1}
By Formula \eqref{Eq:Density}, we have \[w([\rho,l,\pi])=\frac{\mathrm{Tr}(S^*_{\pi,l,\rho}F_\pi S_{\pi,l,\rho})}{\mathrm{Tr}(F_\rho)}\] for every edge $[\rho,l,\pi]\in E_N,N=1,2,\dots$. Let $S_e:=S_{\rho,l,\pi}$ for every edge $e=[\rho,l,\pi]$ and $S_t:=S_{t_N}S_{t_{N-1}}\cdots S_{t_1}$ for every finite path $t=(t_n)_{n=1}^N$. By Formula \eqref{Eq:Density} again we have \begin{equation}\label{Eq:R6.1}F_\pi=\sum_{t\in\Omega(*,\pi)}w(t)S_tS_t^*.\end{equation} The $S_tS_u^*, t,u\in\Omega(*,\pi)$ form a system of matrix units on $B(\mathcal{H}_\pi)$. 
\end{remark}

\medskip
Let $\Omega:=\Omega(\mathcal{G})$ be the space of infinite paths starting at $*$ on a branching graph $\mathcal{G}$. For any finite path $t:=(t_n)_{n=1}^N$ (starting at $*$), \emph{the cylinder set} $C_t$ is defined by \[C_t:=\{(\omega_n)_{n=1}^\infty\in\Omega:\omega_n=t_n,n=1,\dots,N\}.\] Let $\mathcal{F}$ be the $\sigma$-algebra generated by the collection of cylinder sets. It is easy to see that the measurable space $(\Omega,\mathcal{F})$ is a unique standard Borel space. Following Gorin's definition, see \cite[Section 1.2]{Gorin:qcentralmeasure}, a probability measure $P$ on $(\Omega,\mathcal{F})$ is called \emph{$w$-central} if the following holds: For any finite path $t$ starting at $*$ and terminating at $v\in V_N,N=1,2,\dots$ \[\frac{P(C_t)}{w(t)}=\frac{P(X_N=v)}{w\mathchar`-\dim(v)},\] where the measurable function $X_N:(\Omega,\mathcal{F})\to V_N$ is defined by $X_N(\omega):=r(\omega_N).$ The set of $w$-central probability measures is denoted by $\mathrm{Cent}(\mathcal{G},w)$, and it is clearly a convex set. A sequence of probability measures $P_N$ on $V_N$ with $N=1,2,\dots$ is called a $w$-\emph{coherent system} if the following coherent relation holds: \begin{equation}\label{CoherentRelation}P_N(v)=\sum_{v'\in V_{N+1}}\Bigg(\sum_{e\in\Omega(v,v')}w(e)\Bigg)\frac{w\mathchar`-\dim(v)}{w\mathchar`-\dim(v')}P_{N+1}(v')\end{equation} for any $v\in V_N$ and $N=1,2,\dots.$ The set of $w$-coherent systems is denoted by $\mathrm{Coh}(\mathcal{G},w)$, and it is also a convex set. When a weighted branching graph $(\mathcal{G},w)$ is associated with an inductive system $\mathbb{G}$, we denote the set $\mathrm{Coh}(\mathcal{G},w)$ by $\mathrm{Coh}(\mathbb{G})$, also the set $\mathrm{Cent}(\mathcal{G},w)$ is denoted by $\mathrm{Cent}(\mathbb{G})$. There exists an affine bijection between $\mathrm{Coh}(\mathcal{G},w)$ and $\mathrm{Cent}(\mathcal{G},w)$. The bijection is simply given by $P_N(v)=P(X_N=v)$ for any $v\in V_N,N=1,2,\dots$. We consider the topology of weak convergence on $\mathrm{Cent}(\mathcal{G},w)$ and the topology of component-wise weak convergence on $\mathrm{Coh}(\mathcal{G},w)$. The next proposition trivially holds true.

\begin{proposition}\label{P6.1}
The convex sets $\mathrm{Cent}(\mathcal{G},w)$ and $\mathrm{Coh}(\mathcal{G},w)$ are (affine-)homeomorphic by the correspondence $P\mapsto(P_N)_{N=1}^\infty$ with $P_N(v)=P(X_N=v)$ for any $v\in V_N,N=1,2,\dots$.
\end{proposition}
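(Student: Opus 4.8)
The plan is to treat the affine bijection $\Phi\colon P\mapsto(P_N)_{N=1}^\infty$, $P_N(v)=P(X_N=v)$, as already in hand (it was exhibited just before the statement), so that the only remaining content is that $\Phi$ is a homeomorphism for the stated topologies. First I would record what the two topologies mean concretely: on each countable discrete set $V_N$, weak convergence of probability measures coincides with pointwise convergence of the mass functions, since pointwise convergence together with the fixed total mass $1$ forces total-variation convergence (Scheff\'e's lemma) and the converse is trivial. Hence ``component-wise weak convergence'' on $\mathrm{Coh}(\mathcal{G},w)$ means precisely $P_N^{(k)}(v)\to P_N(v)$ for every $N$ and every $v\in V_N$, and affineness of $\Phi$ is immediate since it is the restriction of a linear map. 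The plan is then to establish the two continuity statements separately.

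For the continuity of $\Phi$, fix $v\in V_N$. Because $\dim(v)=|\Omega(*,v)|<\infty$ by condition (3), the event $\{X_N=v\}=\bigsqcup_{t\in\Omega(*,v)}C_t$ is a finite union of cylinder sets, hence clopen in the path space $\Omega$, so its indicator lies in $C_b(\Omega)$. Therefore weak convergence $P^{(k)}\to P$ forces $P_N^{(k)}(v)=P^{(k)}(X_N=v)\to P(X_N=v)=P_N(v)$, which is exactly component-wise weak convergence; thus $\Phi$ is continuous.

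The main step is the continuity of $\Phi^{-1}$. Suppose $(P_N^{(k)})\to(P_N)$ component-wise and set $P^{(k)}=\Phi^{-1}((P_N^{(k)}))$, $P=\Phi^{-1}((P_N))$. The $w$-central identity $P^{(k)}(C_t)=\frac{w(t)}{w\text{-}\dim(v)}\,P_N^{(k)}(v)$ (for $t$ terminating at $v\in V_N$) at once gives $P^{(k)}(C_t)\to P(C_t)$ for every cylinder set $C_t$. To upgrade this cylinder-wise convergence to weak convergence I would prove that $\{P^{(k)}\}_k$ is uniformly tight. Pointwise convergence of the mass functions on each $V_N$, together with total mass $1$, yields by Scheff\'e that for every $N$ and $\varepsilon>0$ there is a finite $F_N\subset V_N$ with $\sup_k P^{(k)}(X_N\notin F_N)<\varepsilon\,2^{-N}$. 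Setting $K_\varepsilon:=\{\omega\in\Omega:X_N(\omega)\in F_N\text{ for all }N\}$, condition (3) ($|r^{-1}(\{v\})|<\infty$) makes the set of admissible edges at each level finite, so $K_\varepsilon$ is a closed subset of a product of finite sets and hence compact by Tychonoff, while $\sup_k P^{(k)}(\Omega\setminus K_\varepsilon)<\varepsilon$. Prokhorov's theorem then gives relative weak-compactness of $\{P^{(k)}\}$, and since the cylinder sets form a $\pi$-system generating $\mathcal{F}$ and $P^{(k)}(C_t)\to P(C_t)$, every weak subsequential limit agrees with $P$ on all cylinders and therefore equals $P$; thus $P^{(k)}\to P$ weakly.

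I expect the only genuine obstacle to be this last passage from convergence on cylinder sets to weak convergence: cylinder-wise convergence alone does not force a weak limit when mass can escape along the (possibly infinite) levels $V_N$, and the remedy is exactly the uniform-tightness/Prokhorov argument above, in which the finiteness hypothesis $|r^{-1}(\{v\})|<\infty$ is used essentially to make the sets $K_\varepsilon$ compact. Everything else is formal, and combining the two continuity statements with the already-established affine bijection shows that $\Phi$ is an affine homeomorphism, as claimed.
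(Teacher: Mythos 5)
Your proof is correct. The paper itself offers no argument here---it simply declares that ``the next proposition trivially holds true''---so there is no proof to compare against; what you have done is supply the details the author suppressed, and they check out: affineness is immediate, the forward continuity follows because $\{X_N=v\}$ is a finite (by condition (3)) disjoint union of clopen cylinders, and the reverse continuity follows from cylinder-wise convergence (forced by the $w$-centrality identity) upgraded to weak convergence. One remark: the tightness/Prokhorov step, while valid, can be bypassed. In the path-space topology every open set $U$ is a countable \emph{disjoint} union of cylinder sets (take, for each $\omega\in U$, the shortest cylinder around $\omega$ contained in $U$), so cylinder-wise convergence plus Fatou's lemma for series gives $\liminf_k P^{(k)}(U)\geq P(U)$ for all open $U$, which is the portmanteau criterion for weak convergence; this avoids invoking Scheff\'e, uniform tightness, and Prokhorov altogether, and is presumably the sense in which the author regards the statement as trivial. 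Your identification of the topologies (component-wise weak convergence on a countable discrete set being pointwise convergence of mass functions) and your use of $|r^{-1}(\{v\})|<\infty$ to make the sets $K_\varepsilon$ compact are both accurate.
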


\medskip
Finally, we introduce a certain group of measurable transformations on $(\Omega,\mathcal{F})$. For any $v\in V_N,N=1,2,\dots$, the group of all permutations of $\Omega(*,v)$ is denoted by $\mathfrak{S}_v^0$. It is naturally embedded into the measurable transformations on $(\Omega,\mathcal{F})$, that is, for any $\gamma\in\mathfrak{S}_v^0$ and any $\omega=(\omega_n)_n\in\Omega$, we define by \[\gamma(\omega):=\begin{cases}(\gamma(\omega_1,\dots,\omega_N),\omega_{N+1},\dots)&(r(\omega_N)=v)\\\omega&\text{(otherwise).}\end{cases}\]  Let $\mathfrak{S}_v(\Omega)$ be the embedding of $\mathfrak{S}^0_v$ and define $\mathfrak{S}_N(\Omega)$ to be the subgroup of all the measurable transformations on $\Omega$ generated by $\bigcup_{v\in V_N}\mathfrak{S}_v(\Omega)$ which is isomorphic to $\bigoplus_{v\in V_N}\mathfrak{S}_v(\Omega)$ as abstract groups. Trivially, the group $\mathfrak{S}_N(\Omega)$ is a subgroup of $\mathfrak{S}_{N+1}(\Omega)$, and hence we obtain the transformation group $\mathfrak{S}(\Omega):=\varinjlim_{N}\mathfrak{S}_N(\Omega)(=\bigcup_{N\geq1}\mathfrak{S}_N(\Omega))$ on $\Omega$. For any $\gamma\in\mathfrak{S}(\Omega)$, any cylinder set $C$ and any $w$-central probability measure $P$ it follows that $P(C)=0$ if and only if $P(\gamma(C))=0$ from the definition of $w$-central probability measures. Thus $w$-central probability measures are quasi-invariant under the transformation group $\mathfrak{S}(\Omega)$. Later, we will show that when a weighted branching graph arises from an inductive system $\mathbb{G}$ of CQGs, a $w$-central probability measure $P$ is $\mathfrak{S}(\Omega)$-ergodic if and only if $P$ is extremal in $\mathrm{Cent}(\mathbb{G})$ (see Theorem \ref{Theorem:Ergodic_ExtremalKMS}).

\subsubsection{$w$-central probability measures and quantized characters}
For an inductive system $\mathbb{G}=(G_N,\theta_N)_{N=0}^\infty$ of CQGs with $G_0=(\mathbb{C},\mathrm{id})$, we will investigate the relation between the set $\mathrm{Ch}(\mathbb{G})$ of quantized characters and the set $\mathrm{Cent}(\mathbb{G})$ of $w$-central probability measures (or the set $\mathrm{Coh}(\mathbb{G})$ of $w$-coherent systems).

\begin{proposition}\label{P6.2}
There exists an affine homeomorphism between $\mathrm{Ch}(\mathbb{G})$ and $\mathrm{Coh}(\mathbb{G})$ such that the quantized character $\chi$ corresponding to a $w$-coherent system $(P_N)_N$ is decomposed as \[\chi|_{C^*(G_N)}=\sum_{\pi\in\widehat{G}_N}P_N(\pi)\chi_\pi\] on the $C^*$-subalgebra $C^*(G_N)$. Furthermore, there exists an affine homeomorphism between $\mathrm{Ch}(\mathbb{G})$ and $\mathrm{Cent}(\mathbb{G})$ such that the quantized character $\chi$ corresponding to a $w$-central probability measure $P$ is decomposed as \[\sum_{\pi\in\widehat{G}_N}P(X_N=\pi)\chi_\pi\] on the $C^*$-subalgebra $C^*(G_N),$ where $X_N$ is the measurable function defined by $X_N(\omega)=r(\omega_N)$ for any $\omega=(\omega_n)_{n=1}^\infty\in\Omega$.
\end{proposition}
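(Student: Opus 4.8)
The plan is to build an explicit affine bijection $\mathrm{Ch}(\mathbb{G}) \to \mathrm{Coh}(\mathbb{G})$; the corresponding statement for $\mathrm{Cent}(\mathbb{G})$ then follows by composing with the affine homeomorphism of Proposition \ref{P6.1}. Given $\chi \in \mathrm{Ch}(\mathbb{G})$ I set $P_N(\pi) := \chi(p_\pi)$ for $\pi \in \widehat{G}_N$. By Lemma \ref{Decom} we have $\chi|_{C^*(G_N)} = \sum_{\pi \in \widehat{G}_N} \chi(p_\pi)\chi_\pi$, and since $\Vert\chi|_{C^*(G_N)}\Vert = 1$ (the defining condition of a quantized character) this restriction has total mass $\sum_\pi \chi(p_\pi) = 1$, so each $(P_N(\pi))_\pi$ is a probability measure on $V_N = \widehat{G}_N$. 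The content of the proposition is that $\chi \mapsto (P_N)_N$ lands in $\mathrm{Coh}(\mathbb{G})$, is bijective, and is an affine homeomorphism.

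The key device is the canonical \emph{normal} extension of each level restriction: put $\widehat{\chi}_N := \sum_{\pi \in \widehat{G}_N} \chi(p_\pi)\chi_\pi$, now regarded as a state on the whole von Neumann algebra $W^*(G_N) = \bigoplus_{\pi} B(\mathcal{H}_\pi)$, which is normal because it is an $\ell^1$-convex combination of the normal states $\chi_\pi$. Let $\mathfrak{A}_N \subseteq W^*(G_N)$ be the image in $\mathfrak{M}(\mathbb{G})$ of the unital $*$-algebra generated by $C^*(G_1),\dots,C^*(G_N)$, so that $\mathfrak{A}_N \subseteq \mathfrak{A}_{N+1}$, $\mathfrak{A}(\mathbb{G}) = \overline{\bigcup_N \mathfrak{A}_N}$, and $C^*(G_N)$ is exactly the ideal of $\mathfrak{A}_N$ consisting of its elements in the $c_0$-direct sum. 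The crucial lemma I would isolate is that \emph{$\chi$ and $\widehat{\chi}_N$ agree on all of $\mathfrak{A}_N$}: both are states on $\mathfrak{A}_N$ whose restrictions to the ideal $C^*(G_N)$ coincide (Lemma \ref{Decom}) and are of norm $1$, and the standard approximate-unit estimate $|\phi(a(1-u_\lambda))|^2 \le \Vert a\Vert^2\,\phi((1-u_\lambda)^2)$ together with $\phi(u_\lambda)\to\Vert\phi|_{C^*(G_N)}\Vert = 1$ shows that a state of norm $1$ on the ideal is uniquely determined by its restriction there, forcing $\chi|_{\mathfrak{A}_N} = \widehat{\chi}_N|_{\mathfrak{A}_N}$.

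Granting this, coherence is a direct computation. The element $p_\pi \in C^*(G_N)$ equals its image $\Theta_N(p_\pi) \in \mathfrak{A}_{N+1}$ in $\mathfrak{M}(\mathbb{G})$, and by \eqref{eq:explicit_mapping} this image is the strongly convergent sum $\sum_{\pi' \succ \pi}\sum_l S_{\pi,l,\pi'}S_{\pi,l,\pi'}^*$. Applying the lemma at level $N+1$ and the normality of $\widehat{\chi}_{N+1}$ gives $\chi(p_\pi) = \widehat{\chi}_{N+1}(\Theta_N(p_\pi)) = \sum_{\pi'}P_{N+1}(\pi')\sum_l \chi_{\pi'}(S_{\pi,l,\pi'}S_{\pi,l,\pi'}^*)$; using Remark \ref{R6.1}, namely $\mathrm{Tr}(F_{\pi'}S_{\pi,l,\pi'}S_{\pi,l,\pi'}^*) = w([\pi,l,\pi'])\dim_q(\pi)$ and the identity $\dim_q(\pi) = w\text{-}\dim(\pi)$ coming from \eqref{Eq:R6.1}, this is exactly the coherence relation \eqref{CoherentRelation}. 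For the inverse, I would start from $(P_N)_N \in \mathrm{Coh}(\mathbb{G})$, form $\widehat{\chi}_N := \sum_\pi P_N(\pi)\chi_\pi$ on $W^*(G_N)$, and observe that reversing this computation shows coherence is equivalent to the compatibility $\widehat{\chi}_{N+1}\circ\Theta_N = \widehat{\chi}_N$; the universal property of the $C^*$-inductive limit $\mathfrak{M}(\mathbb{G})$ then assembles the compatible family into a state whose restriction $\chi$ to $\mathfrak{A}(\mathbb{G})$ has $\chi|_{C^*(G_N)} = \sum_\pi P_N(\pi)\chi_\pi$. Since $\widehat{\tau}^{\mathbb{G}}$ acts on $B(\mathcal{H}_\pi)$ as $\mathrm{Ad}(F_\pi^{\sqrt{-1}t})$ with associated Gibbs state $\chi_\pi \propto \mathrm{Tr}(F_\pi\,\cdot\,)$, the state $\chi$ is $\widehat{\tau}^{\mathbb{G}}$-KMS of inverse temperature $-1$ with norm-$1$ level restrictions, so $\chi \in \mathrm{Ch}(\mathbb{G})$. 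The two constructions are mutually inverse because $\chi(p_\pi) = \widehat{\chi}_N(p_\pi) = P_N(\pi)$ in both directions, and the lemma identifies $\chi$ with the family $(\widehat{\chi}_N)$, i.e.\ with $(P_N)_N$; affinity is immediate from linearity of $\chi \mapsto \chi(p_\pi)$, and the two continuity statements follow from weak$^*$-evaluation at the fixed $p_\pi$ and from the finite-sum formula $\chi_P(a) = \sum_\pi P_N(\pi)\chi_\pi(a)$ for $a \in \mathbb{C}[G_N]$ combined with the uniform bound $\Vert\chi_P\Vert = 1$ and density of $\bigcup_N \mathbb{C}[G_N]$.

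I expect the main obstacle to be the isolated lemma, i.e.\ controlling $\chi$ on the part of $\mathfrak{A}_N$ lying outside the ideal $C^*(G_N)$. This is precisely where the infinite branching of $\widehat{G}_N$ for $U_q(N)$ bites: $\Theta_N(p_\pi)$ is an infinite orthogonal sum of projections and does \emph{not} belong to $C^*(G_{N+1})$, so $\chi$ cannot be passed through it term by term without extra input. The norm-$1$ condition built into the definition of a quantized character is exactly what supplies this input, paralleling the corresponding step in Enomoto--Izumi \cite{EnomotoIzumi}.
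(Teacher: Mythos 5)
Your proposal is correct and follows essentially the same route as the paper: the bijection is given by $P_N(\pi)=\chi(p_\pi)$ via Lemma \ref{Decom}, coherence is checked by evaluating against $\Theta_N(p_\pi)$ using Remark \ref{R6.1} and $w$-$\dim(\pi)=\dim_q(\pi)$, and the converse assembles the compatible normal states $\sum_\pi P_N(\pi)\chi_\pi$ into a KMS state on the inductive limit. The one genuine addition is your isolated lemma identifying $\chi|_{\mathfrak{A}_N}$ with the normal extension $\widehat{\chi}_N$ by the approximate-unit estimate: the paper simply writes $\chi_{N+1}(\Theta_N(p_\rho))$ for the normal extension evaluated on an element that in general lies outside $C^*(G_{N+1})$, so your lemma makes explicit (and justifies) a step the paper's proof leaves implicit.
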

\begin{proof}
For a given $w$-coherent system $(P_N)_N$, the KMS state $\chi_N$ on $W^*(G_N)$ is defined to be \[\chi_N:=\sum_{\pi\in\widehat{G}_N}P_N(\pi)\chi_\pi.\] We define $p_\pi$ to be the projection onto $\mathcal{H}_\pi$. Since the net $a(\mathcal{S}):=(a_\pi(\mathcal{S}))_{\pi\in\widehat{G}}$ with a finite subset $\mathcal{S}$ of $\widehat{G}_N$, where $a_\pi(\mathcal{S})$ is defined by \[a_\pi(\mathcal{S}):=\begin{cases}p_\pi&(\pi\in\mathcal{S})\\0&(\pi\not\in\mathcal{S})\end{cases},\] is an approximate unit of $C^*(G_N)$, we have \[\|\chi_N\|=\lim_{\mathcal{S}\to\widehat{G}_N}\chi_N(a(\mathcal{S}))=\lim_{\mathcal{S}\to\widehat{G}_N}\sum_{\pi\in\mathcal{S}}P_N(\pi)=1.\] See e.g.\, \cite[Lemma I.9.5]{Davidson}. For every $x=(x_\rho)_{\rho\in\widehat{G}_N}\in W^*(G_N)$, by Formulas \eqref{Eq:sec4} and \eqref{Eq:Density}, we have \begin{align*}\chi_{N+1}(\Theta_N(x))&=\sum_{\pi\in\widehat{G}_{N+1}}P_{N+1}(\pi)\chi_\pi\Big(\sum_{\rho\in\widehat{G}_N}\sum_{e\in\Omega(\rho,\pi)}S_ex_\rho S_e^*\Big)\\&=\sum_{\pi\in\widehat{G}_{N+1}}P_{N+1}(\pi)\sum_{\rho\in\widehat{G}_N;\rho\prec\pi}\Big(\sum_{e\in\Omega(\rho,\pi)}w(e)\Big)\frac{w\mathchar`-\dim(\rho)}{w\mathchar`-\dim(\pi)}\chi_\rho(x_\rho)\\&=\sum_{\rho\in\widehat{G}_N}\Big(\sum_{\pi\in\widehat{G}_{N+1};\rho\prec\pi}\frac{w\mathchar`-\dim(\rho)}{w\mathchar`-\dim(\pi)}\sum_{e\in\Omega(\rho,\pi)}P_{N+1}(\pi)\Big)\chi_\rho(x_\rho)\\&=\chi_N(x),\end{align*} where we can freely change the order of sums since the above summations absolutely converge. Hence we can construct the state $\chi$ on $\mathfrak{A}(\mathbb{G})$ by $\chi|_{C^*(G_N)}=\chi_N$ on $C^*(G_N)$. Recall $\bigcup_{N\geq0} \mathbb{C}[G_N]$ is a norm-dense in $\mathfrak{A}(\mathbb{G})$ and $\widehat{\tau}^\mathbb{G}$-invariant. By Lemma \ref{lemma:dualscalingaction_functional}, Formulas \eqref{Eq:sec4} and \eqref{Eq:Density}, we have $\chi(x\widehat{\tau}^\mathbb{G}_{-\sqrt{-1}}(y))=\chi(yx)$ for any $x,y\in\bigcup_{N\geq0} \mathbb{C}[G_N]$. Thus, by \cite[Section 5.3.1]{BratteliRobinson2}, $\chi$ is a $\widehat{\tau}^\mathbb{G}$-KMS state. Since the norm of the restriction $\chi$ to $C^*(G_N)$ is equal to $1$ for every $N\geq1$, the KMS state $\chi$ falls in $\mathrm{Ch}(\mathbb{G})$. Thus, we have the desired map from $\mathrm{Coh}(\mathbb{G})$ to $\mathrm{Ch}(\mathbb{G})$. 

Next, we consider the inverse map. By Lemma \ref{Decom}, every quantized character $\chi\in\mathrm{Ch}(\mathbb{G})$ can be decomposed as \[\chi|_{C^*(G_N)}=\sum_{\pi\in\widehat{G}_N}c_\pi\chi_\pi\] on $C^*(G_N)$ for any $N=1,2,\dots$ with non-negative coefficients $c_\pi$. Since the restriction $\chi|_{C^*(G_N)}$ to $C^*(G_N)$ is of norm $1$, we have \[\sum_{\pi\in\widehat{G}_N}c_\pi=\lim_{\mathcal{S}\to\widehat{G}_N}\chi(a(\mathcal{S}))=1,\] that is, the function $P_N$ defined to be $P_N(\pi):=c_\pi$ is a probability measure on $\widehat{G}_N$. It suffices to show that the sequence of probability measures $(P_N)_N$ becomes a $w$-coherent system. Indeed, for any $\rho\in\widehat{G}_{N}$, by Formula \eqref{Eq:Density}, we have  \begin{align*}P_{N}(\rho)&=\chi_{N}(p_\rho)\\&=\chi_{N+1}(\Theta_{N}(p_\rho))\\&=\sum_{\pi\in\widehat{G}_{N+1};\rho\prec\pi}P_{N+1}(\pi)\chi_{\pi}\Big(\sum_{l=1}^{m_\pi(\rho)}S_{\rho,l,\pi}S_{\rho,l,\pi}^*\Big)\\&=\sum_{\pi\in\widehat{G}_{N+1};\rho\prec\pi}\Bigg(\sum_{e\in\Omega(\rho,\pi)}w(e)\Bigg)\frac{w\mathchar`-\dim(\rho)}{w\mathchar`-\dim(\pi)}P_{N+1}(\pi),\end{align*} and thus the sequence $(P_N)_N$ satisfies the coherent relation \eqref{CoherentRelation}. 
\end{proof}

From this correspondence and Proposition \ref{Prop:IntegralRepresentation}, the next unique integral representation theorem for $w$-central probability measures follows.
\begin{theorem}\label{T6}
For any $w$-central probability measure $P\in\mathrm{Cent}(\mathbb{G})$, there exists a unique Borel probability measure $m$ on $\mathrm{Cent}(\mathbb{G})$ supported on the extremal points $\mathrm{ex}(\mathrm{Cent}(\mathbb{G}))$ such that \[P=\int_{\mathrm{ex}(\mathrm{Cent}(\mathbb{G}))}Q\,dm(Q).\] 
\end{theorem}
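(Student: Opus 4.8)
The plan is to transport the abstract Choquet integral decomposition already available on the quantized character side (Proposition~\ref{Prop:IntegralRepresentation}) across the affine homeomorphism established in Proposition~\ref{P6.2}. The point is that Theorem~\ref{T6} is not a fresh analytic statement but rather a corollary of functorial transport of structure: an affine homeomorphism between compact (or at least nice) convex sets automatically carries barycentric decompositions to barycentric decompositions, and maps extremal points to extremal points. So almost all of the real work has already been done, and what remains is to check that the dictionary is compatible with the integral representation.

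Concretely, let $\Phi\colon\mathrm{Cent}(\mathbb{G})\to\mathrm{Ch}(\mathbb{G})$ denote the affine homeomorphism supplied by Proposition~\ref{P6.2} (composed with the one from Proposition~\ref{P6.1} if one prefers to start from $\mathrm{Coh}(\mathbb{G})$). Given $P\in\mathrm{Cent}(\mathbb{G})$, set $\chi:=\Phi(P)\in\mathrm{Ch}(\mathbb{G})$. By Proposition~\ref{Prop:IntegralRepresentation} there is a unique Borel probability measure $M$ on $\mathrm{ex}(\mathrm{Ch}(\mathbb{G}))$ with $\chi=\int_{\mathrm{ex}(\mathrm{Ch}(\mathbb{G}))}\epsilon\,dM(\epsilon)$. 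Because $\Phi$ is a homeomorphism, it restricts to a homeomorphism $\mathrm{ex}(\mathrm{Cent}(\mathbb{G}))\to\mathrm{ex}(\mathrm{Ch}(\mathbb{G}))$; indeed an affine homeomorphism preserves the face structure, so $Q$ is extremal in $\mathrm{Cent}(\mathbb{G})$ precisely when $\Phi(Q)$ is extremal in $\mathrm{Ch}(\mathbb{G})$. I would then define $m:=(\Phi^{-1})_*M$, the pushforward of $M$ along $\Phi^{-1}$, which is a Borel probability measure on $\mathrm{ex}(\mathrm{Cent}(\mathbb{G}))$.

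It then remains to verify two things: that $P=\int Q\,dm(Q)$ and that $m$ is unique. For the barycenter identity, the standard argument is to test against a separating family of affine continuous functionals. The natural family here is $P\mapsto P(X_N=v)$ for $v\in V_N$, $N\geq1$, which separates points of $\mathrm{Cent}(\mathbb{G})$ and generates its topology (this is precisely the content of the affine homeomorphism in Proposition~\ref{P6.1}). For each such functional the identity follows by changing variables under the pushforward and invoking the corresponding affine functional $\chi\mapsto\chi(p_v)$ on the character side, whose integral decomposition is governed by $M$; here the explicit compatibility $\chi|_{C^*(G_N)}=\sum_\pi P(X_N=\pi)\chi_\pi$ from Proposition~\ref{P6.2} is what glues the two pictures together. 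Uniqueness of $m$ is inherited directly from the uniqueness of $M$ in Proposition~\ref{Prop:IntegralRepresentation} together with the bijectivity of the pushforward operation $N\mapsto\Phi_*N$ on Borel probability measures induced by a homeomorphism.

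The main obstacle I anticipate is a measure-theoretic subtlety rather than a conceptual one: $\mathrm{Ch}(\mathbb{G})$ and $\mathrm{Cent}(\mathbb{G})$ need not be compact (the excerpt explicitly warns that quantized characters do not form a compact set in general, since $C^*(G)$ is non-unital), so one must be careful that $\mathrm{ex}(\mathrm{Cent}(\mathbb{G}))$ is a Borel subset of a standard Borel space and that the pushforward $(\Phi^{-1})_*M$ is genuinely a well-defined Borel measure supported there. I would address this by noting that the affine homeomorphism $\Phi$ is in particular a Borel isomorphism between the underlying standard Borel structures, so it carries the Borel $\sigma$-algebra and the extremal boundary faithfully, and that the Choquet--Meyer framework cited in Proposition~\ref{Prop:IntegralRepresentation} already guarantees $M$ is supported on a Borel extremal set; transporting this support along a Borel isomorphism is harmless. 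Once this point is settled the rest is routine, and I would present the proof in two or three lines essentially by citing Propositions~\ref{Prop:IntegralRepresentation}, \ref{P6.1}, and \ref{P6.2}.
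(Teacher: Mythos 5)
Your proposal is correct and is essentially the paper's own argument: the paper deduces Theorem \ref{T6} directly from the affine homeomorphism of Proposition \ref{P6.2} together with the Choquet--Meyer integral representation of Proposition \ref{Prop:IntegralRepresentation}, i.e.\ by transporting the barycentric decomposition of the corresponding quantized character back to $\mathrm{Cent}(\mathbb{G})$. The additional measure-theoretic care you take regarding non-compactness and the pushforward is a reasonable elaboration of what the paper leaves implicit.
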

Here we remark that the theorem was already proved by Gorin, see \cite[Proposition 5.18]{Gorin:qcentralmeasure}. However, we gave a new approach to the unique integral representation theorem. We would like to emphasize that the approach here is quite natural in view of the original work due to Vershik--Kerov \cite{Kerov:book}.

\subsubsection{Krieger constructions and GNS constructions}\label{Krieger_constructions}
We assume that a weighted branching graph $(\mathcal{G},w)$ is associated with an inductive system $\mathbb{G}=(G_N,\theta_N)_N$ of CQGs with $G_0=(\mathbb{C},\mathrm{id}_\mathbb{C})$. In the section, we will show the following theorem:

\begin{theorem}\label{Theorem:Ergodic_ExtremalKMS}
A $w$-central probability measure $P$ is $\mathfrak{S}(\Omega)$-ergodic if and only if the corresponding quantized character $\chi\in\mathrm{Ch}(\mathbb{G})$ is factorial, namely, it is extremal. Therefore, $P$ is $\mathfrak{S}(\Omega)$-ergodic if and only if $P$ is extremal.
\end{theorem}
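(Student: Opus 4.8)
The plan is to reduce the theorem to a single factoriality-versus-ergodicity dichotomy and then to establish that dichotomy by realizing the GNS representation of $\chi$ through a Krieger-type construction on the path space $\Omega$. First I would dispose of the bookkeeping between the various notions of extremality. For KMS states of a fixed flow, extremality in the KMS simplex is equivalent to factoriality of the associated GNS von Neumann algebra (see \cite[Theorem 5.3.30]{BratteliRobinson2}). Combining this with Proposition \ref{Prop:IntegralRepresentation}, a quantized character $\chi\in\mathrm{Ch}(\mathbb{G})$ is extremal if and only if the GNS von Neumann algebra $\pi_\chi(\mathfrak{A}(\mathbb{G}))''$ is a factor. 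Since the correspondence $P\mapsto\chi$ of Proposition \ref{P6.2} is an affine homeomorphism, $P$ is extremal in $\mathrm{Cent}(\mathbb{G})$ if and only if $\chi$ is extremal in $\mathrm{Ch}(\mathbb{G})$. Hence the whole statement follows once I prove the single equivalence: $P$ is $\mathfrak{S}(\Omega)$-ergodic if and only if $\pi_\chi(\mathfrak{A}(\mathbb{G}))''$ is a factor.

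Second, I would make the GNS representation explicit. By Remark \ref{R6.1} the elements $S_tS_u^*$, for pairs of finite paths $t,u$ from $*$ to a common vertex $\pi\in\widehat{G}_N$, form a system of matrix units exhausting $C^*(G_N)$ as $N$ grows, and they exhibit $\mathfrak{A}(\mathbb{G})$ as the path (tail-groupoid) algebra of the weighted branching graph. Using Lemma \ref{Lemma:decom}, Proposition \ref{P6.2} and the diagonalization $F_\pi=\sum_{s\in\Omega(*,\pi)}w(s)S_sS_s^*$ of Equation \eqref{Eq:R6.1}, a direct computation gives $\chi(S_tS_u^*)=\delta_{t,u}\,P(X_N=\pi)\,w(t)/\dim_q(\pi)$, so that $\chi$ is diagonal on the matrix units with weights governed by $w$ and $P$. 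I would then identify the GNS Hilbert space of $\chi$ with an $L^2$-space of the tail equivalence relation $R$ on $(\Omega,P)$, the relation identifying two infinite paths that eventually agree and pass through a common vertex at every sufficiently late level, equipped with the Radon--Nikodym cocycle dictated by $w$; the nontriviality of this cocycle is precisely the reflection of the KMS (non-tracial) nature of $\chi$. The key compatibility is that the transformation group $\mathfrak{S}(\Omega)$ generates exactly this relation $R$, so that $\mathfrak{S}(\Omega)$-invariant measurable sets coincide with $R$-invariant ones.

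Third, I would carry out the center computation. The identification above presents $\mathcal{M}:=\pi_\chi(\mathfrak{A}(\mathbb{G}))''$ as the (possibly type III) von Neumann algebra of the measured equivalence relation $R$, inside which the diagonal $L^\infty(\Omega,P)$, generated by the spectral projections of the $\mathfrak{S}_N(\Omega)$-orbits, sits as a Cartan subalgebra. The center of $\mathcal{M}$ is then the algebra of $R$-invariant, i.e.\ $\mathfrak{S}(\Omega)$-invariant, functions in $L^\infty(\Omega,P)$. Consequently $\mathcal{M}$ is a factor if and only if the $\mathfrak{S}(\Omega)$-invariant $\sigma$-algebra is $P$-trivial, that is, if and only if $P$ is $\mathfrak{S}(\Omega)$-ergodic. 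This yields the required equivalence, and combined with the reductions of the first step it gives both assertions of the theorem, including the final ``therefore'' clause.

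The hardest part, I expect, will be the rigorous identification of $\mathcal{M}$ with the measured-equivalence-relation von Neumann algebra together with the verification that the diagonal $L^\infty(\Omega,P)$ is a Cartan subalgebra whose relative commutant is itself, so that the center is \emph{exactly} the invariant functions. This is delicate in the present non-tracial setting, since one must track the modular cocycle of $\widehat{\tau}^{\mathbb{G}}$ (equivalently the weight $w$) through the construction and ensure that the KMS condition is matched precisely by the quasi-invariance of $P$ under $R$; the classical tracial Vershik--Kerov argument must thus be upgraded to a Krieger-type argument that accommodates genuine type III behavior.
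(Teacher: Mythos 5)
Your proposal is correct and follows essentially the same route as the paper: realize the GNS representation of $\chi$ via a Krieger-type construction on the tail equivalence relation generated by $\mathfrak{S}(\Omega)$ on $(\Omega,P)$, verify that the matrix units $S_tS_u^*$ are sent to the characteristic functions of $(C_t\times C_u)\cap\mathcal{R}_N$ with $\chi(S_tS_u^*)=\delta_{t,u}w(t)P(X_N=\pi)/w\text{-}\dim(\pi)$, show that the image of $\mathfrak{A}(\mathbb{G})$ generates the full equivalence-relation von Neumann algebra $W^*(\mathcal{R};P)$, and then combine factoriality-iff-ergodicity (Feldman--Moore) with extremality-iff-factoriality for KMS states and the affine homeomorphisms of Propositions \ref{Prop:IntegralRepresentation} and \ref{P6.2}. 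The step you flag as hardest --- the identification $\pi_\chi(\mathfrak{A}(\mathbb{G}))''=W^*(\mathcal{R};P)$ --- is exactly the content of the paper's Theorem \ref{T6.3}, proved there by approximating the generators $\tilde{f}F_\phi$ of $K(\mathcal{R})$ in the strong operator topology, while the paper simply cites Feldman--Moore for the ergodicity criterion rather than redoing the Cartan-subalgebra center computation you sketch.
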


This equivalence has been known when the branching graph comes from an ordinary group. Indeed, $w$-central probability measures coincide with $\mathfrak{S}(\Omega)$-invariant probability measures in this case, and the equivalence follows from Choquet's theory (see \cite{Phelps:ChoquetTheorem}). A representation-theoretic interpretation of this fact is as follows. Characters of an inductive limit group (or tracial states on the associated Stratila--Voiculescu AF-algebra) correspond to central probability measures on the paths on the branching graph, and extremality for characters (or tracial states) (or the ergodicity for probability measures) corresponds to the factoriality for the corresponding representations of the inductive limit group. Here we prove Theorem \ref{Theorem:Ergodic_ExtremalKMS} by identifying the GNS representation of $\mathfrak{A}(\mathbb{G})$ associated with a quantized character $\chi$ with a kind of crossed product algebra associated with a certain dynamical system on $\Omega$ equipped with the $w$-central probability measure $P$ corresponding to $\chi$. This is completely analogous to the study of AF (or LS)-algebras due to Vershik and Kerov, see \cite{VershikKerov:AF},\cite{Kerov:book}.

\medskip
For the given $w$-central probability measure $P$, we will construct the von Neumann algebra of the dynamical system $(\Omega,\mathcal{F},P,\mathfrak{S}(\Omega))$. The equivalence relation $\mathcal{R}$ is defined by \[\mathcal{R}:=\{(\omega,\omega')\in\Omega\times\Omega:\exists\gamma\in\mathfrak{S}(\Omega), \omega'=\gamma(\omega)\}\] and it is called the \emph{tail equivalence relation}. We denote its equivalent classes by $[\cdot ]$. The projection $\mathrm{pr}:\mathcal{R}\to\Omega$ is defined by $\mathrm{pr}(\omega,\omega'):=\omega$ for any $(\omega,\omega')\in\mathcal{R}$. It is known that the set function $P_l$ on the $\sigma$-algebra $\mathcal{B}:=(\mathcal{F}\times\mathcal{F})\cap\mathcal{R}$ defined by \[P_l(A):=\int_\Omega|A\cap\mathrm{pr}^{-1}(\omega)|\,dP(\omega),\quad A\in\mathcal{B}\] becomes a measure on $(\mathcal{R},\mathcal{F})$, see \cite[Theorem 2]{FeldmanMoore1}. The measure $P_l$ is called the \emph{left measure} of $P$ on $\mathcal{R}$. The following space of bounded ``small support'' functions \[K(\mathcal{R}):=\{f\in L^\infty(\mathcal{R},P_l):  \mathrm{ess.sup}|\{\omega':f(\omega,\omega')\neq0\}|<\infty,\ \mathrm{ess.sup}|\{\omega:f(\omega,\omega')\neq0\}|<\infty\}\] becomes a $*$-algebra, whose multiplication and $*$-operation are defined by \[fg(\omega,\omega'):=\sum_{\omega''\in[\omega]}f(\omega,\omega'')g(\omega'',\omega'),\]\[f^*(\omega,\omega'):=\overline{f(\omega',\omega)}\] for any $f,g\in K(\mathcal{R})$ and $(\omega,\omega')\in\mathcal{R}$. The $*$-representation $\varpi_l$ of $K(\mathcal{R})$ on the Hilbert space $L^2(\mathcal{R},\mathcal{B},P_l)$ is defined by \[[\varpi_l(f)\xi](\omega,\omega'):=\sum_{\omega''\in[\omega]}f(\omega,\omega'')\xi(\omega'',\omega')\] for any $f\in K(\mathcal{R})$ and $\xi\in L^2(\mathcal{R},\mathcal{B},P_l)$. The von Neumann algebra $W^*(\mathcal{R};P)$ is defined to be the double commutant $\varpi_l(K(\mathcal{R}))''$, that is, the closure with respect to the strong operator topology (see e.g.\ \cite[Theorem I.7.1]{Davidson}). This construction is called the \emph{Krieger construction}. It is well known that the von Neumann algebra $W^*(\mathcal{R};P)$ is a factor if and only if $P$ is $\mathfrak{S}(\Omega)$-ergodic, see \cite[Proposition 2.9(2)]{FeldmanMoore2}.

\medskip
Next, we construct another von Neumann algebra which also acts on $L^2(\mathcal{R},\mathcal{B},P_l)$. The equivalence relation $\mathcal{R}_N$ is defined by \[\mathcal{R}_N:=\{(\omega,\omega')\in\Omega\times\Omega:\exists\gamma\in\mathfrak{S}_N(\Omega),\omega'=\gamma(\omega)\}\] and its equivalence classes are denoted by $[\,\cdot\,]_N$. For any $\rho\in\widehat{G}_N$ and $t,u\in\Omega(*,\rho)$ we define $f_{t,u}$ to be the characteristic function of $(C_t\times C_u)\cap\mathcal{R}_N$. Remark that $f_{t,u}\in K(\mathcal{R})$ by the definition. Every pair $(\omega,\omega')\in(C_t\times C_u)\cap\mathcal{R}_N$ can be written as $(\omega,\omega')=((t,\omega_{N+1},\omega_{N+2},\dots),(u,\omega_{N+1},\omega_{N+2},\dots))$. Using this, for any $t,u\in\Omega(*,\rho),t',u'\in\Omega(*,\rho')$ and any $(\omega,\omega')\in\mathcal{R}$ we have \begin{align*}f_{t,u}f_{t',u'}(\omega,\omega')&=\sum_{\omega''\in[\omega]}f_{t,u}(\omega,\omega'')f_{t',u'}(\omega'',\omega')\\&=\delta_{\rho,\rho'}\delta_{u,t'}1_{\mathcal{R}_N}(\omega,\omega')1_{C_t}(\omega)1_{C_{u'}}(\omega')=\delta_{\rho,\rho'}\delta_{u,t'}f_{t,u'}(\omega,\omega')\end{align*} and \[f_{t,u}^*(\omega,\omega')=f_{t,u}(\omega',\omega)=f_{u,t}(\omega,\omega').\] Thus the functions $f_{t,u},\ t,u\in\Omega(*,\rho),\rho\in\widehat{G}_N$ form a matrix unit system and we obtain the $*$-homomorphism $\varrho_N:W^*(G_N)\to W^*(\mathcal{R};P)$ defined by $\varrho_N(S_tS_u^*):=\varpi_l(f_{t,u})\in\varpi_l(K(\mathcal{R}))$, where $S_tS_u^*$ is a matrix unit of $B(\ell^2(\Omega(*,\rho)))$, see Remark \ref{R6.1}. 

\begin{lemma}
We have $\varrho_{N+1}\circ\Theta_N=\varrho_N$ on $W^*(G_N)$ for all $N=1,2,\dots$.
\end{lemma}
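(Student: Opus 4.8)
The plan is to verify the identity on the weak-$*$-dense set of matrix units $S_tS_u^*$ with $t,u\in\Omega(*,\rho)$, $\rho\in\widehat{G}_N$, and then invoke normality of the two $*$-homomorphisms $\varrho_{N+1}\circ\Theta_N$ and $\varrho_N$ to extend the equality to all of $W^*(G_N)$. First I would compute $\Theta_N(S_tS_u^*)$ explicitly. By the embedding formula \eqref{eq:explicit_mapping}, writing $S_e:=S_{\rho,l,\pi}$ for the edge $e=[\rho,l,\pi]$ so that $\Omega(\rho,\pi)=\{[\rho,l,\pi]:1\le l\le m_\pi(\rho)\}$, the $\pi$-component of $\Theta_N(x)$ equals $\sum_{\rho\prec\pi}\sum_{e\in\Omega(\rho,\pi)}S_e x_\rho S_e^*$. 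Since $S_tS_u^*$ is supported in the single block $B(\mathcal{H}_\rho)$, and since $S_{(t,e)}=S_eS_t$ for the path $(t,e)\in\Omega(*,\pi)$ obtained by appending $e$ to $t$ (see Remark \ref{R6.1}), this yields
\begin{equation*}
\Theta_N(S_tS_u^*)=\Big(\sum_{e\in\Omega(\rho,\pi)}S_{(t,e)}S_{(u,e)}^*\Big)_{\pi\in\widehat{G}_{N+1}},
\end{equation*}
where the $\pi$-component vanishes unless $\rho\prec\pi$; this is a (generally infinitely supported) family of finite sums of level-$(N+1)$ matrix units indexed by the one-edge extensions of $t$ and $u$ by a common edge $e$.

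Next I would apply $\varrho_{N+1}$ blockwise, using normality to interchange it with the sum over $\pi$, so that
\begin{equation*}
\varrho_{N+1}(\Theta_N(S_tS_u^*))=\sum_{\pi;\,\rho\prec\pi}\sum_{e\in\Omega(\rho,\pi)}\varpi_l(f_{(t,e),(u,e)}).
\end{equation*}
Because $\varpi_l$ is a $*$-representation of $K(\mathcal{R})$, the whole problem reduces to the single function-level identity $\sum_{\pi;\,\rho\prec\pi}\sum_{e\in\Omega(\rho,\pi)}f_{(t,e),(u,e)}=f_{t,u}$ in $K(\mathcal{R})$. The hard part --- really the only substantive point --- is to verify the underlying disjoint decomposition of subsets of $\mathcal{R}$,
\begin{equation*}
(C_t\times C_u)\cap\mathcal{R}_N=\bigsqcup_{\pi;\,\rho\prec\pi}\ \bigsqcup_{e\in\Omega(\rho,\pi)}(C_{(t,e)}\times C_{(u,e)})\cap\mathcal{R}_{N+1}.
\end{equation*}
This I would check by unwinding the definitions: a pair $(\omega,\omega')\in C_t\times C_u$ lies in $\mathcal{R}_N$ exactly when $r(\omega_N)=r(\omega'_N)$ (automatic here, both being $\rho$) and $\omega_n=\omega'_n$ for all $n>N$; in particular the common $(N+1)$-st edge $e:=\omega_{N+1}=\omega'_{N+1}$ is forced to be an edge out of $\rho$, landing at some $\pi$ with $\rho\prec\pi$. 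Conversely, on $C_{(t,e)}\times C_{(u,e)}$ one has $\omega_{N+1}=\omega'_{N+1}=e$ and $r(\omega_{N+1})=\pi=r(\omega'_{N+1})$ automatically, so membership in $\mathcal{R}_{N+1}$ reduces to $\omega_n=\omega'_n$ for $n>N+1$, which together with $\omega_{N+1}=\omega'_{N+1}$ is precisely the $\mathcal{R}_N$-condition. The pieces are disjoint since they are distinguished by the value of $\omega_{N+1}$, and their union exhausts $(C_t\times C_u)\cap\mathcal{R}_N$; passing to characteristic functions gives the required identity.

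Finally I would record that the right-hand side of this identity is $f_{t,u}\in K(\mathcal{R})$, so that $\varrho_{N+1}(\Theta_N(S_tS_u^*))=\varpi_l(f_{t,u})=\varrho_N(S_tS_u^*)$, the last equality being the definition of $\varrho_N$. As the elements $S_tS_u^*$ span a weak-$*$-dense $*$-subalgebra of $W^*(G_N)$ (namely $\mathbb{C}[G_N]=\bigodot_{\rho\in\widehat{G}_N}B(\mathcal{H}_\rho)$), and both maps in question are normal $*$-homomorphisms, the equality on matrix units extends to all of $W^*(G_N)$, completing the argument. The only point demanding care beyond the set identity is the normality justification for interchanging $\varrho_{N+1}$ with the infinite sum over $\pi$ produced by $\Theta_N$ --- recall from the remark after Proposition \ref{P4.1} that $\Theta_N(\mathbb{C}[G_N])$ need not lie in $C^*(G_{N+1})$ --- but this is handled once the two maps are known to be normal.
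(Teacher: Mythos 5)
Your proposal is correct and follows essentially the same route as the paper's proof: reduce to the matrix units $S_tS_u^*$, push them through the embedding formula \eqref{eq:explicit_mapping} to get $\sum_{\pi;\,\rho\prec\pi}\sum_{e\in\Omega(\rho,\pi)}S_{(t,e)}S_{(u,e)}^*$, and identify $\sum\varpi_l(f_{(t,e),(u,e)})$ with $\varpi_l(f_{t,u})$. You merely make explicit two points the paper leaves implicit --- the disjoint decomposition of $(C_t\times C_u)\cap\mathcal{R}_N$ underlying the function-level identity, and the normality needed to handle the infinite sum over $\pi$ --- both of which you verify correctly.
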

\begin{proof}
It suffices to show that $\varrho_{N+1}(\Theta_N(S_tS_u^*))=\varrho_N(S_tS_u^*)=\varpi_l(f_{t,u})$ for any $t,u\in\Omega(*,\rho)$ and any $\rho\in\widehat{G}_N$, where $S_tS_u^*$ is a matrix unit, see Remark \ref{R6.1}. By Formula \eqref{Eq:sec4}, we have \begin{align*}\varrho_{N+1}(\Theta_N(S_tS_u^*))&=\sum_{\pi\in\widehat{G}_{N+1};\rho\prec\pi}\sum_{e\in\Omega(\rho,\pi)}\rho_{N+1}(S_eS_tS_u^*S_e^*)\\&=\sum_{\pi\in\widehat{G}_N;\rho\prec}\sum_{e\in\Omega(\rho,\pi)}\varpi_l(f_{(t,e),(u,e)})\\&=\varpi_l(f_{t,u})=\varrho_N(S_tS_u^*).\end{align*} Hence we are done.
\end{proof}

By the lemma, we obtain the representation $\varrho$ of $\mathfrak{A}(\mathbb{G})$ on $L^2(\mathcal{R},\mathcal{B},P_l)$ defined by $\varrho|_{C^*(G_N)}=\varrho_N$ for any $N=1,2,\dots$. By the construction, the double commutant $\varrho(\mathfrak{A}(\mathbb{G}))''$ is a von Neumann subalgebra of $W^*(\mathcal{R};P)$. Furthermore, we have the following:

\begin{theorem}\label{T6.3}
The above two von Neumann algebras coincide, that is, $\varrho(\mathfrak{A}(\mathbb{G}))''=W^*(\mathcal{R};P)$. 
\end{theorem}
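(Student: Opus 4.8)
One inclusion is immediate: each generator $\varrho_N(S_tS_u^*)=\varpi_l(f_{t,u})$ already lies in $\varpi_l(K(\mathcal{R}))$, so $\varrho(\mathfrak{A}(\mathbb{G}))\subset W^*(\mathcal{R};P)$ and hence $\varrho(\mathfrak{A}(\mathbb{G}))''\subset W^*(\mathcal{R};P)$. The content is therefore the reverse inclusion, and since $\varpi_l(K(\mathcal{R}))$ is $\sigma$-strongly dense in $W^*(\mathcal{R};P)=\varpi_l(K(\mathcal{R}))''$, it suffices to show $\varpi_l(K(\mathcal{R}))\subset\varrho(\mathfrak{A}(\mathbb{G}))''$.

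My plan is to first produce, inside $\varrho(\mathfrak{A}(\mathbb{G}))''$, the two ingredients that together generate the Krieger algebra of each finite subrelation $\mathcal{R}_N$: the diagonal (Cartan) subalgebra $L^\infty(\Omega,P)$ and the partial isometries implementing $\mathcal{R}_N$. For the Cartan part, a direct computation with the defining formula for $\varpi_l$ shows that the diagonal matrix units act as multiplication operators; concretely $\varpi_l(f_{t,t})$ is multiplication by the cylinder indicator $1_{C_t}$ in the first variable. Since the $1_{C_t}$ over all finite paths $t$ generate $L^\infty(\Omega,P)$ as a von Neumann algebra, this gives $L^\infty(\Omega,P)\subset\varrho(\mathfrak{A}(\mathbb{G}))''$. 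The partial isometries $\varpi_l(f_{t,u})$ are generators of $\varrho(\mathfrak{A}(\mathbb{G}))$ by definition, and a second short computation yields $M_g\,\varpi_l(f_{t,u})=\varpi_l(g\cdot f_{t,u})$ for $g\in L^\infty(\Omega,P)$, where $M_g$ is multiplication by $g(\omega)$ in the first variable and $(g\cdot f_{t,u})(\omega,\omega')=g(\omega)f_{t,u}(\omega,\omega')$. Decomposing an arbitrary element of $K(\mathcal{R}_N)$ along the fibre structure of $\mathcal{R}_N$ (every pair is of the form $((t,\eta),(u,\eta))$ with common tail $\eta$, and any tail-dependence of the coefficient is a function of $\omega$) then exhibits it as such a combination, so $\varpi_l(K(\mathcal{R}_N))\subset\varrho(\mathfrak{A}(\mathbb{G}))''$ for every $N$.

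Finally I would pass from the finite levels to the whole relation using $\mathcal{R}=\bigcup_N\mathcal{R}_N$, which holds since $\mathfrak{S}(\Omega)=\bigcup_N\mathfrak{S}_N(\Omega)$ and makes $\mathcal{R}$ hyperfinite. Given $f\in K(\mathcal{R})$, the truncations $f_N:=f\cdot 1_{\mathcal{R}_N}$ lie in $K(\mathcal{R}_N)$, satisfy $|f_N|\le|f|$, and converge to $f$ pointwise $P_l$-almost everywhere. The ``small support'' condition provides a Schur-type bound making $\sup_N\Vert\varpi_l(f_N)\Vert<\infty$; combined with the fact that, for each $\xi$ in the dense subspace $K(\mathcal{R})\subset L^2(\mathcal{R},\mathcal{B},P_l)$, the sums defining $\varpi_l(f)\xi$ are finite almost everywhere, dominated convergence gives $\varpi_l(f_N)\xi\to\varpi_l(f)\xi$. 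Uniform boundedness then upgrades this to $\varpi_l(f_N)\to\varpi_l(f)$ strongly, so $\varpi_l(f)\in\varrho(\mathfrak{A}(\mathbb{G}))''$, which closes the reverse inclusion.

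I expect this last approximation step to be the main obstacle. One must verify that restriction of a function to $\mathcal{R}_N$ is compatible with the convolution product and involution on $K(\mathcal{R})$, confirm the Schur-type uniform norm bound for the truncations, and carefully justify the strong limit (uniform boundedness plus convergence on the dense domain $K(\mathcal{R})$). By contrast, the algebraic steps of the second paragraph—the two explicit $\varpi_l$-computations and the fibrewise decomposition of $K(\mathcal{R}_N)$—are routine once the explicit action of $\varpi_l$ is used.
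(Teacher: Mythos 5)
Your proposal is correct, but it takes a genuinely different route from the paper. The paper does not use the exhaustion $\mathcal{R}=\bigcup_N\mathcal{R}_N$ at all; instead it invokes the Feldman--Moore structure theory: by \cite[Proposition 2.4]{FeldmanMoore2} every element of $K(\mathcal{R})$ is a finite linear combination of products $\tilde{f}F_\phi$ with $f\in L^\infty(\Omega,\mathcal{F},P)$ and $\phi$ a measurable bijection whose graph lies in $\mathcal{R}$. The diagonal part $\varpi_l(\tilde f)$ is handled exactly as in your first step (cylinder indicators, then the $\pi$-$\lambda$ theorem and simple-function approximation with dominated convergence), but the off-diagonal part is handled by partitioning the graph of $\phi$ into countably many pieces $D_n$ on which $\phi$ agrees with some $g_n\in\mathfrak{S}(\Omega)$ (the remark after \cite[Proposition 3.3]{FeldmanMoore1}), writing $F_\phi=\sum_n\widetilde{1}_{g_n(D_n)}F_{g_n}$ and taking a monotone strong limit. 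Your argument replaces this with the hyperfinite exhaustion: fibrewise decomposition of $K(\mathcal{R}_N)$ into $L^\infty$-combinations of the matrix units $f_{t,u}$, followed by truncation $f_N=f\cdot 1_{\mathcal{R}_N}$ and a strong limit controlled by a Schur bound. This is essentially a self-contained reproof of the relevant special case of Feldman--Moore, exploiting that $\mathcal{R}$ is by construction an increasing union of finite subrelations; the paper's route is shorter on the page because the combinatorial work is delegated to the cited results, and it would apply verbatim to any countable measured equivalence relation. Two small points you should nail down in a write-up: the fibrewise decomposition of an element of $K(\mathcal{R}_N)$ is in general a countably infinite orthogonal block sum over $\rho\in\widehat{G}_N$, so you need the uniform norm bound on the blocks (which the small-support condition supplies) to conclude strong convergence into $\varrho(\mathfrak{A}(\mathbb{G}))''$; and you should record explicitly that truncation to $\mathcal{R}_N$ does not increase the two essential support constants, so the Schur estimate $\Vert\varpi_l(f_N)\Vert\leq\sqrt{k_1k_2}\,\Vert f\Vert_\infty$ is indeed uniform in $N$.
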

\begin{proof}
It suffices to show that $\varpi_l(K(\mathcal{R}))\subset\varrho(\mathfrak{A}(\mathbb{G}))''$. For any function $f\in L^\infty(\Omega,\mathcal{F},P)$ the function $\tilde{f}$ on $\mathcal{R}$ is defined to be $\tilde{f}(\omega,\omega'):=\delta_{\omega,\omega'}f(\omega)$. For any measurable bijection $\phi\colon(\Omega,\mathcal{F})\to(\Omega,\mathcal{F})$ such that $\{(\omega,\phi(\omega)):\omega\in\Omega\}\subset\mathcal{R}$, the function $F_\phi$ on $\mathcal{R}$ is defined to be \[F_\phi(\omega,\omega'):=\begin{cases}1&(\omega=\phi(\omega'))\\0&\text{(otherwise).}\end{cases}\] By \cite[Proposition 2.4]{FeldmanMoore2}, every function in $K(\mathcal{R})$ is a finite linear combination of functions of the form $\tilde{f}F_\phi$ with a function $f$ and a measurable bijection $\phi$ as above. Thus, our goal is to prove that $\varpi_l(f),\varpi(F_\phi)\in\varrho(\mathfrak{A}(\mathbb{G}))''$ for any $f\in L^\infty(\Omega,\mathcal{F},P)$ and any measurable bijection $\phi\colon(\Omega,\mathcal{F})\to(\Omega,\mathcal{F})$ such that $\{(\omega,\phi(\omega)):\omega\in\Omega\}\subset\mathcal{R}$.

For any finite path $t$ we have $\varpi_l(1_{C_t})=\varpi_l(f_{t,t})=\varrho(S_tS_t^*)\in\varrho(\mathfrak{A}(\mathbb{G}))\subset\varrho(\mathfrak{A}(\mathbb{G}))''$. Note that the collection of all cylinder sets and the empty set is a $\pi$-system and the collection of the set $A$ such that $1_A\in\varrho(\mathfrak{A}(\mathbb{G}))''$ is a $\lambda$-system. Thus, by the $\pi$-$\lambda$ theorem, we have $\varpi_l(1_A)\in\varrho(\mathfrak{A}(\mathbb{G}))''$ for any $A\in\mathcal{F}$. For any $f\in L^\infty(\Omega,\mathcal{F},P)$, there exists a sequence $(\psi_n)_{n\geq1}$ of simple functions such that $\psi_n\to f$ as $n\to\infty$ $P$-a.s. with $0\leq|\psi_1|\leq|\psi_2|\leq\cdots\leq |f|$. Then $\varpi_l(\widetilde{\psi}_n)\in\varrho(\mathfrak{A}(\mathbb{G}))$ and $\|\varpi_l(\widetilde{\psi}_n)\|\leq\|\varpi_l(\tilde{f})\|$ for any $n\geq1$. Moreover, for any $\eta\in L^2(\mathcal{R},\mathcal{B},P_l)$, by the dominated convergence theorem, \[\|\varpi_l(\tilde{f}-\widetilde{\psi}_n)\eta\|^2=\int_\mathcal{R}|(f(\omega)-\psi_n(\omega))\eta(\omega,\omega')|^2\,dP_l(\omega,\omega')\to0\quad\text{as }n\to\infty,\] that is, $\varpi_l(\widetilde{\psi}_n)$ converges to $\varpi_l(\tilde{f})$ in the strong operator topology. Thus, $\varpi_l(\tilde{f})\in\varrho(\mathfrak{A}(\mathbb{G}))''$. 

For any measurable bijection $\phi\colon(\Omega,\mathcal{F})\to(\Omega,\mathcal{F})$ such that $\{(\omega,\phi(\omega)):\omega\in\Omega\}\subset\mathcal{R}$, by \cite[Remark of Proposition 3.3]{FeldmanMoore1}, there exists $g_n\in\mathfrak{S}(\Omega)$ and $ D_n\in\mathcal{B}$ with $n=1,2,\dots$ such that the collection $\{D_n\}_{n\geq1}$ is a partition of $\mathcal{R}$ and $D_n\subseteq\{(\omega,\phi(\omega)):\phi(\omega)=g_n(\omega)\}$. Then we have $F_\phi=\sum_{n\geq1}\widetilde{1}_{g_n(D_n)}F_{g_n}$. Remark that $\varpi_l(F_g)\in\varrho(\mathfrak{A}(\mathbb{G}))\subset\varrho(\mathfrak{A}(\mathbb{G}))''$ for any $g\in\mathfrak{S}(\Omega)$. Thus, if the partition $\{D_n\}_n$ is finite, then the operator $\varpi_l(F_\phi)$ belongs to $\varrho(\mathfrak{A}(\mathbb{G}))''$. We suppose that the partition $\{D_n\}_n$ is infinite. Since $0\leq\sum_{n=1}^N\tilde{1}_{g_n(D_n)}F_{g_n}\nearrow F_\phi$ as $N\to\infty$ $P_l$-a.s., we obtain \[\left\|\varpi_l(F_\phi-\sum_{n=1}^N\tilde{1}_{g_n(D_n)}F_{g_n})\eta\right\|\to0\quad\text{as }N\to\infty\] for any $\eta\in L^2(\mathcal{R},\mathcal{B},P_l)$ by the dominated convergence theorem. Therefore, $\varpi_l(\sum_{n=1}^N\tilde{1}_{g_n(D_n)}F_{g_n})$ converges to $\varpi_l(F_\phi)$ in the strong operator topology, that is, $\varpi_l(F_\phi)$ also belongs to $\varrho(\mathfrak{A}(\mathbb{G}))''$. Hence we are done. 
\end{proof}

\medskip
Recall that for any $\rho\in\widehat{G}_N$ and any $t,u\in\Omega(*,\rho)$ every pair $(\omega,\omega')\in(C_t\times C_u)\cap\mathcal{R}_N$ can be written as $(\omega,\omega')=((t,\omega_{N+1},\omega_{N+2},\dots),(u,\omega_{N+1},\omega_{N+2},\dots))$. Using this and Formula \eqref{Eq:R6.1}, we have \begin{align*}\langle\varrho_N(S_tS_u^*)\eta_D,\eta_D\rangle&=\int_\Omega\sum_{\omega'\in[\omega]_N}[\varrho_N(S_tS_u^*)\eta_D](\omega,\omega')\overline{\eta_D(\omega,\omega')}\,dP(\omega)\\&=\int_\Omega f_{t,u}(\omega,\omega)\,dP(\omega)\\&=\delta_{t,u}P(C_t)\\&=\delta_{t,u}\frac{w(t)P(X_N=\rho)}{w\mathchar`-\dim(\rho)}=\chi(S_tS_u^*).\end{align*} Therefore, we obtain that $(L^2(\mathcal{R},\mathcal{B},P),\varrho,\eta_D)$ is the GNS-triple of $\mathfrak{A}(\mathbb{G})$ associated with $\chi$. Then we can prove Theorem \ref{Theorem:Ergodic_ExtremalKMS}.

\begin{proof}(Theorem \ref{Theorem:Ergodic_ExtremalKMS})
 By \cite[Theorem 5.3.30(3)]{BratteliRobinson2}, the von Neumann algebra $\varrho(\mathfrak{A}(\mathbb{G}))''=W^*(\mathcal{R};P)$ is factor if and only if the KMS state $\chi$ is extremal in the simplex $\mathrm{KMS}(\mathfrak{A}(\mathbb{G}),\widehat{\tau}^\mathbb{G})$. Therefore, by Proposition \ref{Prop:IntegralRepresentation}, the von Neumann algebra $W^*(\mathcal{R};P)$ is a factor if and only if the corresponding character $\chi$ is extremal in $\mathrm{Ch}(\mathbb{G})$. Theorem \ref{Theorem:Ergodic_ExtremalKMS} follows from this and the property of the Krieger construction mentioned in the first half of this subsection, that is, a von Neumann algebra constructed by the Krieger construction is a factor if and only if the corresponding probability measure is $\mathfrak{S}(\Omega)$-ergodic. 
\end{proof}

\subsubsection{The Ergodic method}
Theorem \ref{theorem:ergodicmethod} is a type of claim called the \emph{ergodic method}. Let $\mathcal{G}=(V,E,s,r)$ be a branching graph with a weight function $w\colon E\to(0,\infty)$. We define $X_N\colon\Omega\to V$ by $X_N(\omega)=r(\omega_N)$ for any $\omega=(\omega_N)_N\in\Omega$.

\begin{theorem}[the ergodic method]\label{theorem:ergodicmethod}
If $P$ is $\mathfrak{S}(\Omega)$-ergodic and $w$-central then there exists a path $\omega\in\Omega$ such that \[\frac{P(X_K=v)}{w\mathchar`-\dim(v)}=\lim_{N\to\infty,K\leq N}\frac{w\mathchar`-\dim(v,X_N(\omega))}{w\mathchar`-\dim(X_N(\omega))}\] for any $v\in V_K$ and $k\geq1$. In particular, if a weighted branching graph $\mathcal{G}$ associated with an inductive system of CQGs, then this holds true for any extremal $w$-central probability measures.
\end{theorem}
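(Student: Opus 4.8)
The plan is to run Vershik and Kerov's ergodic method through a backward (reverse) martingale argument. On $(\Omega,\mathcal{F})$ introduce the \emph{decreasing} family of tail $\sigma$-algebras $\mathcal{A}_N := \sigma(\omega_{N+1},\omega_{N+2},\dots)$, $N\geq 0$, and note that $X_N = s(\omega_{N+1})$ is $\mathcal{A}_N$-measurable while $\mathcal{A}_{N+1}\subset\mathcal{A}_N$. The first point I would record is that, for a $w$-central measure $P$, the ratio in the statement is genuinely a conditional probability. A direct computation from the definition of $w$-centrality gives, for $v\in V_K$ and $v'\in V_N$ with $K\le N$,
\[
P(X_K=v\mid X_N=v')=\frac{w\text{-}\dim(v)\,w\text{-}\dim(v,v')}{w\text{-}\dim(v')},
\]
obtained by summing $P(C_t)=w(t)\,P(X_N=v')/w\text{-}\dim(v')$ over the paths $t$ from $*$ to $v'$ that pass through $v$ at level $K$ (these factor as a path $*\to v$ followed by a path $v\to v'$, so their weights sum to $w\text{-}\dim(v)\,w\text{-}\dim(v,v')$).

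Next I would establish the Markov property of $w$-central measures: conditionally on $\{X_N=v'\}$, the initial segment $(\omega_1,\dots,\omega_N)$ and the tail $(\omega_{N+1},\omega_{N+2},\dots)$ are independent. This again follows from $w$-centrality, since for any two paths $t_1,t_2$ from $*$ to $v'$ and any common continuation one has $P(C_{t_1}\cap\,\cdot\,)/P(C_{t_2}\cap\,\cdot\,)=w(t_1)/w(t_2)$, independent of the tail datum. Consequently $\mathbb{E}[\mathbf{1}_{\{X_K=v\}}\mid\mathcal{A}_N]=P(X_K=v\mid X_N)$, and combining with the previous display,
\[
\frac{w\text{-}\dim(v,X_N)}{w\text{-}\dim(X_N)}=\frac{1}{w\text{-}\dim(v)}\,\mathbb{E}[\mathbf{1}_{\{X_K=v\}}\mid\mathcal{A}_N]\qquad P\text{-a.s.}
\]

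It remains to pass to the limit using ergodicity. The key structural fact is that the tail $\mathcal{A}_\infty:=\bigcap_N\mathcal{A}_N$ is $P$-trivial: any $A\in\mathcal{A}_\infty$ is genuinely fixed by every $\gamma\in\mathfrak{S}_N(\Omega)$, because such $\gamma$ only permutes the first $N$ edges while $A\in\mathcal{A}_N$ depends solely on the edges beyond level $N$; hence every $\mathcal{A}_\infty$-set is $\mathfrak{S}(\Omega)$-invariant, and $\mathfrak{S}(\Omega)$-ergodicity of $P$ forces it to have measure $0$ or $1$. By Doob's backward martingale convergence theorem the right-hand side above then converges $P$-almost surely (and in $L^1$) to $\frac{1}{w\text{-}\dim(v)}\,\mathbb{E}[\mathbf{1}_{\{X_K=v\}}\mid\mathcal{A}_\infty]=\frac{P(X_K=v)}{w\text{-}\dim(v)}$. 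Since $\bigsqcup_{K\ge1} V_K$ is countable, intersecting the countably many conull sets on which this convergence holds yields a single $P$-conull set; as $P$ is a probability measure it is nonempty, and any $\omega$ in it satisfies the asserted identity for all $v\in V_K$ and all $K\ge 1$ at once. The ``in particular'' clause is then immediate from Theorem \ref{Theorem:Ergodic_ExtremalKMS}, which identifies the extremal $w$-central probability measures with the $\mathfrak{S}(\Omega)$-ergodic ones when the weighted branching graph arises from an inductive system of CQGs.

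The step I expect to be the main obstacle is not the limiting argument itself but the two structural inputs feeding it: the Markov property of $w$-central measures and, above all, the careful measure-theoretic identification of the $\mathfrak{S}(\Omega)$-invariant $\sigma$-algebra with the tail $\mathcal{A}_\infty$, so that ergodicity genuinely collapses the conditioning $\sigma$-algebra in the limit. Once these are in place, the convergence is a routine application of the backward martingale theorem and the diagonalization over the countable vertex set is harmless.
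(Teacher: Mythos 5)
Your proof is correct and follows essentially the same route as the paper's: a backward-martingale argument over a decreasing filtration, with every tail set being $\mathfrak{S}(\Omega)$-invariant so that ergodicity collapses the limiting conditional expectation to a constant, followed by a diagonalization over the countable vertex set. The only cosmetic difference is that you condition on the full tail $\sigma(\omega_{N+1},\omega_{N+2},\dots)$ and realize the ratio as $\tfrac{1}{w\text{-}\dim(v)}\mathbb{E}[\mathbf{1}_{\{X_K=v\}}\mid\mathcal{A}_N]$ via the Markov property of $w$-central measures (making the martingale property automatic), whereas the paper works with $\mathcal{E}_N=\sigma(X_N,X_{N+1},\dots)$ and verifies the reverse-martingale identity $\mathbb{E}[Z_N\mathbf{1}_A]=\mathbb{E}[Z_{N+1}\mathbf{1}_A]$ by direct computation on cylinder events.
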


The first part of this theorem is obtained by the standard method using the backwards martingale convergence theorem and the ergodicity of probability measures. By Theorem \ref{Theorem:Ergodic_ExtremalKMS}, $\mathfrak{S}(\Omega)$-ergodic $w$-central probability measures coincide with extremal $w$-central probability measures. Thus, the second part follows.   

\begin{proof}
We fix the vertex $v\in V_K.$ For any $N\geq K+1,$ the $\sigma$-algebra $\mathcal{E}_N$ is generated by $X_N,X_{N+1},\dots$ and the random variables $Z_N$ on the probability space $(\Omega,\mathcal{F},P)$ are defined by \[Z_N(\omega):=\frac{w\mathchar`-\dim(v,X_N(\omega))}{w\mathchar`-\dim(X_N(\omega))}\] for $N=K+1,K+2,\dots$. Note that $\mathcal{E}_{K+1}\supset\mathcal{E}_{K+2}\supset\cdots$ and $|Z_N|\leq1/w\mathchar`-\dim(v)$ for any $N$. We claim that the stochastic process $(Z_N)_{N=K+1}^\infty$ is a $\{\mathcal{E}_N\}_N$-backward martingale. Clearly, $Z_N$ is $\mathcal{E}_N$-adapted and integrable. Thus it suffices to show that $\mathbb{E}[Z_N|\mathcal{E}_{N+1}]=Z_{N+1}$ $P$-almost surely. For any $L\geq N+1$ and any $u_n\in V_n$ with $n=N,\dots,L$ let $A:=\cap_{n=N+1}^L(X_n=u_n)\in\mathcal{E}_{N+1}$ and assume $A\neq\emptyset$. It suffices to show that $\mathbb{E}[Z_N1_A]=\mathbb{E}[Z_{N+1}1_A]$ where $1_A$ denotes the characteristic function of $A$. Indeed, for each path $t$ from $u_{N+1}$ to $u_L$ along $u_{N+2},\dots,u_{L-1}$, we have \[\mathbb{E}[Z_{N+1}1_A]=\frac{w\mathchar`-\dim(v,u_{N+1})w(t)}{w\mathchar`-\dim(u_L)}P(X_L=u_L)=\mathbb{E}[Z_N1_A].\] Therefore, by the backward martingale theorem, see e.g.\ \cite[Section 5.6]{Durrett}, $Z_N$ converges to $\mathbb{E}[Z_{K+1}|\mathcal{E}_\infty]$ $P$-almost surely as well as in $L^1$-norm, where $\mathcal{E}_\infty:=\cap_{N=K+1}^\infty\mathcal{E}_N$. Since the collection of all $\mathfrak{S}(\Omega)$-invariant sets forms a $\sigma$-algebra, every measurable set in $\mathcal{E}_\infty$ is $\mathfrak{S}(\Omega)$-invariant. Since $P$ is $\mathfrak{S}(\Omega)$-ergodic, we have \[\mathbb{E}[Z_{K+1}|\mathcal{E}_\infty]=\mathbb{E}[Z_{K+1}]=\frac{P(X_K=v)}{w\mathchar`-\dim(v)},\quad P\mathchar`-\mathrm{a.s.}\] Therefore, \[\frac{P(X_K=v)}{w\mathchar`-\dim(v)}=\lim_{N\to\infty,K\leq N}\frac{w\mathchar`-\dim(v,X_N(\omega))}{w\mathchar`-\dim(X_N(\omega))}\] for $P$-almost sure $\omega\in\Omega.$ Since the vertex set $V$ is countable, we are done. The final part of the theorem follows from Theorem \ref{Theorem:Ergodic_ExtremalKMS}.
\end{proof}

\medskip
When the weighted branching graph $(\mathcal{G},w)$ is associated with an inductive system $\mathbb{G}$ of CQGs, we have the following corollary.
\begin{corollary}\label{Cor6.2}
For any extremal quantized character $\chi\in\mathrm{ex}(\mathrm{Ch}(\mathbb{G}))$, there exists a sequence $\pi(1)\prec\pi(2)\prec\cdots$ such that \[\chi|_{C^*(G_K)}=\lim_{N\to\infty,K\leq N}\chi_{\pi(N)}\circ\Theta_{N,K}\] on $C^*(G_N)$, where $\Theta_{N,K}:=\Theta_{N-1}\circ\Theta_{N-2}\circ\cdots\circ\Theta_{K}$.
\end{corollary}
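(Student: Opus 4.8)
The plan is to combine the ergodic method (Theorem \ref{theorem:ergodicmethod}) with the measure--character correspondence (Proposition \ref{P6.2}) and the density-matrix expansion of Remark \ref{R6.1}. First I would pass from the extremal quantized character $\chi\in\mathrm{ex}(\mathrm{Ch}(\mathbb{G}))$ to the $w$-central probability measure $P$ corresponding to it under the affine homeomorphism of Proposition \ref{P6.2}. By Theorem \ref{Theorem:Ergodic_ExtremalKMS} the extremality of $\chi$ forces $P$ to be $\mathfrak{S}(\Omega)$-ergodic, so Theorem \ref{theorem:ergodicmethod} supplies a path $\omega\in\Omega$ such that
\[
\frac{P(X_K=v)}{w\mathchar`-\dim(v)}=\lim_{N\to\infty,\,K\leq N}\frac{w\mathchar`-\dim(v,X_N(\omega))}{w\mathchar`-\dim(X_N(\omega))}
\]
for every $v\in V_K$ and every $K\geq1$. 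Setting $\pi(N):=X_N(\omega)=r(\omega_N)$ then produces the desired chain $\pi(1)\prec\pi(2)\prec\cdots$, since $\omega$ is a path.

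The core of the argument is a direct evaluation of $\chi_{\pi(N)}\circ\Theta_{N,K}$ on matrix units. Fix $\rho\in\widehat{G}_K$ and $t,u\in\Omega(*,\rho)$ and consider the matrix unit $S_tS_u^*\in B(\mathcal{H}_\rho)\subset C^*(G_K)$. Iterating the embedding formula \eqref{eq:explicit_mapping} (equivalently \eqref{Eq:sec4}) across the levels from $K$ to $N$ yields
\[
U_{\pi(N)}(\Theta_{N,K}(S_tS_u^*))=\sum_{e\in\Omega(\rho,\pi(N))}S_{(t,e)}S_{(u,e)}^*,
\]
where $(t,e)\in\Omega(*,\pi(N))$ denotes the concatenated path and $S_{(t,e)}=S_eS_t$ in the notation of Remark \ref{R6.1}. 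Substituting the expansion $F_{\pi(N)}=\sum_{s\in\Omega(*,\pi(N))}w(s)S_sS_s^*$ from \eqref{Eq:R6.1}, using that the $S_sS_{s'}^*$ form a matrix unit system (so that $\mathrm{Tr}(S_sS_s^*\,S_{(t,e)}S_{(u,e)}^*)=\delta_{s,(t,e)}\delta_{s,(u,e)}$), together with $w((t,e))=w(t)w(e)$ and $\dim_q(\pi(N))=\mathrm{Tr}(F_{\pi(N)})=w\mathchar`-\dim(\pi(N))$, I obtain the clean formula
\[
\chi_{\pi(N)}(\Theta_{N,K}(S_tS_u^*))=\delta_{t,u}\,w(t)\,\frac{w\mathchar`-\dim(\rho,X_N(\omega))}{w\mathchar`-\dim(X_N(\omega))}.
\]

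Finally I would let $N\to\infty$. The ergodic-method identity above, applied with $v=\rho$, shows that the right-hand side converges to $\delta_{t,u}\,w(t)\,P(X_K=\rho)/w\mathchar`-\dim(\rho)$, which is precisely the value $\chi(S_tS_u^*)$ computed just before the proof of Theorem \ref{Theorem:Ergodic_ExtremalKMS}. Since the matrix units $S_tS_u^*$ span the norm-dense subalgebra $\mathbb{C}[G_K]\subset C^*(G_K)$, and since each $\chi_{\pi(N)}\circ\Theta_{N,K}$ is a state (hence of norm at most $1$), a routine $\varepsilon/3$ estimate upgrades this pointwise convergence on $\mathbb{C}[G_K]$ to weak-$*$ convergence on all of $C^*(G_K)$, giving $\chi|_{C^*(G_K)}=\lim_{N\to\infty}\chi_{\pi(N)}\circ\Theta_{N,K}$. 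The main obstacle is the bookkeeping in the core computation---iterating the embedding \eqref{eq:explicit_mapping} across several levels and tracking the multiplicativity of weights under path concatenation---whereas both the ergodic input and the density/uniform-boundedness step are standard.
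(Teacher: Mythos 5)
Your proposal is correct and follows essentially the same route as the paper: extract the path $\omega$ from the ergodic method via Theorem \ref{Theorem:Ergodic_ExtremalKMS}, set $\pi(N):=X_N(\omega)$, and use the density-matrix expansion \eqref{Eq:R6.1} to compute $\chi_{\pi(N)}\circ\Theta_{N,K}$ explicitly. The only (immaterial) difference is in the final approximation step: you evaluate on matrix units and invoke density of $\mathbb{C}[G_K]$ plus uniform boundedness of the states, whereas the paper works with general $x\in C^*(G_K)$ and justifies the interchange of limit and sum by upgrading weak convergence of the measures $P_K^N\to P_K$ to $\ell^1$-convergence.
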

\begin{proof}
Firstly, we give the following simple observation: Let $P_K,\ P_K^i,\ i=1,2,\dots$ be probability measures on $\widehat{G}_K$. If $P_K^i$ converges to $P_K$ weakly, then we have $\sum_{\pi\in\widehat{G}_K}|P_K(\pi)-P_K^i(\pi)|\to0$ as $i\to\infty$. In order to show this, for any $\epsilon>0$ we take a finite subset $A\subset\widehat{G}_K$ such that $P_K(A)>1-\epsilon/3$. (This can be done since $\widehat{G}_K$ is at most countable.) Since $A$ is a finite set, there exists $i_0$ such that for any $i\geq i_0$ $P_K^{i}(A)>1-\epsilon/3$ and $|P_K(\{\pi\})-P_K^i(\{\pi\})|<\epsilon/3|A|$ for any $\pi\in A$. Then we have $\sum_{\mu\in\mathrm{Sign}_K}|P_K(\{\pi\})-P_K^i(\{\pi\})|<\epsilon$.

We show the corollary. Let $(P_K)_{K=1}^\infty$ be the $w$-coherent system corresponding to the extremal quantized character $\chi$. Remark that the corresponding $w$-central probability measure is $\mathfrak{S}(\Omega)$-ergodic by Theorem \ref{Theorem:Ergodic_ExtremalKMS}. Therefore, by the ergodic method (Theorem \ref{theorem:ergodicmethod}), there exists a path $\omega\in\Omega$ such that \[\frac{P_K(\pi)}{w\mathchar`-\dim(\pi)}=\lim_{N\to\infty,K\leq N}\frac{w\mathchar`-\dim(\pi,X_N(\omega))}{w\mathchar`-\dim(X_N(\omega))}\] for any $\pi\in\widehat{G}_N$. By Proposition \ref{P6.2}, Formula \eqref{Eq:Density} and the observation given in the first paragraph, for any $x=(x_\pi)_{\pi\in\widehat{G}_K}\in C^*(G_K)$ we have \begin{align*}\chi(x)&=\sum_{\pi\in\widehat{G}_K}P_K(\pi)\chi_\pi(x_\pi)\\&=\lim_{N\to\infty,K\leq N}\sum_{\pi\in\widehat{G}_K}w\mathchar`-\dim(\pi)\frac{w\mathchar`-\dim(\pi,X_N(\omega))}{w\mathchar`-\dim(X_N(\omega))}\chi_\pi(x_\pi)\\&=\lim_{N\to\infty,K\leq N}\sum_{\pi\in\widehat{G}_K}\chi_{X_N(\omega)}(\Theta_{N,K}(x)).\end{align*} Hence we are done.
\end{proof}

\section{The case of quantum unitary groups}
\subsection{Quantum unitary groups}
In this section, we review some basic facts of the quantum universal enveloping algebra $\mathcal{U}_q(\mathfrak{gl}(N))$ and the quantum unitary group $U_q(N)$. We suppose that $q$ belongs to the interval $(0,1)$ throughout the rest of the paper. 

\medskip
The quantum universal enveloping algebra $\mathcal{U}_q(\mathfrak{gl}(N))$ is a unital algebra generated by the letters $Q_i$, $Q_i^{-1}$, $E_j$, $ F_j,$ $i=1,\dots,N,j=1,\dots,N-1$ with the following relations:
\[Q_iQ_j=Q_jQ_i,\quad Q_iQ_i^{-1}=Q_i^{-1}Q_i=1,\]\[Q_iE_jQ_i^{-1}=q^{\delta_{i,j}/2-\delta_{i,j+1}/2}E_j,\quad Q_iF_jQ_i^{-1}=q^{-\delta_{i,j}/2+\delta_{i,j+1}/2}F_j,\]\[E_iF_j-F_jE_i=\delta_{i,j}\frac{Q_i^2Q_{i+1}^{-2}-Q_i^{-2}Q_{i+1}^2}{q-q^{-1}},\]\[ E_iE_j=E_jE_i,\quad F_iF_j=F_jF_i,\ |i-j|\geq2,\]\[E_j^2E_{j\pm1}-(q+q^{-1})E_jE_{j\pm1}E_j+E_{j\pm1}E_j^2=0,\]\[F_j^2F_{j\pm1}-(q+q^{-1})F_jF_{j\pm1}F_j+F_{j\pm1}F_j^2=0.\] 
Furthermore, the quantum unitary group $U_q(N)=(A_N,\delta_N)$ is a CQG whose unital $C^*$-algebra $A_N$ is generated by the letters $\mathrm{det}_q^{-1}(N)$ and $u_{ij}(N),$ $i,j=1,\dots,N$ with the following relations:
\begin{align*}\begin{aligned}u_{ij}(N)u_{kj}(N)&=qu_{kj}(N)u_{ij}(N),\quad i<k,\\u_{ij}(N)u_{il}(N)&=qu_{il}(N)u_{ij}(N),\quad j<l,\\u_{ij}(N)u_{kl}(N)&=u_{kl}(N)u_{ij}(N),\quad i<k,\ j>l,\\u_{ij}(N)u_{kl}(N)-qu_{il}(N)u_{kj}(N)&=u_{kl}(N)u_{ij}(N)-q^{-1}u_{kj}(N)u_{il}(N),\quad i<k,\ j<l\\x_{ij}(N)\mathrm{det}_q^{-1}(N)=\mathrm{det}_q^{-1}(N)x_{ij}(N),&\quad \mathrm{det}_q(N)\mathrm{det}_q^{-1}(N)=\mathrm{det}_q^{-1}(N)\mathrm{det}_q(N)=1,\end{aligned}\end{align*}
where $\mathrm{det}_q(N)$ is the so-called \emph{quantum determinant}. See for instance \cite{UenoTakebayashiShibukawa1}, \cite{NoumiYamadaMimachi} or \cite{KilSch}.

\medskip
It is known that every finite dimensional irreducible left $\mathcal{U}_q(\mathfrak{gl}(N))$-module $V$ has a highest weight vector and its weight becomes of the form $(\omega_1q^{\nu_1/2},\dots,\omega_Nq^{\nu_N/2})$, where $\omega_i\in\{\pm1,\pm\sqrt{-1}\}$ and $\nu=(\nu_1,\dots,\nu_N)\in\mathrm{Sign}_N$, see \cite[Proposition 1.2]{UenoTakebayashiShibukawa2} or \cite[Chapter 7]{KilSch}. Conversely, every weight of this form is a highest weight of some finite dimensional irreducible left $\mathcal{U}_q(\mathfrak{gl}(N))$-module and every finite dimensional left $\mathcal{U}_q(\mathfrak{gl}(N))$-module with highest weight vector must be irreducible. Let $\mathcal{H}_\nu$ be an irreducible left $\mathcal{U}_q(\mathfrak{gl}(N))$-module with the highest weight $(q^{\nu_1/2},q^{\nu_2/2},\dots,q^{\nu_N/2})$ for a signature $\nu\in\mathrm{Sign}_N$. Then we can obtain a concrete basis of finite dimensional irreducible left $\mathcal{U}_q(\mathfrak{gl}(N))$-module $\mathcal{H}_\nu$. In fact, there exists a basis $(v_t)_{t\in\Omega(*,\nu)}$ of $\mathcal{H}_\nu$ whose indices are the paths on the Gelfand--Tsetlin graph from $*$ to $\nu$, and each vectors $v_t$ is a weight vector with the weight \begin{equation}\label{weight}(q^{|r(t_1)|/2},q^{(|r(t_2)|-|r(t_1)|)/2},\dots,q^{(|r(t_N)|-|r(t_{N-1})|)/2}).\end{equation} This basis is called \emph{the Gelfand--Tsetlin basis}. See \cite{UenoTakebayashiShibukawa1}, \cite{UenoTakebayashiShibukawa2}, \cite[Section 7.3]{KilSch} for more details.

\medskip
Let $\mathcal{A}_N$ be the dense $*$-subalgebra generated by the letters $\mathrm{det}_q^{-1}(N)$ and $u_{ij}(N)$ with $i,j=1,\dots,N$, and hence the $C^*$-algebra $A_N$ is the universal enveloping unital $C^*$-algebra generated by $\mathcal{A}_N$. Then this algebra $\mathcal{A}_N$ has a Hopf $*$-algebra structure with the coproduct $\delta_N|_{\mathcal{A}_N}$. We denote this Hopf $*$-algebra by $\mathcal{U}_q(N)$ and call it the \emph{algebraic quantum unitary group}. It is known that there exists a dual pairing between $\mathcal{U}_q(\mathfrak{gl}(N))$ and $\mathcal{U}_q(N)$, that is, \[(\,\cdot\,,\,\cdot\,)\colon\mathcal{U}_q(\mathfrak{gl}(N))\times\mathcal{A}_N\to\mathbb{C},\] see \cite[Proposition 1.3]{NoumiYamadaMimachi}. This pairing naturally induces the representation of the algebra $\mathcal{U}_q(\mathfrak{gl}(N))$ from a right coaction of $\mathcal{U}_q(N)$. Indeed, for any right coaction $\pi\colon V\to V\otimes\mathcal{A}_N$ we obtain the representation $\hat{\pi}\colon\mathcal{U}_q(\mathfrak{gl}(N))\to\mathcal{B}(V)$ defined by $\hat{\pi}(x):=(\mathrm{id}\otimes f_x)\pi$, where $f_x\colon\mathcal{A}_N\to\mathbb{C}$ is defined by $f_x(\,\cdot\,)=(x,\,\cdot\,)$. Thus, (highest) weights and (highest) weight vectors of right $\mathcal{U}_q(N)$-comodules make sense. One of the fundamental facts in the representation theory of $U_q(N)$ is that the (equivalence classes of) irreducible unitary representations are parametrized by signatures in $\mathrm{Sign}_N$, and the irreducible representation $U_\nu$ corresponding to $\nu\in\mathrm{Sign}_N$ has the highest weight $(q^{\nu_1/2},q^{\nu_2/2},\dots,q^{\nu_N/2})$. See \cite[Theorem 2.12, Theorem 3.7]{NoumiYamadaMimachi}. Therefore, using the weights in Equation \eqref{weight} as well as \cite[Theorem 3.7]{NoumiYamadaMimachi}, we can explicitly compute the density matrix $F_\nu$ as \begin{equation}\label{density}[F_\nu]_{tu}=\delta_{t,u}q^{-(N-1)|\nu|+2\sum_{n=1}^{N-1}|r(t_n)|},\end{equation} where $t,u\in\Omega(*,\nu)$ and $t=(t_n)_{n=1}^N$.

\medskip
The unitary quantum group $U_q(N)$ can be regarded as a quantum subgroup of $U_q(N+1)$ with the surjective unital $*$-homomorphism $\theta_N\colon A_{N+1}\to A_N$ defined by \begin{align*}&\theta_N(u_{ij}(N+1)):=\begin{cases}u_{ij}(N) & (1\leq i,j\leq N)\\\delta_{i,j}1&(\text{otherwise}),\end{cases}\\&\theta_N(\mathrm{det}_q^{-1}(N+1)):=\mathrm{det}_q^{-1}(N).\end{align*} Thus, the inductive system of $U_q(N)$ is well defined and denoted by $\mathbb{U}_q$. It is knwon that this inductive system $\mathbb{U}_q$ and the inductive system of $U(N)$ have the same branching rule, that is, the restriction of the irreducible representation corresponding to $\nu=(\nu_1,\dots,\nu_{N+1})\in\mathrm{Sign}_{N+1}$ contains the irreducible representation corresponding to $\mu=(\mu_1,\dots,\mu_N)\in\mathrm{Sign}_N$ if and only if \[\nu_1\geq\mu_1\geq\nu_2\geq\cdots\geq\nu_N\geq\mu_N\geq\nu_{N+1},\] and this rule is independent of the $q$-deformation. See \cite[Section 4]{NoumiYamadaMimachi}. Therefore, the branching graph associated with the inductive system $\mathbb{U}_q$ also becomes the Gelfand--Tsetlin graph $\mathbb{GT}$.

\subsection{Main theorem}
By Equation \eqref{density}, the weighted branching graph associated with $\mathbb{U}_q$ consists of the Gelfand--Tsetlin graph $\mathbb{GT}$ and the weight function $w$ defined by \begin{equation}\label{Def:weight}w((\mu,\nu))=q^{N|\mu|-(N-1)|\nu|},\end{equation} for $\mu\in\mathrm{Sign}_{N-1}$ and $\nu\in\mathrm{Sign}_N$. Recall that the rational Schur polynomial $s_\nu(x_1,\dots,x_N)$ suffixed by a signature $\nu\in\mathrm{Sign}_N$ is defined as \[s_\nu(x_1,\dots,x_N):=\frac{\det\left[x_i^{\nu_j+N-j}\right]_{i,j=1}^N}{\prod_{1\leq i<j\leq N}(x_i-x_j)},\] and satisfies the following branching rule: \[s_\nu(x_1,\dots,x_N)=\sum_{\mu\in\mathrm{Sign}_{N-1}:\mu\prec\nu}x_N^{|\nu|-|\mu|}s_\mu(x_1,\dots,x_{N-1}).\] Thus, we have \[w\mathchar`-\dim(\nu)=\sum_{t\in\Omega(*,\nu)}\prod_{n=1}^Nq^{n|s(t_n)|-(n-1)|r(t_n)|}=s_\nu(q^{N-1},q^{N-3},\dots,q^{-N+1})=\dim_q(\nu).\] See \cite[Section 3]{NoumiYamadaMimachi} for the latter equation. On the other hand, a probability measure $P$ on the paths on the Gelfand--Tsetlin graph $\mathbb{GT}$ is $w$-central if and only if for any signature $\nu$ and any finite path $t$ from $*$ to $\nu$ the probability measure $P$ satisfies that\[P(C_t)=\frac{w(t)}{\dim_q(\nu)}P(X_N=\nu)=\frac{q^{2(|r(t_1)|+\cdots+|r(t_{N-1})|)}}{\sum_{s\in\Omega(*,\nu)}q^{2(|r(s_1)|+\cdots+|r(s_{N-1})|)}}P(X_N=\nu),\] where $X_N$ is the random variable on $(\Omega,P)$ defined by $X_N(\omega)=r(\omega_N)$ for any $\omega=(\omega_n)_{n=1}^\infty\in\Omega$. Therefore, we arrive at the main theorem.

\begin{theorem}\label{main}
There exists an affine homeomorphism between the simplex of quantized characters of the inductive system $\mathbb{U}_q$ and the simplex of $q^2$-central probability measures on the paths on the Gelfand--Tsetlin graph $\mathbb{GT}$. Moreover, this affine homeomorphism is determined by \[\chi|_{C^*(U_q(N))}=\sum_{\nu\in\mathrm{Sign}_N}P(X_n=\nu)\chi_\nu,\] where $\chi_\nu$ is the natural extension of $\mathrm{Tr}(F_\nu\,\cdot\,)/\dim_q(\nu)$ to $C^*(U_q(N))$.
\end{theorem}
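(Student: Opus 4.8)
The plan is to derive Theorem~\ref{main} as a specialization of the general correspondence of Proposition~\ref{P6.2}, fed by the explicit computations \eqref{density} and \eqref{Def:weight}. First I would apply Proposition~\ref{P6.2} to the inductive system $\mathbb{G}=\mathbb{U}_q$. Since its branching graph is the Gelfand--Tsetlin graph $\mathbb{GT}$, that proposition already yields an affine homeomorphism between $\mathrm{Ch}(\mathbb{U}_q)$ and $\mathrm{Cent}(\mathbb{GT},w)$, where $w$ is the canonical weight function arising from the density matrices $F_\nu$, and the decomposition $\chi|_{C^*(U_q(N))}=\sum_{\nu\in\mathrm{Sign}_N}P(X_N=\nu)\chi_\nu$ recorded in the statement is exactly the content of Proposition~\ref{P6.2} in this case, with $\chi_\nu(\,\cdot\,)=\mathrm{Tr}(F_\nu p_\nu(\,\cdot\,))/\dim_q(\nu)$ serving as the announced extension of $\mathrm{Tr}(F_\nu\,\cdot\,)/\dim_q(\nu)$ to $C^*(U_q(N))$.

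The substantive step is then to identify $\mathrm{Cent}(\mathbb{GT},w)$ with Gorin's simplex of $q^2$-central measures. Here I would insert the explicit weight \eqref{Def:weight} together with the identity $w\text{-}\dim(\nu)=\dim_q(\nu)=s_\nu(q^{N-1},q^{N-3},\dots,q^{-N+1})$ into the defining relation of $w$-centrality. As in the paragraph preceding the theorem, the factor $q^{-(N-1)|\nu|}$ common to the path weight $w(t)=[F_\nu]_{tt}$ and the normalization $w\text{-}\dim(\nu)$ cancels, leaving
\[
P(C_t)=\frac{q^{2(|r(t_1)|+\cdots+|r(t_{N-1})|)}}{\sum_{s\in\Omega(*,\nu)}q^{2(|r(s_1)|+\cdots+|r(s_{N-1})|)}}\,P(X_N=\nu),
\]
which I would then confirm is verbatim Gorin's $q$-centrality condition under the reparametrization $q\mapsto q^2$. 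This identifies the two convex sets, and since $\mathrm{Ch}(\mathbb{U}_q)$ inherits its simplex structure and weak${}^*$ topology from the Choquet simplex $\mathrm{KMS}(\mathfrak{A}(\mathbb{U}_q),\widehat{\tau}^{\mathbb{U}_q})$ via Proposition~\ref{Prop:IntegralRepresentation}, the affine homeomorphism of the first step becomes precisely the asserted homeomorphism of simplices.

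The main obstacle lies entirely in this matching: making the comparison with Gorin's definition airtight requires tracking his conventions for the specialization point and the direction of the half-integer weights $q^{\nu_i/2}$. The factor of $2$ in the exponent of \eqref{density}, which is responsible for the shift $q\mapsto q^2$, originates in the highest weights $(q^{\nu_1/2},\dots,q^{\nu_N/2})$ being effectively squared in forming the density matrix $F_\nu$; checking that this squaring reproduces exactly Gorin's $q^2$-weights, and not some other specialization, is where the bookkeeping must be done carefully.
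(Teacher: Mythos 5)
Your proposal is correct and follows essentially the same route as the paper: the paper likewise obtains the theorem by specializing Proposition~\ref{P6.2} to $\mathbb{U}_q$ and then matching the $w$-centrality condition, computed from \eqref{density} and \eqref{Def:weight} together with the Schur-polynomial identity $w$-$\dim(\nu)=s_\nu(q^{N-1},\dots,q^{-N+1})=\dim_q(\nu)$, against Gorin's $q^2$-centrality. The cancellation of the common factor $q^{-(N-1)|\nu|}$ and the resulting exponent $2(|r(t_1)|+\cdots+|r(t_{N-1})|)$ that you highlight is exactly the computation carried out in the paragraph preceding the theorem.
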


\subsection{Quantized characters of the inductive system $\mathbb{U}_q$ of quantum unitary groups}
In this final section, we propose a representation-theoretic interpretation of Gorin's generating functions of probability measures on the sets $\mathrm{Sign}_N$ of signatures. 

Let $P_N$ be a probability measure on $\mathrm{Sign}_N$. Then the generating function $\mathcal{S}(x_1,\dots,x_N;P_N)$ of $P_N$ is defined to be \[\mathcal{S}(x_1,\dots,x_N;P_N):=\sum_{\nu\in\mathrm{Sign}_N}P_N(\{\nu\})\frac{s_\nu(x_1,\dots,x_N)}{s_\nu(1,q^{-2},\dots,q^{-2(N-1)})}.\] It is known that this generating function $\mathcal{S}$ converges uniformly on \[T_N:=\{(x_1,\dots,x_N) : |x_i|=q^{-2(i-1)},i=1,\dots,N\}\] and captures several important properties of $P_N$:
\begin{itemize}
\item A sequence of probability measures $P_N$ on $\mathrm{Sign}_N$ for each $N=1,2,\dots$ becomes a $q$-coherent system if and only if \[\mathcal{S}(x_1,\dots,x_N,q^{-2N};P_{N+1})=\mathcal{S}(x_1,\dots,x_N;P_N)\] for every $N=1,2,\dots$, see \cite[Proposition 4.6]{Gorin:qcentralmeasure}. 
\item A sequence of probability measures $P_N^i$ on $\mathrm{Sign}_N$ converges to a probability measure $P_N$ weakly if and only if the functions $\mathcal{S}(x_1,\dots,x_N;P_N^i)$ uniformly converge to the function $\mathcal{S}(x_1,\dots,x_N;P_N)$ on $T_N$, see \cite[Proposition 4.10, Proposition 4.11]{Gorin:qcentralmeasure}. 
\end{itemize}
Remark that the generating functions $\mathcal{S}$ are main tool in Gorin's analysis of $q$-central probability measures, see \cite{Gorin:qcentralmeasure}.

\medskip
In what follows, we will observe that the restriction of an arbitrary quantized character of $\mathbb{U}_q$ to the maximal torus of each $U_q(N)$ is exactly one of Gorin's generating functions. We think that this observation explains why Gorin's generating functions plays a key role in Gorin's work on $q$-central probability measures.

\medskip
We firstly introduce the quantum group $T^N:=(C(\mathbb{T}^N),\delta_{T^N})$ of the compact group $\mathbb{T}^N$, see \cite[Example 1.1.2]{Neshveyev}. The continuous functions $t_1,\dots,t_N,t_1^{-1},\dots,t_N^{-1}\colon\mathbb{T}^N\to\mathbb{C}$ are defined to be \[t_i(z_1,\dots,z_N):=z_i,\quad t_i^{-1}(z_1,\dots,z_N):=\overline{z_i}\] for any $i=1,\dots,N$. Then the functions $t_1,\dots,t_N,t_1^{-1},\dots,t_N^{-1}$ generate $C(\mathbb{T}^N)$ and we have $\delta_{T^N}(t_i)=t_i\otimes t_i$. Remark that $T^N$ is a quantum subgroup of the quantum unitary group $G_N$ by the surjective $*$-homomorphism $\pi_{T^N}\colon A_N\to C(\mathbb{T}^N)$ defined by \[\pi_{T^N}(u_{ij}(N)):=\delta_{i,j}t_i,\quad \pi_{T^N}(\mathrm{det}_q^{-1}(N)):=t_1^{-1}\cdots t_N^{-1}.\] Furthermore, $T^N$ is a quantum subgroup of the quantum group $T^{N+1}$ by the surjective $*$-homomorphism $\theta_{T^N}\colon C(\mathbb{T}^{N+1})\to C(\mathbb{T}^N)$ defined by \[[\theta_{T^N}f](z_1,\dots,z_N):=f(z_1,\dots,z_N,1)\] for any $f\in C(\mathbb{T}^{N+1})$. Remark that $\theta_{T^N}\circ\pi_{T^{N+1}}=\pi_{T^N}\circ\theta_N$ for any $N\geq1$. 

\medskip
Let \[V_N:=(U_\nu)_{\nu\in\widehat{G}_N}\in \bigoplus_{\nu\in\widehat{G}_N}B(\mathcal{H}_\nu)\otimes A_N\cong W^*(G_N)\otimes A_N.\] Then we have the following theorem.
\begin{theorem}\label{Theorem:rep_qcharacters}
For any $\nu\in\mathrm{Sign}_N$ and the state $\chi_\nu$ on $W^*(U_q(N))$ defined to be $\mathrm{Tr}(F_\nu p_\nu\ \cdot)/\dim_q(\nu)$ (see Section \ref{SVAF}) we have \begin{equation}\label{Eq:10.1}\Phi_{\chi_\nu}^{(N)}(z_1,\dots,z_N):=(\chi_\nu\otimes\pi_{T^N})(V_N)(z_1,\dots,z_N)=\frac{s_\nu(z_1,q^{-2}z_2,\dots,q^{-2(N-1)}z_N)}{s_\nu(1,q^{-2},\dots,q^{-2(N-1)})}.\end{equation} Moreover, for any probability measure $P_N$ on $\mathrm{Sign}_N$ and the character $\chi:=\sum_{\nu\in\widehat{G}_N}P_N(\nu)\chi_\nu$ we also have \begin{equation}\label{Eq:10.2}\Phi_\chi^{(N)}(z_1,\dots,z_N):=(\chi\otimes\pi_{T^N})(V_N)(z_1,\dots,z_N)=\sum_{\nu\in\mathrm{Sign}_N}P_N(\nu)\Phi_\nu(z_1,\dots,z_N).\end{equation} Therefore, we obtain that \[\Phi^{(N)}_\chi(x_1,\dots,x_N)=\mathcal{S}(z_1,q^{-2}z_2\dots,q^{-2(N-1)}z_N;P_N).\] 
\end{theorem}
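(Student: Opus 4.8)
The plan is to establish \eqref{Eq:10.1} by a direct computation in the Gelfand--Tsetlin basis, deduce \eqref{Eq:10.2} by linearity, and then combine the two with the definition of $\mathcal{S}$. First I would note that $V_N = (U_\mu)_{\mu\in\widehat{G}_N}$ and that $\chi_\nu(x) = \mathrm{Tr}(F_\nu p_\nu x)/\dim_q(\nu)$ sees only the $\nu$-block through the projection $p_\nu$; hence $(\chi_\nu\otimes\pi_{T^N})(V_N)$ collapses to $\dim_q(\nu)^{-1}(\mathrm{Tr}(F_\nu\,\cdot\,)\otimes\pi_{T^N})(U_\nu)$. Expanding $U_\nu = \sum_{t,u\in\Omega(*,\nu)} e_{tu}(\nu)\otimes u_{tu}(\nu)$ in the Gelfand--Tsetlin basis, two structural facts make this tractable: the density matrix $F_\nu$ is \emph{diagonal} by \eqref{density}, and the restriction $(\mathrm{id}\otimes\pi_{T^N})(U_\nu)$ is likewise diagonal because each $v_t$ is a weight vector with weight \eqref{weight} and $T^N$ is commutative. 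Thus $\pi_{T^N}(u_{tu}(\nu)) = \delta_{t,u}\chi_t$, where $\chi_t(z_1,\dots,z_N) = z_1^{m_1}\cdots z_N^{m_N}$ with $m_n := |r(t_n)|-|r(t_{n-1})|$ (under the convention $|r(t_0)| := 0$), and the expression reduces to
\[
(\chi_\nu\otimes\pi_{T^N})(U_\nu) = \frac{1}{\dim_q(\nu)}\sum_{t\in\Omega(*,\nu)} [F_\nu]_{tt}\, z_1^{m_1}\cdots z_N^{m_N}.
\]

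The heart of the argument is to identify this sum with a specialized Schur polynomial. I would invoke the combinatorial expansion coming from the iterated branching rule, namely $s_\nu(x_1,\dots,x_N) = \sum_{t\in\Omega(*,\nu)}\prod_{n=1}^N x_n^{m_n}$, and substitute $x_n = q^{-2(n-1)}z_n$. The key bookkeeping step is an Abel summation giving $\sum_{n=1}^N (n-1)m_n = (N-1)|\nu| - \sum_{n=1}^{N-1}|r(t_n)|$, so that the $q$-power produced by the substitution is exactly $q^{-2(N-1)|\nu|+2\sum_{n=1}^{N-1}|r(t_n)|} = q^{-(N-1)|\nu|}[F_\nu]_{tt}$ by \eqref{density}. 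This matches the summands above up to the global factor $q^{-(N-1)|\nu|}$, and I would settle the normalization using the homogeneity $s_\nu(cx_1,\dots,cx_N) = c^{|\nu|}s_\nu(x_1,\dots,x_N)$ (every Gelfand--Tsetlin monomial has total degree $|\nu|$), which yields $\dim_q(\nu) = s_\nu(q^{N-1},\dots,q^{-(N-1)}) = q^{(N-1)|\nu|}s_\nu(1,q^{-2},\dots,q^{-2(N-1)})$. Assembling these proves \eqref{Eq:10.1}.

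Equation \eqref{Eq:10.2} is then immediate from linearity of the map $\chi\mapsto(\chi\otimes\pi_{T^N})(V_N)$, with the interchange of the sum $\sum_\nu P_N(\nu)(\cdots)$ and the functional justified by the uniform convergence of $\mathcal{S}$ on $T_N$ recalled above. Finally, the closing identity follows by substituting \eqref{Eq:10.1} into \eqref{Eq:10.2} and comparing termwise with $\mathcal{S}(x_1,\dots,x_N;P_N) = \sum_\nu P_N(\nu)\,s_\nu(x_1,\dots,x_N)/s_\nu(1,q^{-2},\dots,q^{-2(N-1)})$ evaluated at $x_n = q^{-2(n-1)}z_n$. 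I expect the main obstacle to be purely the exponent bookkeeping in the middle step: correctly matching the power of $q$ coming from $F_\nu$ against the one produced by the specialization $x_n = q^{-2(n-1)}z_n$, together with tracking the factor $q^{(N-1)|\nu|}$ that relates $\dim_q(\nu)$ to $s_\nu(1,q^{-2},\dots,q^{-2(N-1)})$; everything else is linearity and standard convergence.
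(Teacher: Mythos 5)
Your proof is correct, and it takes essentially the same route as the paper: the paper simply cites \cite[Section 3.2]{NoumiYamadaMimachi} for \eqref{Eq:10.1}, and what that reference supplies is exactly the computation you carry out — diagonality of both $F_\nu$ and the torus restriction in the Gelfand--Tsetlin basis, the branching-rule expansion $s_\nu(x_1,\dots,x_N)=\sum_{t\in\Omega(*,\nu)}\prod_n x_n^{|r(t_n)|-|r(t_{n-1})|}$, and the matching of exponents against \eqref{density}. Your Abel-summation identity and the homogeneity argument relating $\dim_q(\nu)$ to $s_\nu(1,q^{-2},\dots,q^{-2(N-1)})$ both check out, so you have in effect filled in the details the paper delegates to the reference.
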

\begin{proof}
Equation \eqref{Eq:10.1} follows from \cite[Section 3.2]{NoumiYamadaMimachi} and Equation \eqref{Eq:10.1} is immediate from Equation \eqref{Eq:10.1}.
\end{proof}

The following corollary is immediate from properties of the generating function $\mathcal{S}$.
\begin{corollary}
For any quantized character $\chi,\chi_n\in\mathrm{Ch}(\mathbb{U}_q),n=1,2,\dots$
\begin{itemize}
\item $\Phi_\chi^{(N+1)}(z_1,\dots,z_N,1)=\Phi_\chi^{(N)}(z_1,\dots,z_N)$ for any $N\geq1$, 
\item if $\chi_n\to\chi$ as $n\to\infty$ in the weak$^*$ topology, then the functions $\Phi_{\chi_n}^{(N)}$ converge to $\Phi_\chi^{(N)}$ uniformly on $\mathbb{T}^N$ as $n\to\infty$ for any $N\geq1$,
\end{itemize}
\end{corollary}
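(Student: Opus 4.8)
The plan is to deduce both statements directly from the identity
\[
\Phi_\chi^{(N)}(z_1,\dots,z_N) = \mathcal{S}(z_1,q^{-2}z_2,\dots,q^{-2(N-1)}z_N;P_N)
\]
furnished by Theorem \ref{Theorem:rep_qcharacters}, where $(P_N)_{N\geq1}$ denotes the $q^2$-coherent system attached to $\chi$ via Theorem \ref{main}, so that $P_N(\nu)=\chi(p_\nu)$ for the block projection $p_\nu$ onto $\mathcal{H}_\nu$ by Lemma \ref{Decom}. The only geometric bookkeeping needed is that the rescaling $(z_1,\dots,z_N)\mapsto(z_1,q^{-2}z_2,\dots,q^{-2(N-1)}z_N)$ is a homeomorphism of $\mathbb{T}^N$ onto the torus $T_N$, so that every assertion about $\mathcal{S}$ on $T_N$ transfers verbatim to $\Phi_\chi^{(N)}$ on $\mathbb{T}^N$.

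For the first bullet I would set $z_{N+1}=1$ in the $(N+1)$-variable form of the identity above, obtaining
\[
\Phi_\chi^{(N+1)}(z_1,\dots,z_N,1) = \mathcal{S}(z_1,q^{-2}z_2,\dots,q^{-2(N-1)}z_N,q^{-2N};P_{N+1}).
\]
Since $(P_N)_N$ is a coherent system, the first recalled property of $\mathcal{S}$ applies with $x_i=q^{-2(i-1)}z_i$ and identifies the right-hand side with $\mathcal{S}(x_1,\dots,x_N;P_N)=\Phi_\chi^{(N)}(z_1,\dots,z_N)$, which is exactly the claimed consistency relation.

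For the second bullet, the one point requiring care is the passage from weak$^*$ convergence of the characters to weak convergence of their marginals. Each block projection $p_\nu$ is a genuine fixed element of the AF-algebra $\mathfrak{A}(\mathbb{U}_q)$, so $\chi_n\to\chi$ weak$^*$ forces $P_N^n(\nu)=\chi_n(p_\nu)\to\chi(p_\nu)=P_N(\nu)$ for every $\nu\in\mathrm{Sign}_N$; on the countable discrete set $\mathrm{Sign}_N$ this pointwise convergence of weights is precisely weak convergence $P_N^n\to P_N$. The second recalled property of $\mathcal{S}$ then yields that $\mathcal{S}(\,\cdot\,;P_N^n)$ converges to $\mathcal{S}(\,\cdot\,;P_N)$ uniformly on $T_N$, and under the rescaling above this is exactly uniform convergence of $\Phi_{\chi_n}^{(N)}$ to $\Phi_\chi^{(N)}$ on $\mathbb{T}^N$. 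No step is genuinely hard here; the only things to watch are keeping the substitution $x_i=q^{-2(i-1)}z_i$ consistent so that Gorin's torus $T_N$ matches $\mathbb{T}^N$, and recognizing that the spectral projections $p_\nu$ lie in $\mathfrak{A}(\mathbb{U}_q)$ so that weak$^*$ convergence may legitimately be tested against them.
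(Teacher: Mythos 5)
Your proposal is correct and follows exactly the route the paper intends: the paper dispenses with the proof by declaring the corollary ``immediate from properties of the generating function $\mathcal{S}$,'' and your argument simply spells out those two applications (coherence of $(P_N)_N$ for the first bullet, and testing weak$^*$ convergence against the block projections $p_\nu\in C^*(U_q(N))\subset\mathfrak{A}(\mathbb{U}_q)$ to get weak convergence of marginals, hence uniform convergence of $\mathcal{S}$ on $T_N$, for the second) together with the harmless rescaling identifying $\mathbb{T}^N$ with $T_N$. No discrepancy with the paper's reasoning.
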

By the former property, we obtain the function $\Phi_\chi$ on \[T:=\{(z_1,\dots,z_N,1,\dots):z_i\in\mathbb{T},i=1,\dots,N,N\geq1\}\cong\lim_\to\mathbb{T}^N\] defined by $\Phi_\chi(z_1,\dots,z_N,1,\dots):=\Phi_\chi^{(N)}(z_1,\dots,z_N)$. Furthermore, by these properties and the ergodic method (Corollary \ref{Cor6.2}), we have the following proposition:

\begin{proposition}
For any extremal quantized character $\chi\in\mathrm{ex}(\mathrm{Ch}(\mathbb{U}_q))$, there exists a sequence $\nu(1)\prec\nu(2)\prec\cdots$ of signatures such that \begin{align*}\Phi_\chi(z_1,\dots,z_N,1,\dots)&=\lim_{L\to\infty,N\leq L}\Phi_{\chi_{\nu(L)}}^{(L)}(z_1,\dots,z_N,1\dots,1)\\&=\lim_{L\to\infty,N\leq L}\frac{s_{\nu(L)}(z_1,q^{-2}z_2,\dots,q^{-2(N-1)}z_N,q^{-2N},\dots,q^{-2(L-1)})}{s_{\nu(L)}(1,q^{-2},\dots,q^{-2(L-1)})}\end{align*} for any $(z_1,\dots,z_N,1,\dots)\in T$ and any $N\geq1$.
\end{proposition}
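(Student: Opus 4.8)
The plan is to deduce the statement from the ergodic method (Corollary \ref{Cor6.2}, equivalently Theorem \ref{theorem:ergodicmethod}) together with the explicit torus formula of Theorem \ref{Theorem:rep_qcharacters}. The second displayed equality requires no argument: it is just Theorem \ref{Theorem:rep_qcharacters} for $\chi_{\nu(L)}$ with $\nu(L)\in\mathrm{Sign}_L$, evaluated at the torus point $(z_1,\dots,z_N,1,\dots,1)$, so that the trailing variables $z_{N+1}=\cdots=z_L=1$ produce the specialization $q^{-2N},\dots,q^{-2(L-1)}$ inside the Schur polynomial. Hence everything reduces to the first equality, i.e.\ to $\Phi_\chi^{(N)}(z_1,\dots,z_N)=\lim_{L}\Phi_{\chi_{\nu(L)}}^{(L)}(z_1,\dots,z_N,1,\dots,1)$.

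First I would let $(P_N)_N$ be the $w$-coherent system attached to the extremal character $\chi$ via Proposition \ref{P6.2} and Theorem \ref{main}; by Theorem \ref{Theorem:Ergodic_ExtremalKMS} the corresponding $w$-central measure is $\mathfrak{S}(\Omega)$-ergodic, so Theorem \ref{theorem:ergodicmethod} supplies a path $\omega$ — equivalently a chain $\nu(1)\prec\nu(2)\prec\cdots$ with $\nu(L)=X_L(\omega)$ — along which the relative weighted dimensions converge. I then set
\[
P_N^L(\mu):=w\text{-}\dim(\mu)\,\frac{w\text{-}\dim(\mu,\nu(L))}{w\text{-}\dim(\nu(L))},\qquad \mu\in\mathrm{Sign}_N,
\]
which is the level-$N$ coherent measure of $\chi_{\nu(L)}\circ\Theta_{L,N}$, i.e.\ the push-forward to level $N$ of the point mass at $\nu(L)$; this is manifestly a probability measure. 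By the ergodic method $P_N^L(\mu)\to P_N(\mu)$ for each $\mu$, and since these are all probability measures on the countable set $\mathrm{Sign}_N$, the Scheffé-type observation already used in the proof of Corollary \ref{Cor6.2} upgrades this to total-variation convergence $\sum_\mu|P_N^L(\mu)-P_N(\mu)|\to 0$.

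The key step is to identify the truncated Schur ratio with the generating function of $P_N^L$:
\[
\Phi_{\chi_{\nu(L)}}^{(L)}(z_1,\dots,z_N,1,\dots,1)=\sum_{\mu\in\mathrm{Sign}_N}P_N^L(\mu)\,\frac{s_\mu(z_1,q^{-2}z_2,\dots,q^{-2(N-1)}z_N)}{s_\mu(1,q^{-2},\dots,q^{-2(N-1)})}=\sum_{\mu\in\mathrm{Sign}_N}P_N^L(\mu)\,\Phi_{\chi_\mu}^{(N)}(z_1,\dots,z_N).
\]
I would obtain this by iterating the Schur branching rule $s_\nu(x_1,\dots,x_M)=\sum_{\lambda\prec\nu}x_M^{|\nu|-|\lambda|}s_\lambda(x_1,\dots,x_{M-1})$ from level $L$ down to level $N$ after the substitution $x_i=q^{-2(i-1)}z_i$ for $i\le N$ and $x_i=q^{-2(i-1)}$ for $i>N$. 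The coefficient of $s_\mu$ then becomes a sum over chains $\mu=\lambda^N\prec\cdots\prec\lambda^L=\nu(L)$ of $\prod_{k=N+1}^L(q^{-2(k-1)})^{|\lambda^k|-|\lambda^{k-1}|}$; comparing its telescoping exponent with that of $\prod_k w((\lambda^{k-1},\lambda^k))$ coming from the weight \eqref{Def:weight}, and using $w\text{-}\dim(\nu)=\dim_q(\nu)=q^{(M-1)|\nu|}s_\nu(1,q^{-2},\dots,q^{-2(M-1)})$, the stray $q$-powers cancel and the coefficient collapses exactly to $P_N^L(\mu)$. This bookkeeping — reconciling the weight \eqref{Def:weight} with the principal specialization of Schur polynomials — is the main obstacle, though it is a routine telescoping. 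Alternatively one may bypass the combinatorics entirely by invoking Proposition \ref{P6.2} (which identifies $P_N^L$ as the level-$N$ datum of $\chi_{\nu(L)}\circ\Theta_{L,N}$), Equation \eqref{Eq:10.2} of Theorem \ref{Theorem:rep_qcharacters}, and the compatibility $\theta_{T^N}\circ\pi_{T^{N+1}}=\pi_{T^N}\circ\theta_N$, which turns the act of setting the tail torus variables to $1$ into the restriction $\Theta_{L,N}$.

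Finally I would pass to the limit $L\to\infty$. Since each $\Phi_{\chi_\mu}^{(N)}(z)$ is the state $\chi_\mu$ applied to the unitary $(\mathrm{id}\otimes\pi_{T^N})(U_\mu)$ evaluated at $z$, it satisfies $|\Phi_{\chi_\mu}^{(N)}(z)|\le1$ for all $z\in\mathbb{T}^N$, so the total-variation convergence of $P_N^L$ gives
\[
\Bigl|\Phi_{\chi_{\nu(L)}}^{(L)}(z_1,\dots,z_N,1,\dots,1)-\sum_{\mu}P_N(\mu)\,\Phi_{\chi_\mu}^{(N)}(z)\Bigr|\le\sum_{\mu}|P_N^L(\mu)-P_N(\mu)|\xrightarrow[L\to\infty]{}0
\]
uniformly in $z\in\mathbb{T}^N$. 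The right-hand limit equals $\Phi_\chi^{(N)}(z_1,\dots,z_N)=\Phi_\chi(z_1,\dots,z_N,1,\dots)$ by Equation \eqref{Eq:10.2} and the definition of $\Phi_\chi$ on $T$, which is precisely the first equality and completes the proof.
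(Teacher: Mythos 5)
Your proposal is correct and follows essentially the route the paper intends: the paper derives this proposition in one line from the ergodic method (Corollary \ref{Cor6.2}) together with the properties of the generating functions established just before it, and your argument is a faithful, fully detailed elaboration of exactly that — identifying $\Phi_{\chi_{\nu(L)}}^{(L)}(z_1,\dots,z_N,1,\dots,1)$ with the generating function of the level-$N$ push-forward $P_N^L$ via the compatibility $\theta_{T^N}\circ\pi_{T^{N+1}}=\pi_{T^N}\circ\theta_N$ (or the Schur branching rule), and then passing to the limit using the total-variation convergence supplied by the ergodic method.
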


Remark that we can describe an explicit formula of $\Phi_\chi$ for any extremal quantized character of the inductive system $\mathbb{U}_q$ by \cite[Theorem 6.5]{GorinPanova}. In this section we have considered, in a more direct fashion, the generating functions $\mathcal{S}$ of probability measures on the set of signatures defined by Gorin in \cite{Gorin:qcentralmeasure} in relation to quantized characters of the inductive system $\mathbb{U}_q$. We hope that the consideration here is also useful for the representation theory of the $\sigma$-$C^*$-quantum group $U_q(\infty)$ introduced in \cite{MahantaMathai}.

\appendix
\section{Boundary theorem}
In this appendix we briefly explain of the boundary theorem of the $q$-Gelfand--Tsetlin graph below as an application of the ergodic method. Let $\mathcal{N}:=\{\theta=(\theta_i)_{i=1}^\infty\in\prod_{i=1}^\infty\mathbb{Z}:\theta_1\leq\theta_2\leq\cdots\}$ endowed with the topology of component-wise convergence. 

\begin{theorem}\label{BoundaryTheorem}
There exists a homeomorphism from the set of extremal quantized characters $\mathrm{ex}(\mathrm{Ch}(\mathbb{U}_q))$ of the inductive system $\mathbb{U}_q$ to the set $\mathcal{N}$.
\end{theorem}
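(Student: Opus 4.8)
The plan is to deduce Theorem \ref{BoundaryTheorem} by combining the main theorem with the ergodic method of Section \ref{chap:DynamicalSystems}, thereby reducing it to an asymptotic analysis of normalized ($q^2$-deformed) dimensions that is supplied by Gorin's work. First I would invoke Theorem \ref{main} to replace $\mathrm{ex}(\mathrm{Ch}(\mathbb{U}_q))$ by the set of extremal $q^2$-central probability measures on $\mathbb{GT}$; by Theorem \ref{Theorem:Ergodic_ExtremalKMS} these coincide with the $\mathfrak{S}(\Omega)$-ergodic ones, and by the ergodic method (Theorem \ref{theorem:ergodicmethod} and Corollary \ref{Cor6.2}) each such measure $P$ is realized along a sequence of signatures $\nu(1)\prec\nu(2)\prec\cdots$ via
\[
\frac{P(X_K=\mu)}{w\text{-}\dim(\mu)} = \lim_{N\to\infty}\frac{w\text{-}\dim(\mu,\nu(N))}{w\text{-}\dim(\nu(N))}, \qquad \mu\in\mathrm{Sign}_K,
\]
where $w\text{-}\dim(\nu)=\dim_q(\nu)$ as computed in Section 3.2.

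Next I would define the candidate parametrizing map. Since $q\in(0,1)$, the weight $w((\mu,\nu))=q^{N|\mu|-(N-1)|\nu|}$ from \eqref{Def:weight} concentrates mass toward the bottom of the signatures, so I set $\theta_i:=\lim_{N}\nu(N)_{N+1-i}$. Reading each signature from the bottom ($\nu(N)_N\le\nu(N)_{N-1}\le\cdots$) this should yield a weakly increasing integer sequence, i.e.\ a point of $\mathcal{N}$. The content to verify is that for an extremal $P$ these bottom coordinates genuinely stabilize and that the limit displayed above depends only on $\theta$, not on the particular approximating sequence.

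The hard part will be the asymptotic analysis of $w\text{-}\dim(\mu,\nu(N))/\dim_q(\nu(N))$, equivalently of the normalized Schur functions $s_{\nu(N)}(z_1,q^{-2}z_2,\dots)/s_{\nu(N)}(1,q^{-2},\dots)$ appearing in Theorem \ref{Theorem:rep_qcharacters}, as $N\to\infty$. This is precisely Gorin's boundary analysis for the $q$-Gelfand--Tsetlin graph, and I would import it from \cite{Gorin:qcentralmeasure}, using the Schur-function asymptotics of \cite[Theorem 6.5]{GorinPanova}: the geometric $q$-decay of the specialization localizes the limit to the finitely many bottom coordinates, collapsing the boundary from the rich Voiculescu-type data of the $q=1$ case to the discrete set $\mathcal{N}$. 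This simultaneously shows that the limit exists exactly when $\theta\in\mathcal{N}$ is well defined, that distinct $\theta$ give distinct extremal measures, and that every $\theta\in\mathcal{N}$ is attained (e.g.\ by the admissible approximating sequence $\nu(N)=(\theta_N,\theta_{N-1},\dots,\theta_1)$), yielding a bijection $\mathrm{ex}(\mathrm{Ch}(\mathbb{U}_q))\to\mathcal{N}$.

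Finally I would check that the bijection is a homeomorphism. Weak$^*$ convergence of extremal quantized characters corresponds, through Theorem \ref{main} and Proposition \ref{P6.1}, to component-wise weak convergence of the associated $q^2$-coherent systems, and by the generating-function criterion recalled before Theorem \ref{Theorem:rep_qcharacters} (uniform convergence of $\mathcal{S}$ on each $T_N$) this matches component-wise convergence of the $\theta_i$; continuity of the inverse follows from the explicit dependence on $\theta$ of the limiting character formula furnished by \cite[Theorem 6.5]{GorinPanova}. The only genuinely analytic ingredient is this Schur-function asymptotics, so the appendix can present Theorem \ref{BoundaryTheorem} as a clean corollary of the ergodic method once that ingredient is granted.
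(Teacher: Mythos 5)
Your proposal is correct and follows essentially the same route as the paper: the appendix likewise obtains the approximating sequence $\nu^{(1)}\prec\nu^{(2)}\prec\cdots$ from the ergodic method (Corollary \ref{Cor6.2}), defines $\theta_i:=\lim_{N\to\infty}\nu^{(N)}_{N-i+1}$, and imports the stability of the bottom coordinates, the well-definedness, and the converse correspondence with its topology from Gorin's boundary analysis in \cite{Gorin:qcentralmeasure}. Your extra detail on the generating-function criterion for the homeomorphism is consistent with what the paper delegates to that reference.
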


The correspondence from $\mathrm{ex}(\mathrm{Ch}(\mathbb{U}_q))$ to $\mathcal{N}$ is immediately obtained by the ergodic method. Indeed, by the ergodic method (Corollary \ref{Cor6.2}), for any extremal quantized character $\chi\in\mathrm{ex}(\mathrm{Ch}(\mathbb{U}_q))$ there exists a sequence $\nu^{(1)}\prec\nu^{(2)}\prec\cdots$ such that $\chi|_{C^*(U_q(K))}=\lim_{N\to\infty,K\leq N}\chi_{\nu^{(N)}}\circ\Theta_{N,K}$. Then, using \cite[Theorem 1.3.1 (or Theorem 5.1)]{Gorin:qcentralmeasure}, we can conclude that this sequence $(\nu^{(N)})_{N=1}^\infty$ is stable, that is, the number $\nu^{(N)}_{N-i+1}$ converges as $N\to\infty$ for any $i\geq1$. Therefore, we obtain the corresponding parameter $(\theta_i)_{i=1}^\infty$ by $\theta_i:=\lim_{N\to\infty}\nu^{(N)}_{N-i+1}$. Remark that this parameter does not depend on the sequence $(\nu^{(N)})_{N=1}^\infty$ that we have chosen and depends only on the given extremal quantized character $\chi$. See \cite{Gorin:qcentralmeasure} for more details as well as the converse correspondence.

\begin{remark}
A finer classification of extremal quantized characters of the inductive system $\mathbb{U}_q$ seems possible by using the Murray--von Neumann--Connes type classification for von Neumann factors. In fact, we have obtained the following: Let $\chi$ be the extremal quantized character corresponding $\theta=(\theta_i)_{i=1}^\infty\in\mathcal{N}$. Then the GNS representation $\varrho_\chi$ of the Stratila--Voiculescu AF-algebra $\mathfrak{A}(\mathbb{U}_q)$ for $\chi$ is type I, that is, the von Neumann factor $\varrho_\chi(\mathfrak{A}(\mathbb{U}_q))''$ is type I, if and only if the parameter $\theta=(\theta_i)_{i=1}^\infty$ is bounded. The details on this classification of other cases will be discussed elsewhere.
\end{remark}

\section{$(q,t)$-Central probability measures}
Cuenca \cite{Cuenca} introduced the concept of $(q,t)$-central probability measures on the paths on the Gelfand--Tsetlin graph $\mathbb{GT}$. However, we do not know at the present moment whether or not these $(q,t)$-central probability measures come from some quantum group deformation of $U(N)$. This is indeed an interesting question. Nevertheless, the $(q,t)$-central probability measures can still be dealt with in our framework without appealing quantum groups. Namely, in this appendix, we will introduce the corresponding $(q,t)$-analogue $F_\nu(q,t)$ of the density matrices $F_\nu$ in Equation \eqref{density} (by slightly changing parameters for convenience), and this $(q,t)$-analogue gives a new Stratila--Voiculescu AF-flow successfully on the same Stratila--Voiculescu AF-algebra as that of $\mathbb{U}_q$.

Cuenca's idea to introduce the concept of $(q,t)$-central probability measures is replacing the quantum dimension $\dim_q(\nu)=S_\nu(q^{N-1},q^{N-3},\dots,q^{-N+1})$ of the signature $\nu\in\mathrm{Sign}_N$ with \[P_\nu(t^{N-1},t^{N-3},\dots,t^{-N+1};q,t^2),\] where $P_\nu$ is the (rational) Macdonald polynomial assigned to $\nu$, and $q,t\in (0,1)$ are the Macdonald parameters. Using the branching rule for the Macdonald polynomials (see \cite{Macdonald:book} or \cite[Theorem 2.5]{Cuenca}), we can interpret these values $P_\nu(t^{N-1},t^{N-3},\dots,t^{-N+1};q,t^2)$ as the weighted dimensions of the vertices $\nu$ of the Gelfand--Tsetlin graph with the weight function $w_{q,t}$ defined by \[w_{q,t}((\mu,\nu)):=\psi_{\nu/\mu}(q,t^2)t^{N|\mu|-(N-1)|\nu|},\quad \mathrm{Sign}_{N-1}\ni\mu\prec\nu\in\mathrm{Sign}_N,\] where $\psi_{\nu/\mu}(q,t^2)$ is the branching coefficient (see \cite{Macdonald:book} or \cite[Theorem 2.5]{Cuenca} for its explicit form). Moreover, any $w_{q,t}$-central probability measure $M$ satisfies that for any finite path $u$ from $*$ to $\nu\in\mathrm{Sign}_N$ \begin{align*}M(C_u)&=\frac{w_{q,t}(u)}{P(t^{N-1},t^{N-3},\dots,t^{-N+1};q,t^2)}M(X_N=\nu)\\&=t^{2(|r(u_1)|+\cdots+|r(u_{N-1})|)}\frac{\psi_{r(u_1)/r(u_0)}(q,t^2)\cdots\psi_{r(u_N)/r(u_{N-1})}(q,t^2)}{P_\nu(1,t^2,\dots,t^{2(N-1)};q,t^2)}M(X_N=\nu),\end{align*} where the $\mathrm{Sign}_N$-valued random variable $X_N$ is defined by $X_N(\omega)=r(\omega_N)$ for any $\omega\in\Omega$. This is nothing but a $(q,t^2)$-central probability measure (see \cite[Definition 6.4]{Cuenca}). 

With these preparations, by Formula \eqref{Eq:R6.1}, we define the $(q,t)$-analogue $F_\nu(q,t)$ to be \[F_\nu(q,t)=\sum_{u\in\Omega(*,\nu)}w_{q,t}(u)S_uS_u^*,\]that is, its matrix representation becomes \[[F_\nu(q,t)]_{uv}=\delta_{u,v}\prod_{k=1}^N\psi_{r(u_k)/s(u_k)}(q,t^2)t^{K|\mu|-(K-1)|\nu|},\quad u,v\in\Omega(*,\nu).\] Using the branching rule for Macdonald polynomials, we can confirm that \[\mathrm{Tr}(F_\nu(q,t))=P_\nu(t^{N-1},t^{N-3},\dots,t^{-N+1};q,t^2).\] Moreover, we can construct the Stratila--Voiculescu AF-flow form the $F_\nu(q,t)$ successfully on the same Stratila--Voiculescu AF-algebra as in the case of $\mathbb{U}_q$. Then we can easily confirm that Theorem \ref{Theorem:Ergodic_ExtremalKMS} and Theorem \ref{theorem:ergodicmethod} still hold in this case too. In particular, the extremity for $(q,t^2)$-central probability measures coincides with the ergodicity for $(q,t^2)$-central probability measures as in Section 2.5. It may be important to notice that the consideration here suggests that any Stratila--Voiculescu AF-flow on the same Stratila--Voiculescu AF-algebra as that of $\mathbb{U}_q$ (or equivalently that $U(\infty)$) can be interpreted as a possible deformation of the Gelfand--Tsetlin graph in the algebraic level. On the other hand, according to the consideration here, we can regard a generating function of extremal $(q,t^2)$-central probability measure (see Theorem 5.5 (or Section 6) in \cite{Cuenca}) as \emph{$(q,t)$-analogue of Voiculescu function}.

\section*{Acknowledgment}
The author gratefully acknowledges the passionate guidance and continuous encouragement from his supervisor, Professor Yoshimichi Ueda. The author thanks Professor Yuki Arano for his useful comments on quantum subgroups and Section 3.3 of this paper, and for letting the author know the work \cite{MeyerRoyWoronowicz}. The author also thanks Professor Reiji Tomatsu and Professor Makoto Yamashita for their comments on this paper. In particular, Professor Reiji Tomatsu had the author be aware of an inaccuracy in the author's understanding of Theorem 1.3.1 in \cite{Gorin:qcentralmeasure} and informed the author of the reference \cite{Tomatsu}. Finally, the author appreciates the referee for careful reading and useful comments. 
}

\end{document}